\theoremstyle{plain}
\newtheorem{mythm}{Theorem}[]
\newtheorem*{mydef}{Definition}
\newtheorem{theorem}{Theorem}[section]
\newtheorem{lemma}[theorem]{Lemma}
\newtheorem{proposition}[theorem]{Proposition}
\newtheorem{corollary}[theorem]{Corollary}
\newtheorem{definition}[theorem]{Definition} \theoremstyle{definition}
\newtheorem{example}[theorem]{Example}
\newtheorem{remark}[theorem]{Remark}
\newtheorem*{thm}{Theorem}
\newcommand{\lie}[1]{\mathfrak{#1}}
\newcommand{\R}{\mathbb{R}} 
\newcommand{\Z}{\mathbb{Z}}
\newcommand{\inv}{^{-1}}
\newcommand{\N}{\mathbb{N}}
\newcommand{\mx}{\mathfrak{X}}
\newcommand{\dr}{\mathbf{d}}
\newcommand{\ldr}[1]{{{\pounds}}_{#1}}
\newcommand{\E}{\underline{E}}
\DeclareMathOperator{\rank}{rank}
\DeclareMathOperator{\Hom}{Hom}
\DeclareMathOperator{\End}{End}
\DeclareMathOperator{\Id}{Id}
\DeclareMathOperator{\Pont}{Pont}
\DeclareMathOperator{\tr}{tr}
\DeclareMathOperator{\gtr}{gtr}
\DeclareMathOperator{\ad}{ad}
\begin{document}

\title{Obstructions to representations up to homotopy and ideals}


\author{M.~Jotz Lean}
\address{Mathematisches Institut, Georg-August Universit\"at G\"ottingen.}
\email{madeleine.jotz-lean@mathematik.uni-goettingen.de}

\begin{abstract}
  This paper considers the Pontryagin characters of graded vector
  bundles of finite rank, in the cohomology vector spaces of a Lie
  algebroid over the same base.  These Pontryagin characters vanish if
  the graded vector bundle carries a representation up to homotopy of
  the Lie algebroid. As a consequence, this gives a strong obstruction
  to the existence of a representation up to homotopy on a graded
  vector bundle of finite rank. In particular, if a graded vector
  bundle $E[0]\oplus F[1]\to M$ carries a $2$-term representation up
  to homotopy of a Lie algebroid $A\to M$, then all the (classical)
  $A$-Pontryagin classes of $E$ and $F$ must coincide.

  This paper generalises as well Bott's vanishing theorem to the
  setting of Lie algebroid representations (up to homotopy) on
  arbitrary vector bundles.  As an application, the main theorems
  induce new obstructions to the existence of infinitesimal ideal
  systems in a given Lie algebroid.

  \medskip
  
  \textbf{Keywords:} Lie algebroids, representations up to homotopy,
  connections up to homotopy, Pontryagin classes, graded vector
  bundles, Bott vanishing theorem, infinitesimal ideal systems,
  fibrations of Lie algebroids.  \medskip

\textbf{MSC2010:}   Primary: 53B05, 
    57R20, 
    57R22. 
    Secondary: 53D17 
 
\end{abstract}

\maketitle\date{\today}

\tableofcontents

\section{Introduction}
Representations up to homotopy of Lie algebroids were found by Arias
Abad and Crainic \cite{ArCr12} to be a convenient geometric setting
for defining the adjoint representation of a Lie algebroid.  They
showed in \cite{ArCr11} that the \emph{adjoint representation up to
  homotopy} is the right notion of adjoint of a Lie algebroid since it
can be used to define its Weil algebra. The precursor notion of
\emph{strong homotopy representation} could be found already much
earlier in \cite{Stasheff88}, in the context of constrained Poisson
algebras -- incidentally, in the study of \emph{ideals} in constrained
Poisson algebras.  Further, $2$-term representations up to homotopy
are \emph{super-representations} in the sense of Quillen
\cite{Quillen85}.

Gracia-Saz and Mehta found in \cite{GrMe10a} that these
$2$-representations are equivalent to splittings of
VB-algebroids. This latter insight in particular led in the last ten
years to advances in the study of VB-algebroids with an additional
geometric structure -- see \cite{CaBrOr18}, \cite{Jotz18a,Jotz19a},
\cite{Jotz18b,Jotz18d}, \cite{GrJoMaMe18}, \cite{JoOr14},
\cite{DrJoOr15}, \cite{ShZh11} among others.  Representations up to
homotopy, in particular of $2$-representations, were further richly
studied in e.g.~\cite{ArSc13}, \cite{BrOr19}, \cite{Mehta15},
\cite{TrZh16}, \cite{ArCrDh11}, \cite{Jotz19b}, \cite{ArSc11}.

\medskip

\subsection*{Obstructions to the existence of $n$-representations}
Let us recall the definition of an \emph{$n$-term representation up to
  homotopy} \cite{ArCr12}, also called \emph{flat superconnection} in
\cite{GrMe10a}, but named here \emph{$n$-representation} for short.
\begin{mydef}[\cite{ArCr12,GrMe10a}]
  Let $A\to M$ be a Lie algebroid. Then an \textbf{$n$-representation}
  of $A$ is a graded vector bundle
  $\E=E_0[0]\oplus\ldots\oplus E_{n-1}[n-1]\to M$ with an operator
  \[\mathcal D\colon \Omega(A,\E)_\bullet\to\Omega(A,\E)_{\bullet+1}
  \]
  that increases the total degree by $1$ and satisfies $\mathcal D^2=0$ as well as 
  \begin{equation}\label{leibniz_n_conn}
    \mathcal D(\omega\wedge \eta)=\dr_A\omega\wedge \eta+ (-1)^{l}\omega\wedge\mathcal D\eta
    \end{equation}
  for $\omega\in\Omega^l(A)$ and
  $\eta\in\Omega(A,\E)_{\bullet}$.
\end{mydef}

An \textbf{$n$-connection} (or \emph{$n$-term connection up to
  homotopy}) \textbf{of a Lie algebroid $A$ on a graded vector bundle
  $\E=E_0[0]\oplus\ldots\oplus E_{n-1}[n-1]\to M$} is defined to be an
operator $\mathcal D$ as in the definition above, but without the
condition $\mathcal D^2=0$, see e.g.~\cite{Quillen85,GrMe10a,Mehta14}.

\medskip

The $A$-Pontryagin classes of a vector bundle $E$ measure ``the
failure of $E$ to have a flat $A$-connection'' -- or in other words to
carry a representation of $A$. Therefore, it is natural to ask if
there are \emph{characteristic classes of a graded vector bundle}
$\E=E_0[0]\oplus\ldots\oplus E_{n-1}[n-1]\to M$ that measure its
failure to carry an $n$-representation of a Lie algebroid $A$.

This paper explores the fact that the Chern-Weil construction of
Pontryagin characters carries over almost word by word to the setting
of $n$-connections, if the \emph{graded trace} on
$\underline{\End}(\E)$ replaces the trace on endomorphisms of an
ordinary vector bundle \cite{Quillen85,Mehta14}.  In short, given an
$n$-connection, its curvature $\mathcal D^2$ is (graded)
$\Omega^\bullet(A)$-linear and ``equals'' a form
$R_{\mathcal D}\in \Omega(A,\underline{\End}(\E))_\bullet$ of total
degree $2$. The graded trace $\widehat{\gtr}(R_{\mathcal D}^l)$ of the
$l$-th power of this form is just an element of $\Omega^{2l}(A)$, with
$\dr_A\left(\widehat{\gtr}(R_{\mathcal D}^l)\right)=0$, hence defining
a cohomology class
\[\left[\widehat{\gtr}(R_{\mathcal D}^l)\right]\in H^{2l}(A),
\]
called here the \textbf{$l$-th Pontryagin character of the graded
  vector bundle}.  These classes, for $l\geq 1$, do not depend on the
choice of the $n$-connection on $\E$, and they generate together the
\textbf{$A$-Pontryagin algebra of the graded vector bundle $\E$}, as
an $\R$-subalgebra of $H^\bullet(A)$.  Obviously the $A$-Pontryagin
algebra of $\E$ vanishes if $\E$ carries an $n$-representation of $A$.

A connection $\nabla\colon\Gamma(A)\times\Gamma(\E)\to\Gamma(\E)$ that
preserves the grading is an example of an $n$-connection of $A$ on
$\E$. Therefore the generators above of $\Pont_A^\bullet(\E)$ are
alternating sums of the classical Pontryagin
characters of the terms $E_i$ of $E$, $i=0, \ldots, n-1$. This
immediately yields the following theorem, which seems to have been
overlooked so far in the literature.

\begin{mythm}
  Let $\E=E_0[0]\oplus\ldots\oplus E_{n-1}[n-1]$ be a graded vector
  bundle over a smooth manifold $M$, and let $A\to M$ be a Lie
  algebroid.  If there exists an $n$-representation $\mathcal D$ of
  $A$ on $\E$, then the Pontryagin characters $\sigma_A^l(E_i)$,
  $l>1$, of the vector bundles $E_i$, $i=0,\ldots,n-1$, satisfy the
  equations
        \begin{equation}\label{gen_n_rep_case}
          \sum_{i=0}^{n-1}(-1)^i\sigma^l_A(E_i)=0\in H^{2l}(A)
        \end{equation}
        for all $l>1$.
      \end{mythm}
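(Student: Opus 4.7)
The plan is to exploit the independence of the Pontryagin characters on the choice of $n$-connection, comparing two extreme situations: one where the $n$-connection is the given $n$-representation (so its curvature vanishes identically), and one where it is a grading-preserving ordinary $A$-connection (so its graded trace decomposes into ordinary traces on each $E_i$).

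First, I would recall that for any $n$-connection $\mathcal D$ on $\E$ the classes $[\widehat{\gtr}(R_{\mathcal D}^l)]\in H^{2l}(A)$ depend only on $\E$ and not on $\mathcal D$, as stated in the excerpt. If $\mathcal D$ is in addition an $n$-representation, then $\mathcal D^2=0$, so the curvature form $R_{\mathcal D}\in\Omega(A,\underline{\End}(\E))_\bullet$ vanishes. Hence all powers $R_{\mathcal D}^l$ vanish for $l\geq 1$, and in particular
\[
\left[\widehat{\gtr}(R_{\mathcal D}^l)\right]=0\in H^{2l}(A)
\]
for every $l\geq 1$.

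Next, I would compute the very same class using a different $n$-connection $\mathcal D'$, namely an ordinary $A$-connection $\nabla=\nabla_0\oplus\cdots\oplus\nabla_{n-1}$ that preserves the grading, where each $\nabla_i$ is an $A$-connection on $E_i$. The excerpt explicitly says that such a $\nabla$ is an example of an $n$-connection. Its curvature $R_\nabla$ is then block-diagonal with blocks $R_{\nabla_i}\in\Omega^2(A,\End(E_i))$, and consequently the same is true of every power $R_\nabla^l$. By the very definition of the graded trace on a $\mathbb{Z}$-graded endomorphism bundle, the graded trace of a block-diagonal endomorphism is the alternating sum of the ordinary traces on each graded piece, so
\[
\widehat{\gtr}(R_\nabla^l)=\sum_{i=0}^{n-1}(-1)^i\tr\bigl(R_{\nabla_i}^l\bigr)\in\Omega^{2l}(A).
\]
Passing to cohomology and recognising $[\tr(R_{\nabla_i}^l)]$ as the classical $l$-th $A$-Pontryagin character $\sigma_A^l(E_i)$ of $E_i$, this gives $[\widehat{\gtr}(R_\nabla^l)]=\sum_{i=0}^{n-1}(-1)^i\sigma_A^l(E_i)$.

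Finally, invoking the independence on the choice of $n$-connection to equate the two computations yields the desired equality $\sum_{i=0}^{n-1}(-1)^i\sigma_A^l(E_i)=0$ for every $l\geq 1$ (in particular for $l>1$). I expect essentially no obstacle: the only substantive point is the decomposition of the graded trace on block-diagonal endomorphisms into the alternating sum of ordinary traces, which is routine once the graded trace has been defined; everything else is an immediate consequence of material the excerpt already states.
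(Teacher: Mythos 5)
Your proposal is correct and follows essentially the same route as the paper: the paper first records (as a corollary of the independence of the Pontryagin characters on the choice of connection up to homotopy, together with the computation of the curvature of a grading-preserving connection) that $\sigma_A^l(\E)=\sum_{i}(-1)^i\sigma_A^l(E_i)$, and then observes that the existence of an $n$-representation forces $R_{\mathcal D}=0$ and hence $\sigma_A^l(\E)=0$. Your two-connection comparison, including the identification of the graded trace of a block-diagonal curvature with the alternating sum of ordinary traces, is exactly the content of the paper's Example \ref{usual_grd_curvature}, Corollary \ref{old_alt_sign_is_new} and Proposition \ref{inv_conn}.
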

      In particular, for a graded vector bundle with grading
      concentrated in degrees $0$ and $1$, this theorem gives a simple
      obstruction to the existence of a $2$-representation (see
      Theorem \ref{main_app} below).
      \begin{mythm}\label{main_theorem_intro}
        Let $E$ and $F$ be smooth vector bundles over $M$, and let
        $A\to M$ be a Lie algebroid. If there is a $2$-representation
        of $A$ on $E[0]\oplus F[1]$, then the $A$-Pontryagin classes  of $E$ and $F$ are equal:
        \[ p^l_A(E)=p_A^l(F) \in H^l(A)
        \]
        for all $l\geq 1$.
      \end{mythm}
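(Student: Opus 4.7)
My plan is to deduce this theorem from the preceding general statement for $n$-representations, specialised to $n=2$, combined with the classical Newton identities that relate power sums to elementary symmetric polynomials.

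First I would apply the previous theorem with $n=2$, $E_0=E$, $E_1=F$, obtaining
\[
\sigma^l_A(E)-\sigma^l_A(F)=0\in H^{2l}(A)\qquad\text{for every }l>1.
\]
The missing case $l=1$ is automatic: any real vector bundle admits an orthogonal $A$-connection (choose any fibre metric), whose curvature takes values in skew-symmetric endomorphisms, and the trace of any odd power of such a curvature vanishes pointwise; in particular $\sigma^1_A(E)=0=\sigma^1_A(F)$. Combining the two observations, $\sigma^l_A(E)=\sigma^l_A(F)$ for every $l\geq 1$.

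Next I would translate this equality of Pontryagin characters into an equality of Pontryagin classes. In the Chern--Weil framework the classes $p^k_A$ are, up to a fixed normalisation, the elementary symmetric polynomials in the formal eigenvalues of the curvature $R$, while the $\sigma^l_A$ are the corresponding power sums $\tr(R^l)$. Newton's identities express each $p^k_A$ as a universal rational polynomial in $\sigma^1_A,\ldots,\sigma^k_A$, and since these identities are purely formal they descend to any graded-commutative $\R$-algebra, in particular to $H^\bullet(A)$. Applying them to $E$ and $F$ and using the equality of all Pontryagin characters already established, one concludes that $p^l_A(E)=p^l_A(F)$ for every $l\geq 1$.

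The step that needs most care is the identification behind the first step: for a grading-preserving $2$-connection $\nabla^E\oplus\nabla^F$ on $\E=E[0]\oplus F[1]$, one must verify that the graded-trace Pontryagin character $[\widehat{\gtr}(R_{\mathcal D}^l)]$ really does equal the signed difference $\sigma^l_A(E)-\sigma^l_A(F)$ of the classical Chern--Weil characters of $E$ and $F$. This is immediate from the definition of the graded trace on $\underline{\End}(E[0]\oplus F[1])$, in which the odd summand $F[1]$ contributes with a negative sign, but it is precisely the bridge that connects the graded construction of the introduction to the classical characters that appear in the statement, and hence the point at which the bookkeeping must be done cleanly.
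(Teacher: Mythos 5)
Your proposal is correct and follows essentially the same route as the paper: Theorem \ref{main_app} is deduced there by specialising the $n$-representation obstruction to $n=2$ to get $\sigma^l_A(E)=\sigma^l_A(F)$ and then observing that equal generators force equal Pontryagin algebras and classes. You merely make explicit two steps the paper leaves implicit --- the $l=1$ case via a metric connection (which the paper has already established in \S\ref{classical_pont}, where it shows $\Pont^{2i}_A=0$ for $i$ odd) and the passage from power sums to the elementary symmetric polynomials $f_{2l}$ via Newton's identities --- and your bookkeeping of the graded trace matches Corollary \ref{old_alt_sign_is_new}.
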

      Using the adjoint representation up to homotopy of a Lie
      algebroid $A\to M$, which is a $2$-representation of $A$ on
      $A[0]\oplus TM[1]$, this yields the following result.
\begin{mythm}
  Let $A$ be a vector bundle over a smooth manifold $M$, and let
  $\rho\colon A\to TM$ be a vector bundle morphism over the identity.
  If $A\to M$ carries a Lie algebroid structure with anchor $\rho$,
  then the Pontryagin classes of $A$ and $TM$ satisfy
        \[ \rho^\star \left(p^l(A)\right)=\rho^\star \left(p^l(TM) \right)\in H^l(A)
        \]
        for all $l\geq 1$.
      \end{mythm}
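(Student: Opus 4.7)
The plan is to combine Theorem \ref{main_theorem_intro} with the canonical adjoint representation up to homotopy, mentioned in the paragraph immediately preceding the statement. After choosing an auxiliary linear $TM$-connection on $A$, this construction yields a $2$-representation of $A$ on the graded vector bundle $A[0]\oplus TM[1]$. Applying Theorem \ref{main_theorem_intro} with $E=A$ and $F=TM$ then gives
\[p^l_A(A)=p^l_A(TM)\in H^l(A)\]
for every $l\geq 1$.

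To convert this equality of $A$-Pontryagin classes into the desired equality of pullbacks of classical Pontryagin classes, I would establish the naturality identification $p^l_A(E)=\rho^\star p^l(E)$ for every vector bundle $E\to M$, where $\rho^\star\colon H^\bullet(M)\to H^\bullet(A)$ is the map induced by the anchor. For this, pick any ordinary linear connection $\nabla^E$ on $E$ and form the $A$-connection $\nabla^A_a e:=\nabla^E_{\rho(a)}e$. Its curvature $R^A\in\Omega^2(A,\End E)$ is the pullback $\rho^* R^E$ of the classical curvature, and therefore
\[\tr\bigl((R^A)^l\bigr)=\rho^*\tr\bigl((R^E)^l\bigr)\in\Omega^{2l}(A).\]
Since $\rho^*\colon(\Omega^\bullet(M),\dr)\to(\Omega^\bullet(A),\dr_A)$ is a morphism of differential graded algebras, this identity descends to $p^l_A(E)=\rho^\star p^l(E)$ in cohomology. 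Applying this with $E=A$ and $E=TM$ and substituting into the equality above yields the statement.

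The only nontrivial point is the naturality step $p^l_A(E)=\rho^\star p^l(E)$, which is a formal compatibility between the Chern--Weil constructions on $M$ and on $A$. It should follow essentially automatically from the fact that $\rho^*$ is a morphism of dg-algebras and that the trace commutes with the pullback of endomorphism-valued forms, but one has to spell out explicitly that the $A$-Pontryagin classes defined via the curvature of an arbitrary $A$-connection agree with those defined via an anchor-pullback connection (both are independent of the choice of $A$-connection by the standard Chern--Simons argument, which carries over verbatim to the Lie algebroid setting).
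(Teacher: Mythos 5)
Your proposal is correct and follows essentially the same route as the paper: the statement is deduced from Theorem \ref{main_app} applied to the adjoint $2$-representation of $A$ on $A[0]\oplus TM[1]$ (constructed from an auxiliary $TM$-connection on $A$), combined with the naturality $p^l_A(E)=\rho^\star p^l(E)$, which is exactly the content of Remark \ref{imp_rem} (due to Fernandes). The naturality step you flag as the ``only nontrivial point'' is handled in the paper precisely as you outline it, via the anchor-pullback connection and independence of the choice of $A$-connection.
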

      This is in fact a special case of the following theorem, which is proved using a similar method.
       \begin{mythm}
        Let $A$ and $B$ be Lie algebroids over $M$. If there is a Lie
        algebroid morphism $\partial\colon B\to A$ over the identity
        on $M$, then
        \[\partial^*p_A^l(A)=p_B^l(A)=p_B^l(B)=\partial^*p_A^l(B)\in H^l(B)
        \]
        for all $l\geq 1$.
      \end{mythm}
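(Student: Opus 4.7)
The plan is to split the four expressions into two naturality statements at the ends and one inner equality $p_B^l(A)=p_B^l(B)$ handled via the adjoint representations. For the outer equalities I will prove the following naturality lemma: for any vector bundle $E\to M$ one has $\partial^* p_A^l(E) = p_B^l(E)$ in $H^l(B)$. Choose any $A$-connection $\nabla^A$ on $E$ and define the pulled-back $B$-connection $\nabla^B_b e := \nabla^A_{\partial(b)} e$; this is well-defined because $\partial$ covers the identity. Because $\partial$ is a Lie algebroid morphism, the pullback $\partial^*\colon \Omega^\bullet(A,\End(E))\to\Omega^\bullet(B,\End(E))$ is a graded algebra morphism that intertwines the exterior covariant derivatives of $\nabla^A$ and $\nabla^B$, so that $R_{\nabla^B}=\partial^* R_{\nabla^A}$ in $\Omega^2(B,\End(E))$. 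Since $\partial^*$ also commutes with $\tr$, applying it to $\tr(R_{\nabla^A}^l)$ yields $\tr(R_{\nabla^B}^l)$, hence $\partial^*\sigma_A^l(E)=\sigma_B^l(E)$ in cohomology, and the same universal polynomial expressions give $\partial^*p_A^l(E)=p_B^l(E)$. Specialising to $E=A$ and to $E=B$ produces the two outer equalities of the statement.

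For the middle equality $p_B^l(A)=p_B^l(B)$ I invoke the adjoint representations of both algebroids together with Theorem~\ref{main_theorem_intro}. The adjoint representation up to homotopy of $A$ is a $2$-representation of $A$ on $A[0]\oplus TM[1]$, so Theorem~\ref{main_theorem_intro} applied to $A$ yields $p_A^l(A)=p_A^l(TM)$ in $H^l(A)$; pulling back along $\partial$ and using the naturality just established, once with $E=A$ and once with $E=TM$, gives $p_B^l(A)=p_B^l(TM)$. Symmetrically, the adjoint representation of $B$ is a $2$-representation of $B$ on $B[0]\oplus TM[1]$, so Theorem~\ref{main_theorem_intro} applied to $B$ yields $p_B^l(B)=p_B^l(TM)$. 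Combining these identities through the common pivot $TM$ produces the middle equality.

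The main obstacle is the naturality lemma in the first paragraph: one must verify carefully that a Lie algebroid morphism over the identity induces a pullback of algebroid-form complexes which commutes with the exterior covariant derivatives of a connection and its pullback, and hence sends curvature to curvature. This uses crucially that $\partial$ preserves both anchors and brackets. Once this is granted, the content of the theorem reduces entirely to organising the adjoint $2$-representations of $A$ and of $B$ around the common vector bundle $TM$, and the rest is bookkeeping with Theorem~\ref{main_theorem_intro}.
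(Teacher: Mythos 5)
Your proof is correct, but for the central equality $p_B^l(A)=p_B^l(B)$ it takes a genuinely different route from the paper. The paper does not pivot through $TM$: it constructs, from $\partial$ and a choice of connection $\nabla\colon\Gamma(A)\times\Gamma(B)\to\Gamma(B)$, a single $2$-representation of $B$ on $B[0]\oplus A[1]$ (via ``$\partial$-basic'' connections $\nabla^\partial$ and a basic curvature $R^\partial_\nabla$ generalising the adjoint representation), verifies $\mathcal D^2=0$ by computation, and then applies Theorem \ref{main_app} once to that graded bundle. You instead combine the two adjoint representations of $A$ and of $B$ separately — obtaining $p_A^l(A)=p_A^l(TM)$ and $p_B^l(B)=p_B^l(TM)$ from Theorem \ref{main_theorem_intro} — and glue them with the naturality lemma $\partial^*p_A^l(E)=p_B^l(E)$ (which is Remark \ref{imp_rem} with $\rho$ replaced by $\partial$, and is exactly what the outer equalities of the statement require anyway). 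Your route buys economy: it uses only ingredients already established in the paper plus an elementary pullback argument, and avoids verifying flatness of a new mixed superconnection. The paper's route buys a structural result of independent interest — the $2$-representation of $B$ on $B[0]\oplus A[1]$ attached to a morphism — and yields the equality intrinsically, without routing through the tangent bundle. Both arguments are sound and give the full chain of equalities in the statement.
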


\subsection*{Bott's vanishing theorems and obstructions to the
  existence of ideals in Lie algebroids}
The starting point of this paper is actually \emph{Bott's vanishing
theorem} \cite{Bott72} on Pontryagin classes and foliations:
\begin{thm}[\cite{Bott72}]
  Let $M$ be a smooth manifold and let $F_M$ be a subbundle of
  codimension $q$ of $TM$.  If $F_M$ is involutive, then the
  Pontryagin spaces
\[\Pont^l\left(TM/F_M\right)\subseteq  H^l(M)
  \]
  of $TM/F_M$ are all trivial for $l>2q$.
\end{thm}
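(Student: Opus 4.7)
The plan is to construct a $TM$-connection on $N := TM/F_M$ whose curvature is forced, by involutivity, to lie in a rank-$q$ exterior ideal of $\Omega^\bullet(M)$, and then to observe that sufficiently high powers of this ideal vanish for dimensional reasons, killing every Chern--Weil form of form-degree greater than $2q$. First I would exploit involutivity to introduce the \emph{flat} $F_M$-connection $\nabla^{\mathrm{Bott}}$ on $N$ defined by
\[
\nabla^{\mathrm{Bott}}_X \overline{Y} := \overline{[X,Y]}
\]
for $X \in \Gamma(F_M)$ and $Y \in \Gamma(TM)$, where $\overline{Y}$ denotes the class of $Y$ in $N$; involutivity is precisely what makes this formula depend only on $\overline{Y}$, and the Jacobi identity on $TM$ then yields flatness. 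Then I would extend $\nabla^{\mathrm{Bott}}$ to a full (not necessarily flat) $TM$-connection $\nabla$ on $N$ by picking any complementary subbundle $C \subseteq TM$ with $TM = F_M \oplus C$ and any linear connection on $N$ along $C$; since $\nabla$ agrees with the flat $\nabla^{\mathrm{Bott}}$ in $F_M$-directions, its curvature $R^\nabla \in \Omega^2(M,\End(N))$ satisfies $\ip{X}\ip{Y} R^\nabla = 0$ for all $X,Y \in \Gamma(F_M)$.

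With $\nabla$ in hand, I would turn the vanishing of $R^\nabla$ on pairs of $F_M$-vectors into the stated vanishing of Pontryagin classes. Let $\ann(F_M) \subseteq T^*M$ denote the rank-$q$ annihilator subbundle and let $\mathcal I \subseteq \Omega^\bullet(M)$ be the exterior ideal generated by $\Gamma(\ann(F_M))$; working in a local frame adapted to the splitting $TM = F_M \oplus C$, one sees that $\mathcal I$ consists precisely of the forms vanishing when evaluated on tuples of sections of $F_M$. The identity $\ip{X}\ip{Y} R^\nabla = 0$ for $X,Y \in \Gamma(F_M)$ therefore forces $R^\nabla \in \mathcal I \otimes \End(N)$. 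Consequently, for any invariant polynomial $P$ of polynomial-degree $k$, the Chern--Weil form $P(R^\nabla) \in \Omega^{2k}(M)$ lies in $\mathcal I^k$, and any wedge product of such forms of total polynomial-degree $k$ is likewise in $\mathcal I^k$. But $\mathcal I^k$ is contained in the sections of $\Lambda^k \ann(F_M) \wedge \Lambda^\bullet T^*M$, which is the zero bundle as soon as $k > q$. Hence every polynomial Chern--Weil form in $R^\nabla$ of form-degree above $2q$ already vanishes as a form, and the Pontryagin spaces $\Pont^l(TM/F_M) \subseteq H^l(M)$ are trivial for all $l > 2q$.

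The step I expect to require the most care is the algebraic identification of the degree-$k$ piece of $\mathcal I$ with the $k$-forms vanishing on $k$-tuples of sections of $F_M$; this is cleanest after picking the local splitting and working in an adapted local coframe, where both sides reduce to the same subspace of $\Lambda^k T^*M$. Once that observation is in place, the vanishing of $\mathcal I^k$ for $k > q$ follows immediately from the fact that $\Lambda^k \ann(F_M) = 0$ for $k > q$, and standard Chern--Weil theory takes care of the rest.
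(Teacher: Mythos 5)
Your proof is correct and follows essentially the same route as the paper: the paper establishes this statement as the special case $A=TM$, $B=F_M$, $E=TM/F_M$ of its Theorem \ref{main}, whose proof likewise extends the flat Bott connection to a full connection, places the curvature in the ideal $I^\bullet(B)$ generated by the annihilator of the subbundle, and kills all Chern--Weil forms of polynomial degree exceeding $q$ because the $(q+1)$-st power of that ideal vanishes. Your identification of the ideal with the forms vanishing on tuples of sections of $F_M$, and the final degree count, match the paper's argument step for step.
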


Since an involutive subbundle $F_M\subseteq TM$ is always represented
on the normal bundle $TM/F_M$ via the Bott connection \cite{Bott72},
this theorem is a special case of the following result (see Theorem
\ref{main}).
\begin{mythm}\label{main_intro}
  Let $E$ be a smooth vector bundle over a smooth manifold $M$ and let
  $A$ be a Lie algebroid over $M$.  If there exists a Lie subalgebroid
  $B$ of $A$ of codimension $q$ with a linear representation
  $\nabla\colon \Gamma(B)\times\Gamma(E)\to\Gamma(E)$, then the
  $A$-Pontryagin spaces 
  \[\Pont^l_A(E)\subseteq H^l(A)
  \]
  are all trivial for $l>2q$.
\end{mythm}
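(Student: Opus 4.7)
The plan is to transplant Bott's original argument to the Lie algebroid setting. The key input is a well-chosen $A$-connection on $E$ whose curvature is forced to lie in a nilpotent ideal of $A$-forms.

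First, I would choose a splitting $A = B\oplus C$ of vector bundles with $\rank C = q$, and an arbitrary $A$-connection $\nabla'$ on $E$. Setting $\tilde\nabla_\alpha e := \nabla_{p_B(\alpha)} e + \nabla'_{p_C(\alpha)} e$ (with $p_B,p_C$ the splitting projections) gives an $A$-connection $\tilde\nabla$ on $E$ extending $\nabla$. For $\beta_1,\beta_2\in\Gamma(B)$, the Lie subalgebroid property yields $[\beta_1,\beta_2]_A\in\Gamma(B)$, so $R_{\tilde\nabla}(\beta_1,\beta_2) = R_{\nabla}(\beta_1,\beta_2)=0$ by flatness of $\nabla$. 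Hence $R_{\tilde\nabla}$ vanishes on $\wedge^2 B$.

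Second, I would translate this vanishing into an ideal membership statement. Writing $B^\circ = \ann(B)\subseteq A^*$, which has rank $q$, the splitting induces $\wedge^2 A^* = \wedge^2 B^* \oplus (B^*\wedge B^\circ) \oplus \wedge^2 B^\circ$, and the vanishing on $\wedge^2 B$ is equivalent to the $\wedge^2 B^*$-component of $R_{\tilde\nabla}$ being zero. Thus $R_{\tilde\nabla}$ belongs to the graded ideal $\mathcal I_{\End}\subseteq\Omega^\bullet(A,\End(E))$ generated by $\Gamma(B^\circ)\subseteq\Omega^1(A)$. Consequently each power $R_{\tilde\nabla}^l$ lies in $\mathcal I_{\End}^l$, and applying the $\Omega^\bullet(A)$-linear trace produces the Pontryagin-character representatives $\tr(R_{\tilde\nabla}^l)\in \mathcal I^l\cap\Omega^{2l}(A)$, where $\mathcal I\subseteq\Omega^\bullet(A)$ is the corresponding scalar ideal.

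Third, I would exploit the nilpotency of $\mathcal I$: since $\rank(B^\circ)=q$, graded commutativity allows us to sweep the $m$ generators in any element of $\mathcal I^m$ to the left, producing a factor in $\Gamma(\wedge^m B^\circ)=0$ whenever $m>q$. A general element of $\Pont^l_A(E)$ is a linear combination of products $\sigma^{l_1}_A(E)\wedge\cdots\wedge\sigma^{l_k}_A(E)$ with $l = 2(l_1+\cdots+l_k)$, whose representative cocycle sits in $\mathcal I^{l_1+\cdots+l_k}$. When $l>2q$ we necessarily have $l_1+\cdots+l_k>q$, so every such representative vanishes, giving $\Pont^l_A(E)=0$.

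The main obstacle I expect is the second step: precisely identifying the ideal to which $R_{\tilde\nabla}$ belongs and tracking its behaviour under taking powers and applying the trace. Once that is in place, the conclusion reduces to the trivial observation $\wedge^{q+1}B^\circ = 0$ combined with the fact that representatives of products of Pontryagin characters add the exponents of $\mathcal I$.
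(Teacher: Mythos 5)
Your proposal is correct and follows essentially the same route as the paper: extend the flat $B$-connection to an $A$-connection, observe that the curvature then lies in the ideal $I^\bullet(B)\subseteq\Omega^\bullet(A)$ generated by $\Gamma(B^\circ)$, and use that the $(q+1)$-st power of this ideal vanishes because $\rank B^\circ=q$. The only cosmetic difference is that you make the extension explicit via the splitting projections and phrase the final step in terms of products of Pontryagin characters, whereas the paper phrases it in terms of invariant polynomials of degree $d>q$; these are the same argument.
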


The generalisation of this theorem to the setting of
Pontryagin algebras of a graded vector bundle is given by Theorem
\ref{Bott_gen_graded}. Although it does not yet lead to additional
obstruction results for particular examples, its proof is given in
detail for completeness and future applications.

\medskip

The author's original motivation for proving Theorem \ref{main_intro}
is her search for topological obstructions to the existence of ideals
in Lie algebroids. Jointly with Ortiz, the author identified in
\cite{JoOr14} what they consider the ``right notion'' of ideals in Lie
algebroids. These objects are called \emph{infinitesimal ideal
  systems} and defined as follows.

\begin{mydef}[\cite{JoOr14},\cite{Hawkins08}]\label{iis_def}
  Let $(q\colon A\to M, \rho,[\cdot\,,\cdot])$ be a Lie algebroid,
  $F_M\subseteq TM$ an involutive subbundle, $J\subseteq A$ a
  subbundle over $M$ such that $\rho(J)\subseteq F_M$, and $\nabla$ a
  flat partial $F_M$-connection on $A/J$ with the following
  properties:
\begin{enumerate}
\item If $a\in\Gamma(A)$ is $\nabla$-parallel\footnote{A section
    $a\in\Gamma(A)$ is said to be \textbf{$\nabla$-parallel} if
    $\nabla_X\bar a=0$ for all $X\in\Gamma(F_M)$. Here, $\bar a$ is
    the class of $a$ in $\Gamma(A/J)\simeq \Gamma(A)/\Gamma(J)$.},
  then $[a,j]\in\Gamma(J)$ for all $j\in\Gamma(J)$.
\item If $a,b\in\Gamma(A)$ are $\nabla$-parallel, then $[a,b]$ is also
  $\nabla$-parallel.\label{iis_def}
\item If $a\in\Gamma(A)$ is  $\nabla$-parallel, then $\rho(a)$ is
  $\nabla^{F_M}$-parallel, where
  \[\nabla^{F_M}\colon\Gamma(F_M)\times\Gamma(TM/F_M)\to\Gamma(TM/F_M)\]
  is the Bott connection associated to $F_M$.
\end{enumerate}
Then  the triple
$(F_M,J,\nabla)$ is an \textbf{infinitesimal ideal system in $A$.}
\end{mydef}
The first axiom implies immediately that $J\subseteq A$ is a
subalgebroid of $A$.  Infinitesimal ideal systems are an infinitesimal
version of the \emph{ideal systems} in \cite{HiMa90b,Mackenzie05} -- the
latter are exactly the kernels of fibrations of Lie algebroids.
Infinitesimal ideal systems already appear in \cite{Hawkins08} (not
under this name) in the context of geometric quantization as the
infinitesimal version of polarizations on groupoids.  Moreover, the
special case where $F_M=TM$ has been studied independently in
\cite{CrSaSt12} in relation with a modern approach to Cartan's work on
pseudogroups.

\medskip
 Consider an involutive subbundle $F_M\subseteq TM$ and the Bott
  connection\linebreak
$\nabla^{F_M}\colon\Gamma(F_M)\times\Gamma(TM/F_M)\to\Gamma(TM/F_M)$
associated to it. Then the triple $( F_M, F_M, \nabla^{F_M})$ is an
infinitesimal ideal system in the Lie algebroid $TM$. Therefore,
Bott's vanishing theorem provides an obstruction result for this
particular class of infinitesimal ideal systems. The general goal of
this paper is to find adequate generalisations of Bott's vanishing
theorem, yielding obstructions to the existence of infinitesimal ideal
systems in a given Lie algebroid $A\to M$ -- in terms of the
Pontryagin classes of $A$ and $TM$.

The following result (see Propositions \ref{prop_easy_ob1} and \ref{prop_easy_ob2})
gives the first set of information that can be extracted from
Theorem \ref{main_intro} and the definition of an infinitesimal ideal
system. 
\begin{proposition}
  Let $(F_M,J,\nabla)$ be an infinitesimal ideal system in a Lie
  algebroid $A\to M$. Let $s$ be the codimension of $J$ in $A$ and let
  $q$ be the codimension of $F_M$ in $TM$.  Then
  \begin{enumerate}
  \item the Pontryagin spaces $\Pont^l(A/J)$ and $\Pont^l(TM/F_M)$ in
    $H^\bullet(M)$ all vanish for $l>2q$, and
      \item the Pontryagin spaces
        $\Pont^l_A(A/J)$ and $\Pont^l_A(TM/F_M)$ in $H^\bullet(A)$ all vanish for $l>2\min\{s,q\}$.
        \end{enumerate}
\end{proposition}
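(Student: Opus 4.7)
The plan is to deduce both parts from Theorem \ref{main_intro} (and Bott's theorem) applied to a few well-chosen subalgebroid-representation pairs that the axioms of an infinitesimal ideal system supply.

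For part (1): since $F_M$ is involutive of codimension $q$ in $TM$, Bott's vanishing theorem immediately gives $\Pont^l(TM/F_M)=0$ for $l>2q$. For the other bundle, the definition of an infinitesimal ideal system furnishes a flat $F_M$-representation $\nabla$ on $A/J$, so Theorem \ref{main_intro} — applied with ambient algebroid $TM$, subalgebroid $F_M$ of codimension $q$, and representation $\nabla$ — yields $\Pont^l(A/J)=0$ for $l>2q$.

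For part (2) I combine a pullback bound with a direct $J$-bound. Pulling any $TM$-connection on a vector bundle $E\to M$ back to $A$ via $\rho$ gives an $A$-connection whose curvature is $\rho^\star R^{TM}$; since the graded trace commutes with $\rho^\star$, this yields $\Pont^l_A(E)=\rho^\star\Pont^l(E)$ for every $E$. Hence part (1) instantly produces the $2q$ bound for both $\Pont^l_A(A/J)$ and $\Pont^l_A(TM/F_M)$ in $H^\bullet(A)$. For the $2s$ bound I use $J$ itself as a Lie subalgebroid of $A$ of codimension $s$: any $j\in\Gamma(J)$ has vanishing image in $A/J$ and is therefore vacuously $\nabla$-parallel, so Axiom~(1) applied with $a=j$ forces $[\Gamma(J),\Gamma(J)]\subseteq\Gamma(J)$ and $J$ is a subalgebroid. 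The adjoint-type formula $\nabla^J_j\bar a:=\overline{[j,a]}$ is then a well-defined flat $J$-representation on $A/J$ (well-definedness uses $[J,J]\subseteq J$; flatness uses the Jacobi identity). Similarly, pulling back the Bott connection on $TM/F_M$ along the Lie algebroid morphism $\rho|_J\colon J\to F_M$ — that is, $\nabla^J_j\bar Y:=\overline{[\rho(j),Y]}$, which is well defined since $\rho(J)\subseteq F_M$ — provides a flat $J$-representation on $TM/F_M$. Two applications of Theorem \ref{main_intro} with subalgebroid $J$ then deliver the $2s$ bound. Combining the two bounds yields vanishing for $l>\min(2s,2q)=2\min\{s,q\}$.

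I expect no serious obstacle here: once Theorem \ref{main_intro} is in hand, the result is essentially a bookkeeping exercise in which axiom, subalgebroid, representation are matched up appropriately. The two places where care is needed are (a) recognising that sections of $J$ are automatically $\nabla$-parallel, which is what upgrades Axiom~(1) into the subalgebroid condition, and (b) verifying the naturality statement $\Pont^l_A(E)=\rho^\star\Pont^l(E)$, which follows from the functoriality of the Chern–Weil construction under pullback of connections along the Lie algebroid morphism $\rho\colon A\to TM$.
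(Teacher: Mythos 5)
Your proof is correct and follows essentially the same route as the paper: part (1) is Theorem \ref{main_intro} applied to the subalgebroid $F_M\subseteq TM$ with the flat connections on $A/J$ and $TM/F_M$, and part (2) combines the identity $\Pont^\bullet_A(E)=\rho^\star\Pont^\bullet(E)$ (the paper's Remark \ref{imp_rem}) with a second application of Theorem \ref{main_intro} to the subalgebroid $J\subseteq A$ equipped with exactly the two flat $J$-connections $\nabla^J_j\bar a=\overline{[j,a]}$ and $\nabla_j\overline{X}=\overline{[\rho(j),X]}$ that the paper writes down. Your observation that sections of $J$ are vacuously $\nabla$-parallel, so Axiom (1) forces $J$ to be a subalgebroid, is precisely the paper's justification as well.
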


However, this result turns out to be rather unsatisfactory on its own
because it uses only very few of the axioms of an infinitesimal ideal
system: (1), (2) and (3) in the definition 
are not used in the proof of this proposition. These three axioms
ensure \cite{DrJoOr15} that an infinitesimal ideal system in a Lie
algebroid $A\to M$ defines a subrepresentation $J[0]\oplus F_M[1]$ of
the adjoint representation up to homotopy of $A$ on
$A[0]\oplus TM[1]$, after the choice of a suitable connection.
Theorem \ref{main_theorem_intro} hence translates this fact in the
context of $A$-Pontryagin classes of $F_M$ and $J$. More precisely,
the results in \cite{DrJoOr15} and Theorem \ref{main_theorem_intro}
lead to further obstructions to the existence of an infinitesimal
ideal system in a Lie algebroid $A$ (see Theorem \ref{adv_ob_1}):
      \begin{mythm}
        Let $(A\to M, \rho, [\cdot\,,\cdot])$ be a Lie algebroid.
If $(F_M,J,\nabla)$ is an infinitesimal ideal system in $A$, then 
  \[ p^l_A(J)=p^l_A(F_M)
  \]
   for all $l\geq 1$.
\end{mythm}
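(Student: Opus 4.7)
The plan is to reduce the statement directly to Theorem \ref{main_theorem_intro} via the structural result from \cite{DrJoOr15} that the author has already recalled. Concretely, the strategy has two steps: first, exhibit $J[0] \oplus F_M[1]$ as a graded vector bundle carrying a $2$-representation of $A$; second, apply Theorem \ref{main_theorem_intro} to that $2$-representation.

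For the first step, I would invoke the construction from \cite{DrJoOr15} in the following way. Choose a splitting of the short exact sequence $0 \to J \to A \to A/J \to 0$ and a splitting of $0 \to F_M \to TM \to TM/F_M \to 0$; equivalently, pick a linear $TM$-connection on $A$ and a linear $TM$-connection on $TM$ that are compatible with the two subbundles $J \subseteq A$ and $F_M \subseteq TM$ in the sense needed in \cite{DrJoOr15}. With such a choice, the adjoint representation up to homotopy of $A$ on $A[0] \oplus TM[1]$ is realised by an explicit operator $\mathcal D$, and the axioms (1), (2), (3) in Definition \ref{iis_def} of an infinitesimal ideal system are precisely what is required to ensure that $\mathcal D$ restricts to an operator on sections of the subbundle $J[0] \oplus F_M[1]$. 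Hence $(J[0] \oplus F_M[1], \mathcal D\!\restriction)$ is itself a $2$-representation of $A$.

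Once this sub-$2$-representation is in hand, the second step is immediate: Theorem \ref{main_theorem_intro} applied with $E = J$ and $F = F_M$ gives
\[
p^l_A(J) = p^l_A(F_M) \in H^l(A) \quad \text{for all } l \geq 1,
\]
which is the desired conclusion.

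The expected main obstacle is not in the application of Theorem \ref{main_theorem_intro}, which is a black box at this stage, but in articulating step one cleanly, namely in translating each of the three numbered axioms in Definition \ref{iis_def} into the statement that the chosen adjoint operator $\mathcal D$ preserves sections of the graded subbundle $J[0] \oplus F_M[1]$. This is where the compatibility between the chosen connections, the bracket on $A$, and the anchor $\rho$ must be used; fortunately, this bookkeeping has already been done in \cite{DrJoOr15}, so the proof is essentially a citation followed by the application of Theorem \ref{main_theorem_intro}.
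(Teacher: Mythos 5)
Your proposal is correct and follows essentially the same route as the paper: the author likewise invokes the characterisation from \cite{DrJoOr15} that, for a suitable extension $\tilde\nabla\colon\mx(M)\times\Gamma(A)\to\Gamma(A)$ of $\nabla$, the axioms of an infinitesimal ideal system are exactly the conditions for the adjoint $2$-representation $\ad_{\tilde\nabla}$ on $A[0]\oplus TM[1]$ to restrict to a $2$-representation of $A$ on $J[0]\oplus F_M[1]$, and then applies Theorem \ref{main_app}. The only slight imprecision is that the restriction is governed by a single linear connection $\tilde\nabla$ on $A$ extending $\nabla$ (from which both basic connections and the basic curvature are built), rather than by two independently chosen connections on $A$ and $TM$, but since you defer the bookkeeping to \cite{DrJoOr15} this does not affect the argument.
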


\subsection*{Outline of the paper}
Section \ref{background} recalls in detail the Chern-Weil construction
of the Pontryagin classes of a vector bundle, using the powerful
modern language exposed in \cite{CrvdBa09}. The author recommends here
as well the reference \cite{Tu17}, which summarises in a beautiful
manner the construction of characteristic classes associated to vector
bundles and principal bundles, as well as some of their applications
in geometry and topology.

Section \ref{Bott_sec} proves the first generalisation of Bott's
vanishing theorem in \cite{Bott72}, and proves a refinement of it in
the case where an appropriate Atiyah class vanishes.

Section \ref{conn_hom_sec} studies connections up to homotopy and the
Pontryagin algebras of graded vector bundles of finite rank. The
obstruction to the existence of representations up to homotopy is also
proved there, as well as Bott's vanishing theorem for graded vector
bundles.

Section \ref{iis_sec} finally applies the prior results to the study
of characteristic classes defined by infinitesimal ideal systems in
Lie algebroids.

\subsection*{Outlook}
The construction of the $A$-Pontryagin algebra of a graded vector
bundle presented here can be extended to a construction of the
$(\mathcal M, \mathcal Q)$-Pontryagin algebra of a graded vector
bundle, for a Lie $n$-algebroid $(\mathcal M, \mathcal Q)$. This is
the subject of a project in progress that is joint with Papantonis.

\subsection*{Acknowledgements}
The author warmly thanks her student Jannick R\"onsch for writing his
Bachelor thesis about Section \ref{background_forms} and Theorem
\ref{main_intro}, which could already be found in an earlier draft of
this article. The review of his thesis, as well as the supervision
meetings, were a source of motivation for the author to continue her
preliminary work on this project.


Finally, the author thanks Theocharis Papantonis for his careful
reading, Thomas Schick for inspiring discussions, and Jim Stasheff for
useful comments.

\section{Preliminaries}\label{background}
This section recalls the modern definition of Pontryagin classes of a
vector bundle. It begins with some background on linear connections on
vector bundles and the associated calculus on differential forms. The
second subsection recalls the definition of the Pontryagin classes of
a vector bundle.  
In this section, the main reference
is \cite{CrvdBa09}, but $A$-Pontryagin classes of a vector bundle were
defined in \cite{Fernandes02}.

\subsection{Notation and vector-valued forms}

Given a Lie algebroid $A\to M$, we denote by $H^\bullet(A)$ the Lie
algebroid cohomology defined by the complex
$(\Omega^\bullet(A),\dr_A)$. As above, for simplicity, we write
$H^\bullet(M)$ for the (de Rham) cohomology of the standard Lie
algebroid $TM\to M$.

\medskip

Let $A$ be a Lie algebroid over a smooth manifold $M$ and let $E\to M$
be a smooth vector bundle. Then
$\Omega^\bullet(A,E):=\Gamma(\wedge^\bullet A^*\otimes E)$.  If $A=TM$
is the standard tangent Lie algebroid, then $\Omega^\bullet(TM,E)$ is
written $\Omega^\bullet(M,E)$ for simplicity. If $\underline{E}$ is a
graded vector bundle, then $\Omega(A,\E)_\bullet$ denotes
$\Gamma(\wedge^\bullet A^*\otimes \E)$ but with the total grading
defined as the sum of the form degree with the degree of the image in
$\E$.  The degree of a (degree-homogeneous) element $K$ of
$\Omega(A,\E)_\bullet$ is written $|K|\in\mathbb Z$.

For $K\in \Omega^l(A,\Hom(E,E'))$, the graded
$\Omega^\bullet(A)$-linear operator
$\widehat{K}\colon \Omega^\bullet(A,E)\to \Omega^{\bullet+l}(A,E')$ is
defined by $\widehat{K}(\omega)=K\wedge \omega$, i.e.
\[ \widehat{K}(\omega)(a_1,\ldots,a_{s+l})=\sum_{\sigma\in
    \mathfrak{S}_{(l,s)}}(-1)^{\sigma}K(a_{\sigma(1)},\ldots,a_{\sigma(l)})(\omega(a_{\sigma(l+1)},\ldots,
  a_{\sigma(l+s)}))
\]
for $\omega\in\Omega^s(A,E)$ and $a_1,\ldots,a_{s+l}\in\Gamma(A)$.
Here, $ \mathfrak{S}_{(l,s)}$ is the set of $(l,s)$-shuffles, i.e.~the permutations $\sigma\in S_{l+s}$ such that
$\sigma(1)<\ldots<\sigma(l)$ and $\sigma(l+1)<\ldots<\sigma(l+s)$.

The space of graded-$\Omega^\bullet(A)$-linear operators
$\Omega(A,E)\to \Omega(A,E')$ is denoted by
$\Hom^\bullet_{\Omega(A)}(\Omega(A,E),\Omega(A,E'))$. That is, an
element $\mathcal K$ of
$\Hom^s_{\Omega(A)}(\Omega(A,E),\Omega(A,E'))$, for $s\geq 0$, is a
map $\mathcal K\colon\Omega^\bullet(A,E)\to \Omega^{\bullet+s}(A,E')$
satisfying
$\mathcal
K(\omega\wedge\eta)=(-1)^{s\cdot|\omega|}\omega\wedge\mathcal K(\eta)$
for all $\omega\in\Omega^\bullet(A)$ and $\eta\in\Omega^\bullet(A,E)$.

The map
$\Omega^\bullet(A,\Hom(E,E'))\to
\Hom^\bullet_{\Omega(A)}(\Omega(A,E),\Omega(A,E'))$ given by
$K\mapsto \widehat{K}$ is a bijection \cite{ArCr12}, with inverse sending
$\mathcal K\colon \Omega^\bullet(A,E)\to\Omega^{\bullet+s}(A,E')$ to
$\mathcal K_0\in\Omega^s(A,\Hom(E,E'))$ defined
by
\[\mathcal K_0(a_1,\ldots,a_s)(e)=\mathcal
  K(e)(a_1,\ldots,a_s)\] for $a_1,\ldots,a_s\in\Gamma(A)$ and
$e\in\Gamma(E)$.

Let now $\E=\oplus_{z\in\mathbb Z}E_z[z]$ be a graded vector bundle
over $M$.  As always, the $\Omega^\bullet(A)$-module of $\E$-valued
forms $\Omega^\bullet(A,\E)$ has a total grading given by
$\deg \eta=s+l$ for $\eta\in\Omega^s(A,E_l)$.
Here also, there is a bijection between elements
$K\in \Omega(A,\underline{\Hom}(\E,\underline{F}))_s$ and
graded-$\Omega^\bullet(A)$-linear operators
$\mathcal K\colon
\Omega(A,\E)_\bullet\to\Omega(A,\underline{F})_{\bullet+s}
$ that increase the total degree by $s$.  An element 
$K\in \Omega(A,\underline{\Hom}(\E,\underline{F}))_s$
can be written
\[K=\sum_{i=0}^s\sum_{j-l=s-i}K^{i,l,j}\in\bigoplus_{i=0}^s\bigoplus_{j-l=s-i}\Omega^i(A,\Hom(E_l,F_j)).
\]
The
corresponding
$\widehat{K}\in\End_{\Omega^\bullet(A)}(\Omega(A,\E),\Omega(A,\E))_s$
is given by
\[ \widehat{K}=\sum_{i=0}^s\sum_{j-l=s-i}\widehat{K^{i,l,j}},
\]
with
$\widehat{K^{i,l,j}}\colon \Omega^\bullet(A,E_l)\to
\Omega^{\bullet+i}(A,F_j)$ defined as before. The inverse to the map
$\widehat{\cdot}$ is easily defined as above.

Finally, the graded commutator of degree-homogeneous elements\linebreak
$K_1,K_2\in\Omega(A, \underline{\End}(\E))_\bullet $
can now be defined by
\[
  \widehat{[K_1,K_2]}=\left[\widehat{K_1},\widehat{K_2}\right]=\widehat{K_1}\circ\widehat{K_2}-(-1)^{|K_1|\cdot|K_2|}\widehat{K_2}\circ\widehat{K_1}.
\]

\subsection{Linear connections on vector bundles, and vector valued forms}\label{background_forms}
Let $E\to M$ be a vector bundle, and let
$(A\to M, \rho, [\cdot\,,\cdot])$ be a Lie algebroid over the same
base.  Then a linear $A$-connection
$\nabla\colon \Gamma(A)\times\Gamma(E)\to\Gamma(E)$ is equivalent to
an operator
$\dr_\nabla\colon \Omega^\bullet(A,E)\to \Omega^{\bullet+1}(A,E)$
satisfying
\begin{equation}\label{connection}
  \dr_\nabla(\omega\wedge\eta)=(\dr_A\omega)\wedge \eta+(-1)^l\omega\wedge\dr_\nabla\eta
\end{equation}
for $\omega\in \Omega^l(A)$ and $\eta\in\Omega^\bullet(A,E)$.
Given $\nabla$, the operator $\dr_\nabla$ is defined by \eqref{connection} and by
 \[\dr_\nabla e=\nabla_\cdot e\in\Omega^1(A,E)\]
 for $e\in\Gamma(E)=\Omega^0(A,E)$.  For instance, if $E=\R\times M$
 with the canonical flat $A$-connection $\nabla_af=\ldr{\rho(a)}(f)$,
 then $\Omega^\bullet(A,E)\simeq \Omega^\bullet(A)$ and
 $\dr_\nabla=:\dr_A$, which satisfies in addition $\dr_A^2=0$ and
 defines the Lie algebroid cohomology $H^\bullet(A)$.
In general,
\[ \dr_\nabla^2=\widehat{R_\nabla}\colon \Omega^\bullet(A,E)\to \Omega^{\bullet+2}(A,E).
\]

Let $E$ and $E'$ be vector bundles over $M$, and let $\nabla$ and
$\nabla'$ be linear $A$-connections on $E$ and $E'$, respectively. The
reader is invited to check (see also \cite{CrvdBa09}) that for
$K\in\Omega^s(A,\Hom(E,E'))$,
\begin{equation}\label{d_nabla_K_formula}
  \dr_{\nabla'} \circ\widehat{K}-(-1)^s\widehat{K}\circ\dr_\nabla=\widehat{\dr_{\nabla^{\Hom}}K},
  \end{equation}
  where
  $\nabla^{\Hom}\colon
  \Gamma(A)\times\Gamma(\Hom(E,E'))\to\Gamma(\Hom(E,E'))$ is defined
  by
  $\left(\nabla^{\Hom}_a\phi\right)(e)=\nabla'_a(\phi(e))-\phi(\nabla_ae)$
  for $a\in\Gamma(A)$ and $e\in\Gamma(E)$.  If $E=E'$ and
  $\nabla=\nabla'$, set

  \begin{equation}\label{d_nabla_K_end}
    \left[\dr_\nabla, \widehat{K}\right]:=\dr_{\nabla}
    \circ\widehat{K}-(-1)^k\widehat{K}\circ\dr_\nabla=\widehat{\dr_{\nabla^{\End}}K}.
    \end{equation}

    \medskip The trace operator
    $\tr\colon\Gamma(\End(E))\to C^\infty(M)$ can be understood as an
    element of $\Omega^0(A,\Hom(\End(E),\mathbb R))$, and so defines
    as above an $\Omega^\bullet(A)$-linear map
    $\widehat{\tr}\colon
    \Omega^\bullet(A,\End(E))\to\Omega^\bullet(A)$ that preserves the
    degree.

    Equip $\mathbb R\times M$ as above with the flat $A$-connection
    $\ldr{}\colon \Gamma(A)\times C^\infty(M)\to C^\infty(M)$, and the
    vector bundle $\End(E)$ with the connection induced by
    $\nabla\colon \Gamma(A)\times\Gamma(E)\to\Gamma(E)$. Then the
    induced connection
\[\nabla^{\Hom}\colon \Gamma(A)\times \Gamma(\Hom(\End(E),\mathbb R))\to \Gamma(\Hom(\End(E),\mathbb R))
\]
applied to the trace operator reads
\[ (\nabla^{\Hom}_a\tr)(\phi)=\ldr{\rho(a)}(\tr(\phi))-\tr\left(\nabla_a^{\End}\phi\right)
\]
for $a\in\Gamma(A)$ and $\phi\in\Gamma(\End(E))$.

\begin{lemma}\label{lemma_easy_trace_flat}
  With the choices of connections above,  $\nabla^{\Hom}_a\tr=0$ for all $a\in\Gamma(A)$.
  \end{lemma}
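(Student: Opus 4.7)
The claim reduces to the identity $\tr(\nabla^{\End}_a\phi) = \ldr{\rho(a)}(\tr\phi)$ for all $a\in\Gamma(A)$ and $\phi\in\Gamma(\End(E))$, since the formula recalled just above the lemma gives $(\nabla^{\Hom}_a\tr)(\phi) = \ldr{\rho(a)}(\tr\phi) - \tr(\nabla^{\End}_a\phi)$. My plan is to establish this identity by a local computation in a frame, exploiting only the classical fact that the trace of a commutator of endomorphisms of a finite-dimensional vector space vanishes.

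First I would note that both sides of the desired identity are $C^\infty(M)$-linear in $\phi$, so it is enough to verify it relative to a local frame of $\End(E)$ built from a local frame $(e_1,\ldots,e_r)$ of $E$. In such a frame the connection is encoded by a connection matrix, $\nabla_a e_i = \sum_j \omega^j_i(a)\, e_j$ with $\omega^j_i(a)\in C^\infty(M)$, and any $\phi$ is encoded by its matrix of coefficients $(\phi^j_i)$.

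The key computation is then to unfold $\nabla^{\End}_a\phi = \nabla_a\circ\phi - \phi\circ\nabla_a$ in this frame: a short calculation shows that its $(k,i)$-entry reads $\ldr{\rho(a)}(\phi^k_i) + [\omega(a),\phi]^k_i$, where $[\,\cdot\,,\,\cdot\,]$ denotes the ordinary matrix commutator. Taking the trace and using that $\tr[\omega(a),\phi]=0$, one finds $\tr(\nabla^{\End}_a\phi) = \sum_i \ldr{\rho(a)}(\phi^i_i) = \ldr{\rho(a)}(\tr\phi)$, which is exactly the identity that is needed.

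There is no hard step here; the only small subtlety is to justify that passing to a local frame is legitimate, which is immediate from the $C^\infty(M)$-linearity of each side in $\phi$. A coordinate-free alternative would be to write $\nabla_a = \rho(a) + \omega(a)$ locally, observe that $\nabla^{\End}_a\phi = \rho(a)\cdot\phi + [\omega(a),\phi]$ with $\rho(a)$ acting componentwise, and then combine the fact that $\tr$ commutes with this componentwise action of $\rho(a)$ with its vanishing on commutators.
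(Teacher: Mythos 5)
Your proof is correct and takes essentially the same route as the paper: both reduce the claim to a local-frame computation in which the contribution of the connection coefficients cancels, the paper by evaluating $(\nabla^{\Hom}_a\tr)$ directly on the frame $e_i\otimes\epsilon_j$ of $\End(E)$, and you by packaging the same cancellation as $\tr[\omega(a),\phi]=0$. The $C^\infty(M)$-linearity remark justifying the passage to a frame is fine and matches what the paper implicitly uses.
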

 
  \begin{proof}
    Take a local frame $(e_1,\ldots,e_k)$ of
$E$ over an open set $U\subseteq M$ and consider the dual local frame
$(\epsilon_1,\ldots,\epsilon_k)$ of $E^*$. It is easy to see that for
each $i,j=1,\ldots,k$ and each $a\in\Gamma_U(A)$:
\begin{equation*}
  \begin{split}
    (\nabla^{\Hom}_a\tr)(e_i\otimes \epsilon_j)&=\ldr{\rho(a)}(\delta_{ij})-\tr(\nabla_a^{\End}(e_i\otimes \epsilon_j))=-\sum_{s=1}^k\langle \epsilon_s,\nabla_a^{\End} (e_i\otimes \epsilon_j)(e_s)\rangle\\
    &=-\sum_{s=1}^k\bigl\langle \epsilon_s, \nabla_a(\delta_{js}e_i)-e_i\langle\epsilon_j, \nabla_ae_s\rangle\bigr\rangle\\
    &=-\langle \epsilon_j, \nabla_ae_i\rangle +\langle\epsilon_j,
    \nabla_ae_i\rangle=0.
\end{split}
\end{equation*}
\end{proof}

Lemma \ref{lemma_easy_trace_flat} and \eqref{d_nabla_K_formula} yield
the equality
\begin{equation}\label{tr_dA_formula}
  \dr_A\circ\widehat{\tr}=\widehat{\tr}\circ \dr_{\nabla^{\End}} \colon \Omega^\bullet(A, \End(E))\to \Omega^{\bullet+1}(A).
\end{equation}

  \subsection{$A$-Pontryagin characters of a vector bundle}\label{classical_pont}
  As before, consider a Lie algebroid $A\to M$, and a vector bundle
  $E\to M$ of rank $k$, with a linear $A$-connection
  $\nabla\colon\Gamma(A)\times\Gamma(E)\to\Gamma(E)$.  Let
  $R_\nabla\in\Omega^2(A,\Hom(TM,A))$ be the curvature of $\nabla$.
  
  Define for $i\geq 1$ the form $R_\nabla^i\in\Omega^{2i}(A,\End(E))$
  by
  \[
    \widehat{R_\nabla^i}=\widehat{R_\nabla}^i=\dr_\nabla^{2i}\in\End_{\Omega^\bullet(A)}(\Omega^\bullet(A,E)). \]
  Then \eqref{d_nabla_K_end} shows
  $\widehat{\dr_{\nabla^{\End}} R_\nabla^i}=\left[\dr_\nabla,
    \widehat{R_\nabla^i}\right]=\left[\dr_\nabla,
    \widehat{R_\nabla}^i\right]=\left[\dr_\nabla,
    \dr_\nabla^{2i}\right]=0$, and so with \eqref{tr_dA_formula}:
  \begin{equation}\label{closed_easy}
    \dr_A(\widehat{\rm tr}(R_\nabla^i))=\widehat{\rm tr}(\dr_{\nabla^{\End}}R_\nabla^i)=0.
    \end{equation}
    Therefore, $\widehat{\rm tr}(R_\nabla^i)$ defines a cohomology class in $H^{2i}(A)$.

    \begin{lemma}
      Let $E\to M$ be a vector bundle and let $A\to M$ be a Lie
      algebroid. Then the chomology class
      $\left[\widehat{\rm tr}(R_\nabla^i)\right]\in H^{2i}(A)$ does
      not depend on the choice of $A$-connection $\nabla$ on $E$, for
      $i\geq 1$.
    \end{lemma}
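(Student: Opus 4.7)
The plan is to apply the standard Chern--Weil transgression argument, adapted to the Lie algebroid setting with the formalism of Section~\ref{background_forms}. Given two $A$-connections $\nabla_0$ and $\nabla_1$ on $E$, their difference is $C^\infty(M)$-linear in the $A$-entry, so it defines an element $\alpha := \nabla_1 - \nabla_0 \in \Omega^1(A,\End(E))$. I would connect $\nabla_0$ and $\nabla_1$ by the affine family $\nabla_t := \nabla_0 + t\alpha$, $t \in [0,1]$, and aim to show that $\frac{d}{dt}\widehat{\tr}(R_{\nabla_t}^i) \in \Omega^{2i}(A)$ is $\dr_A$-exact for each $t$; integration over $[0,1]$ then exhibits $\widehat{\tr}(R_{\nabla_1}^i) - \widehat{\tr}(R_{\nabla_0}^i)$ as an exact form, proving the claim.

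The core computation proceeds in three steps. First, since $\dr_{\nabla_t} = \dr_{\nabla_0} + t\widehat{\alpha}$, I get $\frac{d}{dt}\dr_{\nabla_t} = \widehat{\alpha}$; combining this with $\widehat{R_{\nabla_t}} = \dr_{\nabla_t}^2$ and \eqref{d_nabla_K_end} applied with $|\alpha|=1$ yields
$$\tfrac{d}{dt}R_{\nabla_t} \;=\; \dr_{\nabla_t^{\End}}\alpha \;\in\; \Omega^3(A,\End(E)).$$
Second, the Leibniz rule applied to $\widehat{R_{\nabla_t}}^i$ gives a sum of $i$ composed operators. Passing to the graded trace and using the cyclicity $\widehat{\tr}(\omega\wedge\eta)=(-1)^{|\omega|\cdot|\eta|}\widehat{\tr}(\eta\wedge\omega)$ for $\End(E)$-valued forms (with all the resulting signs trivial because $R_{\nabla_t}$ has even form-degree $2$), the sum collapses to
$$\tfrac{d}{dt}\widehat{\tr}(R_{\nabla_t}^i) \;=\; i\cdot \widehat{\tr}\bigl(R_{\nabla_t}^{i-1} \wedge \dr_{\nabla_t^{\End}}\alpha\bigr).$$
Third, the Bianchi identity $\dr_{\nabla_t^{\End}} R_{\nabla_t} = 0$, which follows from $[\dr_{\nabla_t},\dr_{\nabla_t}^2] = 0$ via \eqref{d_nabla_K_end}, implies $\dr_{\nabla_t^{\End}}(R_{\nabla_t}^{i-1} \wedge \alpha) = R_{\nabla_t}^{i-1} \wedge \dr_{\nabla_t^{\End}}\alpha$ by Leibniz, and \eqref{tr_dA_formula} then lets me pull $\dr_{\nabla_t^{\End}}$ out as $\dr_A$, giving
$$\tfrac{d}{dt}\widehat{\tr}(R_{\nabla_t}^i) \;=\; i\cdot \dr_A\,\widehat{\tr}\bigl(R_{\nabla_t}^{i-1} \wedge \alpha\bigr).$$
Integration over $[0,1]$ then produces the explicit transgression primitive $i\int_0^1 \widehat{\tr}(R_{\nabla_t}^{i-1} \wedge \alpha)\,dt$.

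The main technical obstacle is the sign bookkeeping in the graded algebra $\Omega^\bullet(A,\End(E))$: the graded cyclicity of $\widehat{\tr}$, the Leibniz rule for $\dr_{\nabla^{\End}}$, and the cyclic rearrangement of $\widehat{R_{\nabla_t}}^j \circ \widehat{\dr_{\nabla_t^{\End}}\alpha} \circ \widehat{R_{\nabla_t}}^{i-1-j}$ each introduce signs that must be tracked carefully. The key observation is that every exponent $2$ makes $R_{\nabla_t}$ an even form, so each transposition past it contributes a trivial sign, and the classical matrix Chern--Weil computation transports verbatim to the algebroid setting.
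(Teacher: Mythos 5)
Your proposal is correct and follows essentially the same route as the paper, which carries out exactly this affine-homotopy/transgression argument (difference of connections as an $\End$-valued $1$-form, Bianchi identity, trace intertwining $\dr_{\nabla^{\End}}$ with $\dr_A$, integration over $t$) in the more general graded setting of Appendix~\ref{proof_inv_conn} and then specialises. The only slip is a harmless degree typo: $\tfrac{d}{dt}R_{\nabla_t}=\dr_{\nabla_t^{\End}}\alpha$ lives in $\Omega^2(A,\End(E))$, not $\Omega^3(A,\End(E))$.
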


    This proof is standard; in the context of Lie algebroid Pontryagin
    classes, it is due to \cite{Fernandes02} following a classical
    method.  The proof is omitted here, but done later in the more
    general setting of Pontryagin algebras defined by connections up
    to homotopy (see Proposition \ref{inv_conn}, and Appendix
    \ref{proof_inv_conn}); in the same manner as in \cite{Quillen85}
    for superconnections.

  \begin{definition}
    Let $E$ be a vector bundle over $M$ and let $A\to M$ be a Lie
    algebroid.
    \begin{enumerate}
      \item Choose any linear $A$-connection $\nabla$ on $E$. The
    cohomology classes
    $\sigma_A^i(E):=\left[\widehat{\rm tr}(R_\nabla^i)\right]\in H^{2i}(A)$, for
    $i\geq 1$, are called the \textbf{$A$-Pontryagin characters} of $E$.
  \item The \textbf{$A$-Pontryagin algebra} of $E$ is the
    $\R$-subalgebra $\Pont_A^\bullet(E)\subseteq H^\bullet(A)$
    generated by the $A$-Pontryagin characters.
    \end{enumerate}
\end{definition}

The Pontryagin algebra is also called the \emph{characteristic
  algebra} in \cite{Tu17}.  It is easy to see that $\Pont_A^l(E)=0$
for $l$ an odd number.  It is a standard fact that even $\Pont_A^l=0$
for $l$ not divisible by $4$. For completeness, Bott's proof of this
fact \cite{Bott72} is quickly recalled here.  Equip the vector bundle
$E$ with a smooth metric (i.e.~a positive definite fibrewise pairing),
and take the $A$-connection
$\nabla\colon \Gamma(A)\times\Gamma(E)\to\Gamma(E)$ to be
\emph{metric}:
$\langle\nabla_ae, e'\rangle+\langle e, \nabla_ae'\rangle=\ldr{\rho(a)}\langle e, e'\rangle$
for $a\in\Gamma(A)$, $e,e'\in\Gamma(E)$. Then it is easy to check that
$\langle R_\nabla(a,b)e, e'\rangle=-\langle e, R_\nabla(a,b)e'\rangle$
for all $a,b\in\Gamma(A)$, $e,e'\in\Gamma(E)$,
and inductively
\[ \langle R_\nabla^i(a_1,b_1, a_2,b_2, \ldots, a_i,b_i)e, e'\rangle=(-1)^{i}\langle e, R_\nabla^i(a_1,b_1, a_2,b_2, \ldots, a_i,b_i)e'\rangle
\]
for $i\geq 1$.  Then immediately $\widehat{\tr}(R_\nabla^i)=0$ for $i$
odd, and so $\Pont^{2i}_A(E)=0$ for $i$ odd.

\bigskip

Finally, the \textbf{$A$-Pontryagin classes of the vector bundle $E$}
can be defined; see e.g.~\cite{Tu17} for detailed explanations.  Consider
$\operatorname{Gl}(k,\R)$-invariant polynomial functions
$p\colon \lie{gl}(k,\R)\to\R$, i.e.~such that for all
$g\in \operatorname{Gl}(k,\R)$ and $X\in \lie{gl}(k,\R)$
\[p(gXg\inv)=p(X).
\]
The $\operatorname{Gl}(k,\R)$-invariant polynomials on
$\lie{gl}(k,\R)$ form an $\R$-algebra, which is generated as an
$\R$-algebra by the polynomials $\Sigma_0,\Sigma_1,\ldots$ defined by
\[ \Sigma_i(X)=\operatorname{trace}(X^i)
\]
for all $X\in\lie{gl}(k,\R)$ (see for instance \cite{Bott72}).
Each of these polynomials defines the
cohomology class 
\begin{equation}\label{generators}
  \sigma^i_A(E):=\left[\Sigma_i(R_\nabla)\right]:=\left[\widehat{\tr}(R_\nabla^i)\right] \in H^{2i}(A),
\end{equation}
called the \textbf{$i$-th Pontryagin character of $E$}.  As a
consequence, each $\operatorname{Gl}(k,\R)$-invariant polynomial $p$
on $\lie{gl}(k,\R)$ defines a closed form
$p(R_\nabla)\in \Omega^\bullet(A)$ and an element
$[p(R_\nabla)]\in H^\bullet(A)$. More precisely, if
$p=q(\Sigma_{i_1},\ldots,\Sigma_{i_l})\in
\R[\Sigma_1,\Sigma_2,\ldots]$, then
\[p(R_\nabla)=q(\Sigma_{i_1}(R_\nabla), \ldots,\Sigma_{i_l}(R_\nabla)).
\]
For instance, $p=\Sigma_2-(\Sigma_1)^2$ gives
$p(R_\nabla)=\widehat{\tr}(R_\nabla^2)-\widehat{\tr}(R_\nabla)\wedge
\widehat{\tr}(R_\nabla)$.  This defines
the Chern-Weil morphism of $\R$-algebras
\[ \rm{cw}_A(E)\colon
  \operatorname{Sym}^\bullet(\lie{gl}(k,\R))^{\operatorname{Gl}(k,\R)}\to
  H^{2\bullet}(A), \qquad p\mapsto \left[p(R_\nabla)\right].
  \]
  The $\R$-subalgebra $\Pont_A^\bullet(E)\subseteq H^\bullet(A)$ is
  the image of this morphism, i.e.~the subalgebra of all cohomology
  classes $[p(R_\nabla)]$ defined by
  $\operatorname{Gl}(k,\R)$-invariant polynomial $p$ on
  $\lie{gl}(k,\R)$.

For $i$ a positive integer, 
the characteristic polynomial
\begin{equation}\label{def_ps}
  \det\left(\lambda\cdot I_k  +X\right)=\sum_{i=0}^kf_{i}(X)\lambda^{k-i}
\end{equation} 
defines homogeneous polynomials $f_{i}$ of degree $i$ on
$\mathfrak{gl}(k,\R)$, for $k\geq i\geq 0$. These polynomials are
obviously $\operatorname{Gl}(k,\R)$-invariant, and so for each
$i\geq 1$, the \textbf{$i$-th $A$-Pontryagin class of $E$} can be defined as
\[ p^{i}_A(E):=\left[f_{2i}\left(\frac{i}{2\pi}R_\nabla\right)\right]\in H^{4i}(A),
\]
for any choice of connection
$\nabla\colon\Gamma(A)\times\Gamma(E)\to\Gamma(E)$. The $A$-Pontryagin
classes of $E$ generate together $\Pont_A^\bullet(E)$ (see for
instance \cite{Bott72}).  The \textbf{total $A$-Pontryagin class of
  $E$} is defined by
\[ p_A(E)=\left[
    \det\left(I_k+\frac{i}{2\pi}R_\nabla\right)\right]=1+p_A^1(E)+p_A^2(E)+\ldots
  +p^{\lfloor \frac{k}{2}\rfloor}\in \Pont^\bullet_A(E).
  \]

  \begin{remark}\label{imp_rem}
    Given an ordinary linear connection
    $\nabla\colon\mx(M)\times \Gamma(E)\to\Gamma(E)$ on a vector
    bundle $E$ of rank $k$, a Lie algebroid $A\to M$ defines
    a linear $A$-connection $\nabla^A\colon\Gamma(A)\times\Gamma(E)\to\Gamma(E)$ by $\nabla^A_ae=\nabla_{\rho(a)}e$.
    It is easy to see that
    \[ [p(R_{\nabla^A})]=\rho^\star[p(R_\nabla)]\in H^\bullet(A)
    \]
    for any $\operatorname{Gl}(k,\R)$-invariant polynomial $p$ on
    $\lie{gl}(k,\R)$. Here, $\rho^\star$ is the cochain map
    \[\rho^\star\colon (\Omega^\bullet(M),
    \dr)\to(\Omega^\bullet(A),\dr_A),\]
    $\rho^\star(\omega)(a_1,\ldots,
    a_s)=\omega(\rho(a_1),\ldots,\rho(a_s))$ for
    $\omega\in\Omega^s(M)$ and $a_1,\ldots,a_s\in\Gamma(A)$.

    As observed by Fernandes in \cite{Fernandes02}, this yields
    $\Pont^\bullet_A(E)=\rho^\star(\Pont^\bullet(E))$,
    or more precisely $\rm{cw}_A(E)=\rho^\star\circ \rm{cw}(E)$.
    \end{remark}

  \section{Bott's vanishing theorem in a more general setting}\label{Bott_sec}
  This section rephrases Bott's proof of the vanishing Pontryagin
  classes of the normal bundle to an involutive subbundle of the
  tangent \cite{Bott72}. Since the decisive object is a flat
  $F_M$-connection on a smooth vector bundle $TM/F_M$, that can be
  extended to a linear $TM$-connection in order to define Pontryagin
  characters or classes, one can easily prove a similar result for the existence of
  a flat partial connection on a smooth vector bundle. Further, the
  construction is adapted to the more general $A$-Pontryagin classes
  of a vector bundle $E$.

\subsection{Bott's vanishing theorem}
Let $A$ be a Lie algebroid over a smooth manifold $M$, and let $B$ be
a subalgebroid of $A$ over $M$.  Let $n$ be the rank of $A$, and $l$
be the rank of $B$.  Set $q:=n-l$, the rank of $A/B$, or of the
annihilator $B^\circ$ of $B$. Let $E$ be a smooth vector bundle over
$M$, with a \emph{flat $B$-connection $\nabla$}.  It is not difficult
to see that $\nabla$ can be \emph{extended} to an $A$-connection
$\tilde\nabla\colon\Gamma(A)\times\Gamma(E)\to\Gamma(E)$, satisfying
\begin{equation}\label{nabla_ext}
  \tilde\nabla_be=\nabla_be
\end{equation}
for all $b\in\Gamma(B)$ and $e\in\Gamma(E)$.

\medskip

Define the space $I^\bullet(B)\subseteq \Omega^\bullet(A)$ as the
ideal in $\Omega(A)$ generated by the $1$-forms vanishing on $B$. That
is, it is generated by the sections of the \emph{annihilator}
$B^\circ\subseteq A^*$ of $B$. It is explicitly given by
$I^0(B)=\{0\}\subseteq \Omega^0(A)=C^\infty(M)$ and 
\[  I^r(B)=\{\omega\in\Omega^r(A)\mid \omega(b_1,\ldots,b_r)=0 \text{ for all } b_1,\ldots,b_r\in\Gamma(B)
  \}
\]
for $r\geq 1$.

Choose an open set $U\subseteq M$ trivialising $A$ and $B$. That is,
there is a smooth frame $(a_1,\ldots,a_n)$ for $A$ over $U$ such that
$(a_{q+1},\ldots,a_n)$ is a smooth frame for $B$. Consider the dual
frame $(\alpha_1,\ldots,\alpha_n)$ of $A^*$ over $U$. By construction,
$(\alpha_{1},\ldots,\alpha_q)$ is a smooth frame for $B^\circ$ over
$U$.  Since $I^\bullet(B)$ is generated as an ideal by $\Gamma(B^\circ)$, for
$r\geq 1$, an element $\omega$ of $I^r_U(B)$ can be written as
\[\omega=\sum_{i=1}^q\omega_i\wedge\alpha_i 
\]
with $\omega_i\in\Omega^{r-1}_U(A)$. Therefore, since $B^\circ$ has
rank $q$, the wedge product
\[(I^\bullet(B))^{q+1}=\underset{ q+1 \text{
    times}}{\underbrace{I^\bullet(B)\wedge\ldots\wedge I^\bullet(B)}}\]
must necessarily vanish.

It is  now easy to see that \eqref{nabla_ext} implies
\[R_{\tilde\nabla}(b,b')e=R_{\nabla}(b,b')e= 0\] for
$b,b'\in\Gamma(B)$ and all $e\in\Gamma(E)$, and so
$R_{\tilde\nabla}\in I^2(B)\otimes_{C^\infty(M)}\Gamma(\End(E))$.
This implies $R_{\tilde\nabla}^i\in (I^2(B))^i\otimes_{C^\infty(M)}\Gamma(\End(E))$ and so
$\widehat{\tr}(R_{\tilde\nabla}^i)\in (I^2(B))^i$.
More generally, for $p$ a $\operatorname{Gl}(k,\R)$-invariant
polynomial of degree $d$ on $\lie{gl}(k,\R)$, the $2d$-form
$p(R_\nabla)\in\Omega^{2d}(A)$ is an element of $(I^2(B))^d$ and so
$p(R_\nabla)=0$ for $d>q$.

As a summary, this section has proved the following result.
\begin{theorem}\label{main}
  Let $E$ be a smooth vector bundle over a smooth manifold $M$ and let
  $A$ be a Lie algebroid over $M$.  If there exists a Lie subalgebroid
  $B$ of $A$ of codimension $q$ with a linear representation
  $\nabla\colon \Gamma(B)\times\Gamma(E)\to\Gamma(E)$, then the Pontryagin spaces
  \[\Pont^l_A(E)\subseteq H^l(A)
  \]
  are all trivial for $l>2q$.
\end{theorem}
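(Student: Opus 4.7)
The plan is to reduce to a classical Bott-style argument by choosing an extension of the flat $B$-connection to a full $A$-connection and exploiting the nilpotency of the annihilator ideal. First I would choose a vector bundle splitting $A\simeq B\oplus C$ where $C$ is a complement of rank $q$, and extend $\nabla$ to an $A$-connection $\tilde\nabla$ on $E$ by declaring $\tilde\nabla_ce=0$ for $c\in\Gamma(C)$ and $e\in\Gamma(E)$ (any extension compatible with $\nabla$ on $B$ will do; see \eqref{nabla_ext}). The Pontryagin characters of $E$ computed with $\tilde\nabla$ represent the same classes in $H^\bullet(A)$ as those computed with any other $A$-connection, so we may freely use this particular $\tilde\nabla$.

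Next I would study the ideal $I^\bullet(B)\subseteq\Omega^\bullet(A)$ consisting of forms that vanish when evaluated on sections of $B$. Locally, over an open set trivialising both $A$ and $B$, this ideal is generated by a frame $(\alpha_1,\ldots,\alpha_q)$ of $B^\circ$, so every element of $I^\bullet(B)$ is a combination of terms with at least one $\alpha_i$ factor. Because $B^\circ$ has rank $q$, any wedge of $q+1$ generators must contain a repeated $\alpha_i$, hence
\[
(I^\bullet(B))^{q+1}=\underbrace{I^\bullet(B)\wedge\ldots\wedge I^\bullet(B)}_{q+1}=0.
\]
This nilpotency of the ideal is the key combinatorial input.

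Then I would observe that since $\tilde\nabla$ agrees with $\nabla$ on $B$ and $\nabla$ is flat, $R_{\tilde\nabla}(b,b')=0$ for all $b,b'\in\Gamma(B)$. Therefore $R_{\tilde\nabla}\in I^2(B)\otimes_{C^\infty(M)}\Gamma(\End(E))$, and by $C^\infty(M)$-multilinearity, $R_{\tilde\nabla}^{i}\in (I^2(B))^{i}\otimes_{C^\infty(M)}\Gamma(\End(E))$. Applying $\widehat{\tr}$ gives $\widehat{\tr}(R_{\tilde\nabla}^{i})\in (I^2(B))^{i}\subseteq I^{2i}(B)$. More generally, for any $\operatorname{Gl}(k,\R)$-invariant polynomial $p$ of degree $d$ on $\lie{gl}(k,\R)$, writing $p$ as a polynomial in the $\Sigma_j$ shows that $p(R_{\tilde\nabla})$ is a sum of wedge products of $d$ trace expressions, each in $I^2(B)$, so $p(R_{\tilde\nabla})\in (I^\bullet(B))^d$.

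Finally, combining the two, whenever $d\geq q+1$ we have $p(R_{\tilde\nabla})\in (I^\bullet(B))^{q+1}=0$, so every class in $\Pont^{2d}_A(E)$ vanishes as soon as $2d>2q$. Since the Pontryagin spaces in odd degree are already trivial, this yields $\Pont^l_A(E)=0$ for all $l>2q$. The only subtlety I foresee is keeping careful track of the rank bookkeeping for $(I^\bullet(B))^{q+1}=0$: one must verify this locally using the frame argument and check that the estimate is independent of the trivialising chart, but this is routine given that the ideal is globally defined.
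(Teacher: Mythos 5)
Your proposal is correct and follows essentially the same route as the paper: extend the flat $B$-connection to an $A$-connection, note that the curvature then lies in $I^2(B)\otimes_{C^\infty(M)}\Gamma(\End(E))$, and use the nilpotency $(I^\bullet(B))^{q+1}=0$ of the ideal generated by $\Gamma(B^\circ)$ to kill all invariant polynomials of degree exceeding $q$. The only cosmetic difference is that you fix a specific extension via a complement $C$, whereas the paper works with an arbitrary extension satisfying \eqref{nabla_ext}; both are justified by the independence of the Pontryagin characters on the choice of $A$-connection.
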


Using Remark \ref{imp_rem}, this yields the following obstruction
result in terms of the classical Pontryagin spaces of $E$.
\begin{corollary}
  Let $E$ be a smooth vector bundle over a smooth manifold $M$ and let
  $A$ be a Lie algebroid over $M$.  If there exists a Lie subalgebroid
  $B$ of $A$ of codimension $q$ with a linear representation
  $\nabla\colon \Gamma(B)\times\Gamma(E)\to\Gamma(E)$, then the Pontryagin spaces
  \[\Pont^l(E)\subseteq H^l(M)
  \]
  all lie in the kernel of $\rho^\star\colon H^\bullet(M)\to H^\bullet(A)$ for $l>2q$.
\end{corollary}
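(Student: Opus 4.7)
The plan is to derive this corollary as an immediate consequence of Theorem \ref{main} combined with the compatibility of the Chern--Weil construction with the anchor map recorded in Remark \ref{imp_rem}. The hypotheses are identical to those of Theorem \ref{main}, so that theorem already gives $\Pont^l_A(E) = 0 \subseteq H^l(A)$ for all $l > 2q$. The task is therefore purely to translate this vanishing statement from $H^\bullet(A)$ to a statement about classes in $H^\bullet(M)$ under the pullback $\rho^\star$.

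Concretely, first I would pick an arbitrary ordinary linear connection $\nabla^E\colon \mx(M) \times \Gamma(E) \to \Gamma(E)$ on $E$ and form the associated $A$-connection $(\nabla^E)^A_a e := \nabla^E_{\rho(a)} e$ as in Remark \ref{imp_rem}. Then for any $\operatorname{Gl}(k,\R)$-invariant polynomial $p$ on $\lie{gl}(k,\R)$ of homogeneous degree $d$, the remark supplies the identity $[p(R_{(\nabla^E)^A})] = \rho^\star [p(R_{\nabla^E})] \in H^{2d}(A)$, so that $\rho^\star$ sends $\Pont^\bullet(E)$ into $\Pont^\bullet_A(E)$ and in fact surjects onto it.

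Next I would invoke Theorem \ref{main}: since $B$ admits a linear representation on $E$, one has $\Pont^l_A(E) = 0$ whenever $l > 2q$. Combining this with the previous identity, for any class $[p(R_{\nabla^E})] \in \Pont^l(E)$ with $l > 2q$, the image $\rho^\star [p(R_{\nabla^E})] = [p(R_{(\nabla^E)^A})]$ lies in $\Pont^l_A(E) = 0$, hence vanishes. Since every element of $\Pont^l(E)$ is represented in this way by some invariant polynomial, this means $\Pont^l(E) \subseteq \ker\bigl(\rho^\star \colon H^l(M) \to H^l(A)\bigr)$ for all $l > 2q$, which is exactly the claim.

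There is essentially no obstacle: the entire content of the corollary is the factorisation of the Chern--Weil map $\operatorname{cw}_A(E) = \rho^\star \circ \operatorname{cw}(E)$ noted by Fernandes, which upgrades the $A$-cohomological vanishing from Theorem \ref{main} into the desired kernel statement on de Rham cohomology. The only point worth a brief sentence in the write-up is that it suffices to check the kernel condition on generators of $\Pont^\bullet(E)$ (e.g.\ the characteristic classes $\sigma^i(E) = [\widehat{\tr}(R_{\nabla^E}^i)]$ of degree $2i > 2q$, together with their products lying in degree greater than $2q$), since $\rho^\star$ is a morphism of $\R$-algebras.
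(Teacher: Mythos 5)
Your proposal is correct and is exactly the paper's argument: the corollary is stated as an immediate consequence of Theorem \ref{main} together with the identity $\operatorname{cw}_A(E)=\rho^\star\circ\operatorname{cw}(E)$ (equivalently $\Pont^\bullet_A(E)=\rho^\star\Pont^\bullet(E)$) from Remark \ref{imp_rem}. Your closing observation that it suffices to treat arbitrary invariant polynomials $p$ of the relevant degree, so that products of lower-degree generators are also covered, is the right point to make explicit.
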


If a Lie algebroid $A$ has a subalgebroid $B$ of
codimension $q$; then $B$ is
represented on $A/B$ via the flat Bott-connection
\[\nabla^B\colon \Gamma(B)\times\Gamma(A/B)\to\Gamma(A/B), \quad
  \nabla^B_b\bar a=\overline{[b,a]}.
\]
Hence $\Pont^l_A(A/B)\subseteq H^l(A)$ is trivial for $l>2q$. This
yields obstructions to a subalgebroid structure on $B\subseteq A$ of
codimension $q$.

However, in the case $A=TM$ and $B=F_M$, the algebroid $F_M$ is in
fact more than just a subalgebroid: it carries as well an infinitesimal ideal
system \cite{JoOr14}. The goal of this paper is the generalisation of Bott's
vanishing theorem \cite{Bott72} as a statement on ideals.

\subsubsection{Massey products}\label{massey1}
As already emphasised in \cite{Bott72}, Theorem \ref{main} shows more
than the vanishing of the Pontryagin classes $p_A^l(E)$ for $l>2q$. It
shows the vanishing of all $A$-characteristic classes of $E$ defined
by invariant polynomials of degree $d>q$.  In \cite{Bott72}, Bott
follows an idea of Shulman in order to refine his theorem and express
this fact. For completeness, this is quickly discussed here in the
more general setting of this paper.

     Let $A\to M$ be a Lie algebroid and
     $[\alpha], [\beta], [\gamma]\in H^\bullet(A)$ be classes such
     that
\[ [\alpha]\wedge[\beta]=0 \quad \text{ and } \quad [\beta]\wedge[\gamma]=0.
\]
Then $\alpha\wedge\beta=\dr_A\omega$ and
$\beta\wedge\gamma=(-1)^{|\alpha|}\dr_A\eta$ for some forms $\omega$
and $\eta\in\Omega^\bullet(A)$. As a consequence,
$\dr_A(\omega\wedge\gamma)=\alpha\wedge\beta\wedge\gamma=\dr_A(\alpha\wedge\eta)$,
which shows that the class
\[
  \langle [\alpha], [\beta], [\gamma]\rangle:=[\omega\wedge
  \gamma-\alpha\wedge\eta]\in H^\bullet(A)\] is defined. As mentioned
in \cite{Bott72}, this is called the \textbf{Massey triple product}
\cite{Massey58} of $[\alpha], [\beta], [\gamma]\in H^\bullet(A)$; it
is well-defined up to an element of
$I^\bullet([\alpha],[\gamma])\subseteq H^\bullet(A)$, the ideal
generated by $[\alpha]$ and $[\gamma]$.  

Consider the situation of Theorem \ref{main} and take any three
classes $[\alpha]$, $[\beta]$ and $[\gamma]$ in $\Pont^\bullet_A(E)$
such that $|\alpha|+|\beta|>2q $ and $|\beta|+|\gamma|>2q$.  Then
$\alpha, \beta,\gamma\in\Omega^\bullet(A)$ can be chosen
$\alpha=p_\alpha(R_\nabla )$, $\beta=p_\beta(R_\nabla )$ and
$\gamma=p_\gamma(R_\nabla )$ for $\nabla$ as in the proof of Theorem
\ref{main} and $p_\alpha,p_\beta, p_\gamma$
$\operatorname{Gl}(k,\R)$-invariant polynomials on $\lie{gl}(k,\R)$ of
degrees $|\alpha|/2$, $|\beta|/2$ and $|\gamma|/2$, respectively --
where $k$ is the rank of $E$. Then by definition,
$\alpha\wedge\beta=(p_\alpha\cdot p_\beta)(R_\nabla)$, which must
vanish by the proof of Theorem \ref{main} and $|\alpha|+|\beta|>2q $,
and in the same manner $\beta\wedge\gamma=0$.  Then by definition,
$\langle [\alpha], [\beta], [\gamma]\rangle=0$.  This proves the
following theorem, which is attributed to Shulman in \cite{Bott72}.

\begin{theorem}
  Let $E$ be a smooth vector bundle over a smooth manifold $M$ and let
  $A$ be a Lie algebroid over $M$.  If there exists a Lie subalgebroid
  $B$ of $A$ of codimension $q$ with a linear representation
  $\nabla\colon \Gamma(B)\times\Gamma(E)\to\Gamma(E)$, then for all
  $[\alpha]$, $[\beta]$ and $[\gamma]$ in $\Pont^\bullet_A(E)$ such
  that $|\alpha|+|\beta|>2q $ and $|\beta|+|\gamma|>2q$,
  \[ \langle [\alpha], [\beta], [\gamma]\rangle=0.
    \]
  \end{theorem}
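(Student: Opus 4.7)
The plan is to follow Bott's strategy and to leverage the construction already used in the proof of Theorem~\ref{main}. I would begin by choosing an $A$-connection $\tilde\nabla$ on $E$ that extends the flat $B$-representation $\nabla$, exactly as in \eqref{nabla_ext}. The decisive feature of this choice is that $R_{\tilde\nabla}\in I^2(B)\otimes_{C^\infty(M)}\Gamma(\End(E))$, so that for any $\operatorname{Gl}(k,\R)$-invariant polynomial $p$ of degree $d$ on $\lie{gl}(k,\R)$ one has $p(R_{\tilde\nabla})\in (I^2(B))^d$. Since $(I^\bullet(B))^{q+1}=0$, this already forces $p(R_{\tilde\nabla})=0$ whenever $d>q$, and this is the sharper statement on the level of forms (not just cohomology classes) that powers the argument.

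Next I would exploit the fact that the Chern--Weil morphism $\operatorname{cw}_A(E)$ is an $\R$-algebra homomorphism whose image is exactly $\Pont^\bullet_A(E)$. Consequently, every homogeneous class in $\Pont^\bullet_A(E)$ admits a representative of the form $p(R_{\tilde\nabla})$ for some homogeneous invariant polynomial $p$. I would therefore pick invariant polynomials $p_\alpha,p_\beta,p_\gamma$ on $\lie{gl}(k,\R)$ of respective degrees $|\alpha|/2,\,|\beta|/2,\,|\gamma|/2$ and set $\alpha=p_\alpha(R_{\tilde\nabla})$, $\beta=p_\beta(R_{\tilde\nabla})$, $\gamma=p_\gamma(R_{\tilde\nabla})$ as closed representatives in $\Omega^\bullet(A)$.

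The core calculation is then immediate from the multiplicativity of Chern--Weil: $\alpha\wedge\beta=(p_\alpha\cdot p_\beta)(R_{\tilde\nabla})$, and $p_\alpha\cdot p_\beta$ is an invariant polynomial of degree $(|\alpha|+|\beta|)/2>q$. By the first step this form \emph{vanishes identically on $A$}, not merely in cohomology; the same argument gives $\beta\wedge\gamma=0$. Hence one may take the primitives $\omega=0$ with $\dr_A\omega=\alpha\wedge\beta$ and $\eta=0$ with $(-1)^{|\alpha|}\dr_A\eta=\beta\wedge\gamma$ when forming the Massey triple product. The corresponding representative $\omega\wedge\gamma-\alpha\wedge\eta$ is then identically zero, so $0$ lies in the coset defining $\langle[\alpha],[\beta],[\gamma]\rangle$.

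The main subtlety is conceptual rather than computational: since the Massey triple product is only defined up to the ideal $I^\bullet([\alpha],[\gamma])\subseteq H^\bullet(A)$, one must make clear that the statement $\langle[\alpha],[\beta],[\gamma]\rangle=0$ means that $0$ is a bona fide representative of the indeterminacy coset, which is the standard meaning of vanishing for Massey products. Beyond this, no new machinery is needed beyond the form-level statement extracted from Theorem~\ref{main}.
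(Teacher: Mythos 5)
Your proposal is correct and follows essentially the same route as the paper: extend the flat $B$-connection as in the proof of Theorem~\ref{main}, represent the classes by $p_\alpha(R_{\tilde\nabla})$, $p_\beta(R_{\tilde\nabla})$, $p_\gamma(R_{\tilde\nabla})$, and use the form-level vanishing $(I^\bullet(B))^{q+1}=0$ to conclude that $\alpha\wedge\beta$ and $\beta\wedge\gamma$ vanish identically, so that $0$ is a representative of the Massey triple product. Your explicit remark about the indeterminacy coset is a welcome clarification of what the paper leaves implicit.
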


\subsection{Reducible vector bundles -- a short discussion}\label{reductible_vb}

  Consider a fibration of vector bundles
  \begin{equation*}
\begin{xy}
  \xymatrix{
    E\ar[d]_{q_E}\ar[r]^{\phi}&E'\ar[d]^{q_{E'}}\\
    M\ar[r]_{f}&M' }
\end{xy}
\end{equation*}
i.e.~a fibrewise surjective vector bundle morphism $\phi$ over a smooth
surjective submersion $f$. Assume that $E$ and $E'$ have the same
rank, so that $\phi$ restricted to each fibre is a bijection.  If $f$
has connected fibres, then $M'$ can be identified with the leaf space
of the involutive subbundle $T^fM:=\ker(Tf)\subseteq TM$ and the
morphism $\phi$ defines a flat $T^fM$-connection \cite{JoOr14}
$\nabla\colon \Gamma(T^fM)\times\Gamma(E)\to \Gamma(E)$ by
\[
  \nabla_Xe=0 \,\text{ for all } \, X\in\Gamma(T^fM) \quad
  :\Leftrightarrow \quad \exists\, e'\in\Gamma(E'): \phi\circ e=e'\circ
  f.\] That is, the $\nabla$-flat sections of $E$ are the sections of
$E$ that are $\phi$-projectable to sections of $E'$.

Conversely, consider a smooth vector bundle $E\to M$, an involutive
subbundle $F_M\subseteq TM$ and a flat connection
$\nabla\colon\Gamma(F_M)\times\Gamma(E)\to\Gamma(E)$.  If $F_M$ is
simple and $\nabla$ has no holonomy, then they induce a fibration of
vector bundles \cite{JoOr14}
\begin{equation*}
\begin{xy}
  \xymatrix{
    E\ar[d]_{q_E}\ar[r]^{\pi}&E/\nabla\ar[d]^{[q_{E}]}\\
    M\ar[r]_{\pi_M}&M/F_M }
\end{xy}
\end{equation*}
where $E/\nabla$ is the quotient of $E$ by parallel transport.

\medskip We say that $E$ is $q$-reducible if there is a fibration of
vector bundles
  \begin{equation*}
  \begin{xy}
    \xymatrix{
      E\ar[d]_{q_E}\ar[r]^{\phi}&E'\ar[d]^{q_{E'}}\\
      M\ar[r]_{f}&M' }
\end{xy}
\end{equation*}
such that
$\dim M'=q$ and
$\rank E=\rank E'$. Then the Pontryagin classes of $E$ of degree
greater than $q$ must necessarily vanish. This is because the Gauss
map $g_E$ of $E$ then factors as $g_E=g_{E'}\circ f$, and so
$\Pont^\bullet(E)=f^*\Pont^\bullet(E')$.  Therefore, in that case,
Bott's vanishing theorem (Theorem \ref{main}) is satisfied even with
$q$ as lower bound.

Pontryagin classes are invariants of a vector bundle, that vanish if
it is trivializable.  In particular, the Pontryagin classes of $E$ of
rank $k$ all vanish if there is a smooth morphism of vector bundles
\begin{equation*}
\begin{xy}
  \xymatrix{
    E\ar[d]_{q_E}\ar[r]^{\pi}&\R^k\ar[d]\\
    M\ar[r]&\{{\rm pt}\} }
\end{xy}
\end{equation*}
that restricts to an isomorphism on each fibre.  The consideration
above shows that much finer geometrical information can be extracted
from Pontryagin classes, and that they could be seen as obstructions
to (constant rank) fibrations to low dimensional manifolds. For
instance, if $\Pont^l(E)\neq \{0\}$ for some $l\geq 4$, then the
vector bundle $E$ is not $1$-reducible, and if $\Pont^l(E)\neq \{0\}$
for some $l>4$, then the vector bundle $E$ is not $2$-reducible, etc.

\subsection{Bott's vanishing theorem and the Atiyah class}
If $E\to M$ has a flat $F_M$-connection, but $F_M$ is not simple or
the holonomy of $\nabla$ is not trivial, then the vector bundle $E$
still is ``infinitesimally symmetric along $F_M$'', but we can only
prove Bott's vanishing theorem with lower bound $2q$. However,
following ideas by Molino \cite{Molino71a} (see also \cite{KaTo75}),
Theorem \ref{main} holds with the lower bound $q$ instead of $2q$ if
the \emph{Atiyah class} of the connection vanishes. On the other hand,
the new, more general version of Bott's vanishing theorem in Theorem
\ref{main}, might be useful in the search for examples where $E$ has a
flat $F_M$-connection, with $F_M$ of codimension $q$, but  its
$k$-th Pontryagin class does not vanish for some $k>\frac{q}{2}$.

\medskip

Let $A$ be a Lie algebroid over a smooth manifold $M$, and let $B$ be
a subalgebroid of $A$ over $M$, of codimension $q$. Let $E$ be a
smooth vector bundle over $M$, as before with a flat $B$-connection
$\nabla$.  Take again an extension
$\tilde\nabla\colon\Gamma(A)\times\Gamma(E)\to\Gamma(E)$ of $\nabla$
as in \eqref{nabla_ext}. Then the form
$\omega_{\tilde\nabla}\in\Omega^1(B,\Hom(A/B,\End(E)))$ is defined by 
\[\omega_{\tilde\nabla}(b,\bar a)(e)=R_{\tilde\nabla}(b,a)e.
\]
The flat $B$-connection $\nabla$ on $E$ and the flat Bott-connection
$\nabla^{B}\colon\Gamma(B)\times\Gamma(A/B)\to\Gamma(A/B)$ combine to
a flat $B$-connection $\nabla^{\Hom}$ on $\Hom(A/B,\End(E))$, and\linebreak
$\dr_{\nabla^{\Hom}}\omega_{\tilde \nabla}=0$
\cite{Molino71a,ChStXu16,Jotz18e}.

The class
$\alpha_\nabla=[\omega_{\tilde\nabla}]\in H^1(B,\Hom(A/B,\End(E)))$ is
called the \textbf{Atiyah class of the representation of $B\subseteq A$
  on $E$}. It does not depend on the choice of the extension, and it is
zero if and only if there is an extension $\tilde\nabla$ such that
$R_{\tilde\nabla}(b,a)=0$ for all $b\in\Gamma(B)$ and all
$a\in\Gamma(A)$ \cite{Molino71a,ChStXu16,Jotz18e}. That is,
$\alpha_\nabla=0$ if and only if there is an extension $\tilde\nabla$
such that
$R_{\tilde\nabla}\in\Gamma(\wedge^2B^\circ\otimes\End(E))$. 

Then for all $l\geq 1$ the form
$\widehat{\tr}(R_{\tilde\nabla}^l)$ is a section of
$\wedge^{2l}B^\circ$ and so $\widehat{\tr}(R_{\tilde\nabla}^l)=0$ for
$2l>q$. This shows the following theorem.

\begin{theorem}\label{with_atiyah}
  Let $E$ be a smooth vector bundle over a smooth manifold $M$ and let
  $A$ be a Lie algebroid over $M$.  If there exists a Lie subalgebroid
  $B$ of $A$ of codimension $q$ with a linear representation
  $\nabla\colon \Gamma(B)\times\Gamma(E)\to\Gamma(E)$ with vanishing
  Atiyah class $\alpha_\nabla\in H^1(B,\Hom(A/B,\End(E)))$, then the
  Pontryagin spaces
  \[\Pont^l_A(E)\subseteq H^l(A)
  \]
  are all trivial for $l>q$.
\end{theorem}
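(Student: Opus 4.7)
The plan is to refine the proof of Theorem \ref{main} by exploiting the vanishing of the Atiyah class $\alpha_\nabla$ in order to sharpen the rank bound from $2q$ down to $q$. The improvement is entirely concentrated in the choice of extension $\tilde\nabla$ of $\nabla$: for a generic extension the curvature $R_{\tilde\nabla}$ only lies in $I^2(B)\otimes_{C^\infty(M)}\Gamma(\End(E))$, but the hypothesis $\alpha_\nabla = 0$ will force it into the strictly smaller subspace $\Gamma(\wedge^2 B^\circ\otimes\End(E))$.

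First I would invoke the characterisation of the Atiyah class from \cite{Molino71a,ChStXu16,Jotz18e} recalled in the paragraph preceding the statement: since $\alpha_\nabla = 0$, one can choose an $A$-connection $\tilde\nabla\colon\Gamma(A)\times\Gamma(E)\to\Gamma(E)$ extending $\nabla$ whose curvature satisfies $R_{\tilde\nabla}(b,a) = 0$ for all $b\in\Gamma(B)$ and $a\in\Gamma(A)$. Viewing $B^\circ \subseteq A^*$ and wedging inside $\wedge^\bullet A^*$, this means $R_{\tilde\nabla} \in \Gamma(\wedge^2 B^\circ\otimes\End(E))$.

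Since $\Gamma(\wedge^\bullet B^\circ)\subseteq\Omega^\bullet(A)$ is closed under the wedge product, iterating yields $R_{\tilde\nabla}^l \in \Gamma(\wedge^{2l}B^\circ\otimes\End(E))$; more generally, for any $\operatorname{Gl}(k,\R)$-invariant polynomial $p$ of degree $d$ on $\lie{gl}(k,\R)$, the closed form $p(R_{\tilde\nabla})$ lies in $\Gamma(\wedge^{2d}B^\circ)\subseteq\Omega^{2d}(A)$. Because $B^\circ$ has rank $q$, the bundle $\wedge^r B^\circ$ vanishes for $r > q$, so $p(R_{\tilde\nabla}) = 0$ as soon as $2d > q$. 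Combining this with the standard vanishing of Pontryagin spaces in odd degrees yields $\Pont^l_A(E) = 0$ for every $l > q$. The only non-mechanical ingredient is the first step, where the vanishing of the Atiyah class is converted into an extension with curvature in the small bundle $\wedge^2 B^\circ\otimes\End(E)$; once this is granted, the remainder is a direct rank-counting argument identical to the one used in Theorem \ref{main}.
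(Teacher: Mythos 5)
Your argument is correct and coincides with the paper's own proof: both use the vanishing of $\alpha_\nabla$ to choose an extension $\tilde\nabla$ with $R_{\tilde\nabla}(b,a)=0$ for all $b\in\Gamma(B)$, $a\in\Gamma(A)$, so that $R_{\tilde\nabla}\in\Gamma(\wedge^2B^\circ\otimes\End(E))$, and then conclude by the rank count $\wedge^rB^\circ=0$ for $r>q$. The only (harmless) difference is that you phrase the final step for arbitrary invariant polynomials $p$ of degree $d$, whereas the paper writes it for the trace powers $\widehat{\tr}(R_{\tilde\nabla}^l)$; both suffice to kill $\Pont^l_A(E)$ for $l>q$.
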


If $A=TM$, $B=F_M$, and $\nabla$ is defined by a fibration to a vector
bundle over $M/F_M$ as in the previous section, then the Atiyah class
$\alpha_\nabla$ vanishes (see \cite{Jotz18e}).  With Section
\ref{reductible_vb}, this yields the following corollary.
\begin{corollary}
  Let $E$ be a smooth vector bundle over a smooth manifold $M$.  If
  there exists an involutive subbundle $F_M$ of $TM$ of codimension
  $q$ with a flat connection
  $\nabla\colon \Gamma(F_M)\times\Gamma(E)\to\Gamma(E)$ such that
\begin{equation*}
\begin{xy}
  \xymatrix{
    E\ar[d]_{q_E}\ar[r]^{\pi}&E/\nabla\ar[d]^{[q_{E}]}\\
    M\ar[r]_{\pi_M}&M/F_M }
\end{xy}
\end{equation*}
is a smooth fibration of vector bundles, then the Atiyah class\linebreak
$\alpha_\nabla\in H^1(F_M,\Hom(TM/F_M,\End(E)))$ vanishes and the
Pontryagin spaces\linebreak $\Pont^l(E)\subseteq H^l(M)$ are all trivial for
$l>q$.
  \end{corollary}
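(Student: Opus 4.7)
The plan is to prove the corollary in two steps: first establish that the Atiyah class $\alpha_\nabla$ vanishes, then combine Theorem \ref{with_atiyah} with Remark \ref{imp_rem} to deduce the vanishing of the classical Pontryagin spaces in degrees above $q$.

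For the Atiyah class, my strategy is to exhibit a specific extension $\tilde\nabla$ of $\nabla$ to a linear connection $\mx(M)\times\Gamma(E)\to\Gamma(E)$ whose curvature lies in $\Gamma(\wedge^2 F_M^\circ\otimes\End(E))$; by the characterisation given right before Theorem \ref{with_atiyah}, this is exactly the condition $\alpha_\nabla=0$. Such an extension is produced by pulling back a linear connection from the quotient. Pick any linear connection $\bar\nabla$ on the vector bundle $E/\nabla\to M/F_M$. In a local tubular neighbourhood where $\pi_M$ looks like the projection $U\times L\to U$ (with $L$ a plaque of $F_M$), the fibration identifies $E$ over $U\times L$ with $\pi_M^*(E/\nabla)$, so one may define $\tilde\nabla$ as the direct sum of the pullback connection $\pi_M^*\bar\nabla$ in the $U$-direction and the flat connection $\nabla$ in the $L$-direction. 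These local pieces glue globally because $\nabla$-parallel sections of $E$ are exactly the $\phi$-projectable ones, which is a direct consequence of the fibration assumption. By construction $R_{\tilde\nabla}$ is the pullback of $R_{\bar\nabla}$ along $T\pi_M$, which annihilates any input in $F_M=\ker T\pi_M$; hence $R_{\tilde\nabla}\in\Gamma(\wedge^2 F_M^\circ\otimes\End(E))$ and $\alpha_\nabla=0$. This is the step where I would invoke \cite{Jotz18e}, since that reference carries out exactly such a construction.

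Once $\alpha_\nabla=0$ is in hand, Theorem \ref{with_atiyah} applied with $A=TM$ and $B=F_M$ gives $\Pont^l_{TM}(E)=0$ in $H^l(TM)$ for every $l>q$. By Remark \ref{imp_rem}, since the anchor of the tangent algebroid $TM$ is the identity, the map $\rho^\star\colon H^\bullet(M)\to H^\bullet(TM)$ is the identity, so $\Pont^\bullet_{TM}(E)=\Pont^\bullet(E)\subseteq H^\bullet(M)$ and the corollary follows. The main obstacle is the global well-definedness of $\tilde\nabla$: locally the construction is immediate, but globally it relies on simplicity of $F_M$ and triviality of the holonomy of $\nabla$, which are precisely the conditions packaged into the assumption that $\pi\colon E\to E/\nabla$ is a smooth fibration of vector bundles. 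For this reason the cleanest path is to cite the explicit construction in \cite{Jotz18e} rather than reproduce the gluing argument here.
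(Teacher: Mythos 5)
Your proposal is correct and follows essentially the same route as the paper: the Atiyah class vanishes because the fibration lets one pull back a connection from $E/\nabla\to M/F_M$ to an extension $\tilde\nabla$ with $R_{\tilde\nabla}\in\Gamma(\wedge^2F_M^\circ\otimes\End(E))$ (the paper simply cites \cite{Jotz18e} for this, where you sketch the construction), and then Theorem \ref{with_atiyah} with $A=TM$, $B=F_M$ gives the vanishing of $\Pont^l(E)$ for $l>q$. The only cosmetic difference is that the paper also points to Section \ref{reductible_vb}, which yields the Pontryagin statement independently since $\Pont^\bullet(E)=\pi_M^*\Pont^\bullet(E/\nabla)$ and $H^l(M/F_M)=0$ for $l>q=\dim(M/F_M)$; your argument via the Atiyah class is the one the corollary's placement intends.
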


\section{Pontryagin algebras of graded
  vector bundles}\label{conn_hom_sec}
This section studies connections up to homotopy on graded vector
bundles, and explains  how Pontryagin or characteristic algebras are
defined by those objects, in the same manner as the classical
Pontryagin algebras of a vector bundle are defined by linear
connections on it \cite{Quillen85,Mehta14}.

\subsection{The graded trace operator}\label{grd_vb}

In the following, consider a Lie algebroid
$(A\to M, \rho, [\cdot\,,\cdot])$, and a graded vector bundle
$\E=\oplus_{z\in\mathbb Z}E_z[z]$ over the same smooth
manifold $M$, with grading concentrated in finitely many degrees
(i.e.~all but finitely many of the vector bundles $E_z$, $z\in\mathbb Z$ are
trivial).

The \textbf{graded trace operator}
  $\gtr\colon \Gamma(\underline{\End}(\E))\to C^\infty(M)$,
  i.e.~\[\gtr\in\Omega^0(A, \Hom(\underline{\End}(\E)_0,
    \mathbb R))\] is defined by \[\gtr(\phi)=(-1)^{i}\tr(\phi)\]
  for $\phi\in\Gamma(\End(E_i))$. It yields
  a (graded) $\Omega^\bullet(A)$-linear map
  \[\widehat{\operatorname{gtr}}\colon \Omega(A,\underline{\End}(\E))_\bullet\to \Omega^\bullet(A).
  \]
  The operator $\widehat{\gtr}$ vanishes by definition on
  $\Omega^\bullet(A,\underline{\End}(\E)_i)$ for all $i\neq 0$, and so
  only `sees' the part $\Omega^\bullet(A,\underline{\End}(\E)_0)$ of
  $\Omega(A,\underline{\End}(\E))_\bullet$.

  The signs are chosen such that for
  $K_1\in\Omega^0(A,\Hom(E_i,E_j))=\Gamma(\Hom(E_i,E_j))$ and
  $K_2\in\Omega^0(A,\Hom(E_j,E_i))=\Gamma(\Hom(E_j,E_i))$, i.e.~with
  compositions $K_1\circ K_2\in\Gamma(\End(E_j))$ and
  $K_2\circ K_1\in\Gamma(\End(E_i))$:
  \begin{equation*}
    \begin{split}
      \gtr(K_1\circ K_2)&=(-1)^j\tr(K_1\circ K_2)=(-1)^j\tr(K_2\circ K_1)\\
      &=(-1)^{i+j}\gtr(K_2\circ K_1)=\gtr((-1)^{(j-i)(i-j)}K_2\circ K_1)\\
      &= \gtr((-1)^{|K_1|\cdot|K_2|}K_2\circ K_1)
\end{split}
\end{equation*}
since $i+j$ and $(j-i)(i-j)=2ij-j^2-i^2$ have the same parity.
That is, 
\begin{equation}\label{gtr_sign_comm}
  \gtr[K_1,K_2]=0
\end{equation}
for $K_1\in\Gamma(\Hom(E_i,E_j))$ and $K_2\in\Gamma(\Hom(E_j,E_i))$. More generally, this yields  
    \begin{equation}\label{trace_commutator_0_eq}
      \widehat{\gtr}([K_1,K_2])=0
    \end{equation}
    for 
    $K_1,K_2\in\Omega(A, \underline{\End}(\E))_\bullet$, see also \cite{Quillen85}.

    


  \subsection{Connections up to homotopy}\label{conn_hom}


  The notion of superconnection dates back to Quillen
  \cite{Quillen85}. Connections up to homotopy appeared in
  \cite{GrMe10a} in the more recent literature. The notion of
  connection up to homotopy defined by Crainic in
  \cite{Crainic00a,Crainic00b} is a different\footnote{There, a
    connection up to homotopy on a $2$-term complex $(E,\partial)$ of
    vector bundles
    \[ E^0\underset{\partial}{\overset{\partial}{\leftrightarrows}}
      E^1
\] is an $\R$-bilinear map
$\nabla\colon \mx(M)\times\Gamma(E)\to\Gamma(E)$ such that
$\partial\circ\nabla=\nabla\circ\partial$, that satisfies as usual the
Leibniz condition in the second argument, but which is not
$C^\infty(M)$-linear in the $\mx(M)$-entry. Instead, the failure of
the $C^\infty(M)$-linearity is measured by the commutator of
$\partial$ with a map
$H_\nabla\colon C^\infty(M)\times\Gamma(E)\to\Gamma(\End(E))$, which
is $\R$-linear and local in its entries. } one. 
\medskip

  Let $A\to M$ be a Lie algebroid and let $\E\to M$ a
  graded vector bundle of finite rank, i.e.~the grading is
  concentrated in finitely many degrees. Then a \textbf{connection up to
    homotopy} of $A$ on $\E$ is an operator
  \[\mathcal D\colon \Omega(A,\E)_\bullet\to\Omega(A,\E)_{\bullet+1}
  \]
  that increases the total degree by $1$ and satisfies
  \begin{equation}\label{leibniz_n_conn}
    \mathcal D(\omega\wedge \eta)=\dr_A\omega\wedge \eta+ (-1)^{|\omega|}\omega\wedge\mathcal D\eta
    \end{equation}
  for $\omega\in\Omega^\bullet(A)$ and
  $\eta\in\Omega(A,\E)_{\bullet}$.

  \medskip If $\E=E_0[0]\oplus\ldots\oplus E_{n-1}[n-1]$, then a
  connection up to homotopy of $A$ on $\E$ is called for simplicity
  \textbf{an $n$-connection}.  Of course, an $n$-connection
  $\mathcal D$ is an \emph{$n$-representation}, i.e.~an $n$-term
  representation up to homotopy in the sense of \cite{ArCr12}, if in
  addition $\mathcal D^2=0$.

\begin{example}[Degree-preserving connections are connections up to homotopy]\label{usual_connections}
  Let $\E\to M$ be a graded vector bundle of finite
  rank. Choose for all $z\in\mathbb Z$ a linear $A$-connection
  $\nabla^z\colon\Gamma(A)\times\Gamma(E_z)\to\Gamma(E_z)$.  Then the
  connections define together a connection up to homotopy
  \[\mathcal D\colon \Omega(A,\E)_\bullet\to\Omega(A,\E)_{\bullet+1}
  \]
  by
  $ \mathcal D(\omega)=\dr_{\nabla^z}\omega\in \Omega^{\bullet+1}(A,E_z)$
  for $\omega\in \Omega^\bullet(A,E_z)$.
\end{example}

\begin{example}[$2$-connections in more detail]\label{2_conn_explicit}
  Take
  $\E=E_0[0]\oplus E_1[1]$ over $M$ and $A\to M$ a Lie
  algebroid. Then a $2$-connection \[\mathcal D\colon \Omega(A,\E)_\bullet\to\Omega(A,\E)_{\bullet+1}
  \]
  is completely defined by its values
  \[\mathcal D(e_0)\in\Omega^1(A,E_0)\oplus\Omega^0(A,E_1)\quad \text{
      and }\quad \mathcal
    D(e_1)\in\Omega^1(A,E_1)\oplus\Omega^2(A,E_0)\] for arbitrary
  $e_0\in\Gamma(E_0)$ and $e_1\in\Gamma(E_1)$. It is easy to check
  that \[\mathcal D(e_0)=\dr_{\nabla^0}e_0+\partial(e_0)
  \quad \text{ and } \quad \mathcal D(e_1)=\dr_{\nabla^1}e_1+\widehat{K}(e_1)
  \]
  for $\nabla^i\colon \Gamma(A)\times\Gamma(E_i)\to\Gamma(E_i)$ linear
  connections, $i=0,1$, a vector bundle morphism
  $\partial\colon E_0\to E_1$ over the identity,
  i.e.~$\partial\in\Omega^0(A,\Hom(E_0,E_1))$, and
  $K\in\Omega^2(A,\Hom(E_1,E_0))$.
\end{example}

In general, connections up to homotopy can be described as follows.
\begin{proposition}\label{dec_n_conn}
  Let $A\to M$ be a Lie algebroid and let $\E\to M$ be a
  graded vector bundle of finite rank. Then a connection up to homotopy
  \[\mathcal D\colon \Omega(A,\E)_\bullet\to\Omega(A,\E)_{\bullet+1}
  \]
  can always be written
  \[\mathcal D=\dr_\nabla+\widehat{D}
  \]
  with a linear connection
  $\nabla\colon\Gamma(A)\times\Gamma(\E)\to\Gamma(\E)
  $ that preserves the grading as in Example \ref{usual_connections}, and
  $D\in\Omega(A,\underline{\End}(\E))_1$.
  The connection $\nabla$ and the form $D$ can even be chosen such that
  \[D\in\bigoplus_{s\neq 1}\Omega^s(A,\underline{\End}(\E)_{1-s}).\]
\end{proposition}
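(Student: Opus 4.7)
The plan is to build $\nabla$ and $D$ from $\mathcal D$ in two stages: first extract an arbitrary grading-preserving connection, then absorb a ``diagonal'' correction term into it.

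First I would pick any grading-preserving linear $A$-connection $\nabla_0$ on $\E$, for instance by choosing $\nabla_0^z$ on each $E_z$ independently (only finitely many are nontrivial) as in Example \ref{usual_connections}. This defines an operator $\dr_{\nabla_0}\colon \Omega(A,\E)_\bullet\to\Omega(A,\E)_{\bullet+1}$ of total degree $1$ satisfying the Leibniz rule \eqref{leibniz_n_conn}. Consider the difference
\[
\Delta := \mathcal D-\dr_{\nabla_0}\colon \Omega(A,\E)_\bullet\to\Omega(A,\E)_{\bullet+1}.
\]
Since both $\mathcal D$ and $\dr_{\nabla_0}$ satisfy \eqref{leibniz_n_conn}, the Leibniz terms cancel and $\Delta(\omega\wedge\eta)=(-1)^{|\omega|}\omega\wedge\Delta(\eta)$ for $\omega\in\Omega^\bullet(A)$, i.e.\ $\Delta$ is a graded-$\Omega^\bullet(A)$-linear operator of total degree $1$. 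By the bijection $K\mapsto \widehat K$ between $\Omega(A,\underline{\End}(\E))_s$ and $\Hom^s_{\Omega(A)}(\Omega(A,\E),\Omega(A,\E))$ reviewed in Section~\ref{background}, there is a unique $D_0\in\Omega(A,\underline{\End}(\E))_1$ with $\Delta=\widehat{D_0}$. This already gives the first assertion with $\nabla=\nabla_0$ and $D=D_0$.

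For the refinement, I would split $D_0$ according to the decomposition
\[
\Omega(A,\underline{\End}(\E))_1=\bigoplus_{s\geq 0}\Omega^s(A,\underline{\End}(\E)_{1-s}),
\]
writing $D_0=D_0^{(1)}+D_0'$ with $D_0^{(1)}\in\Omega^1(A,\underline{\End}(\E)_0)=\bigoplus_z\Omega^1(A,\End(E_z))$ and $D_0'\in\bigoplus_{s\neq 1}\Omega^s(A,\underline{\End}(\E)_{1-s})$. The component $D_0^{(1)}$ is \emph{exactly} the data of a difference between two grading-preserving $A$-connections on $\E$: writing $D_0^{(1)}=\sum_z \omega^z$ with $\omega^z\in\Omega^1(A,\End(E_z))$, define a new grading-preserving connection $\nabla$ on $\E$ by $\nabla^z=\nabla_0^z+\omega^z$ on each $E_z$. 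The standard identity $\dr_{\nabla_0^z+\omega^z}=\dr_{\nabla_0^z}+\widehat{\omega^z}$ on $\Omega^\bullet(A,E_z)$ gives $\dr_\nabla=\dr_{\nabla_0}+\widehat{D_0^{(1)}}$ on $\Omega(A,\E)_\bullet$, whence
\[
\mathcal D=\dr_{\nabla_0}+\widehat{D_0^{(1)}}+\widehat{D_0'}=\dr_\nabla+\widehat D
\]
with $D:=D_0'\in\bigoplus_{s\neq 1}\Omega^s(A,\underline{\End}(\E)_{1-s})$, as desired.

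The only potential obstacle is the identification $\Delta=\widehat{D_0}$, which requires invoking the bijection from Section~\ref{background} in its graded form (elements of total degree $s$ correspond to graded-$\Omega(A)$-linear operators raising total degree by $s$); and the verification that $\dr_{\nabla_0+\omega}=\dr_{\nabla_0}+\widehat\omega$ when $\omega$ is grading-preserving, which is a direct consequence of the Leibniz rule \eqref{connection} applied componentwise in each $E_z$. Both are routine once the formalism of Section~\ref{background} is in place.
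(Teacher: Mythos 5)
Your proposal is correct and follows exactly the same two-step argument as the paper: subtract the de Rham operator of an arbitrary grading-preserving connection to get a graded $\Omega^\bullet(A)$-linear difference $\widehat{D_0}$, then absorb the component $D_0^{(1)}\in\Omega^1(A,\underline{\End}(\E)_0)$ into the connection. The extra details you supply (the explicit Leibniz cancellation and the identity $\dr_{\nabla_0+\omega}=\dr_{\nabla_0}+\widehat{\omega}$) are exactly the routine verifications the paper leaves implicit.
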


\begin{proof}
  Take any degree-preserving connection
  $\nabla\colon\Gamma(A)\times\Gamma(\E)\to\Gamma(\E)$
  as in Example \ref{usual_connections}. Then
  $\mathcal D-\dr_\nabla\colon
  \Omega(A,\E)_\bullet\to\Omega(A,\E)_{\bullet+1}$
  is easily seen to be graded $\Omega^\bullet(A)$-linear. Hence
  $\mathcal D-\dr_\nabla=\widehat{D}$ for a
  $D\in\Omega(A,\underline{\End}(\E))_1$.

  \medskip

  Now write
  $D=\sum_{s\in\Z}D_s\in\bigoplus_{s\in
    \Z}\Omega^s(A,\underline{\End}(\E)_{1-s})$.  Then
  $D_1\in\Omega^1(A,\underline{\End}(\E)_0)$ and so
  $\nabla':=\nabla+D_1$ is a new connection on $\E$ that preserves
  the grading, such that
  $\mathcal D=\dr_\nabla+\widehat{D}=\dr_{\nabla'}+\widehat{D-D_1}$.
\end{proof}

\medskip

Finally, a connection up to homotopy of $A$ on $\E$ defines an induced
connection up to homotopy
  \[\mathcal D_{\End}\colon \Omega(A,\underline{\End}(\E))_\bullet\to \Omega(A,\underline{\End}(\E))_{\bullet+1}
    \]of $A$ on $\underline{\End}(\E)$ by
  \[ \widehat{\mathcal D_{\End}(K)}=\mathcal D\circ\widehat{K}-(-1)^{|K|}\widehat{K}\circ\mathcal D
  \]
  for all $K\in \Omega(A,\underline{\End}(\E))_\bullet$ and
  $e\in\Gamma(\E)$. That is, as before,
  \begin{equation}\label{def_D_End}
    [\mathcal D,\widehat{K}]:=\mathcal D\circ \widehat{K}- (-1)^{|K|}\widehat{K}\circ\mathcal D= \widehat{\mathcal D_{\End}K}
  \end{equation}
  for all $K\in \Omega(A,\underline{\End}(\E))_\bullet$.
  More generally, if $\mathcal D$ is a connection up to homotopy of $A$ on
  $\E$ and $\mathcal D'$ is a connection up to homotopy of $A$ on
  $\underline{E'}$, then define the induced connection up to
  homotopy
  \[\mathcal D_{\Hom}\colon
    \Omega(A,\underline{\Hom}(\E,\underline{E'}))_\bullet\to
    \Omega(A,\underline{\Hom}(\E,\underline{E'}))_{\bullet+1}
    \]of $A$ on $\underline{\Hom}(\E,\underline{E'})$ by
  \[ \widehat{\mathcal D_{\Hom}(K)}=\mathcal D'\circ \widehat{K}-(-1)^{|K|}\widehat{K}\circ\mathcal D
  \]
  for all $K\in \Omega(A,\underline{\Hom}(\E,\underline{E'}))_\bullet$.

  As in the case of superconnections, this yields the following lemma
  \cite{Quillen85}, see also \cite{Mehta14}.
  \begin{lemma}In the situation above, 
    \begin{equation}\label{trace_da}
      \widehat{\gtr}\circ \mathcal D_{\End}=\dr_A\circ \widehat{\gtr}.
    \end{equation}
  \end{lemma}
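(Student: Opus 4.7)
The plan is to reduce the statement to the already-established identity \eqref{tr_dA_formula} for an ordinary linear connection, together with the vanishing of $\widehat{\gtr}$ on graded commutators, equation \eqref{trace_commutator_0_eq}.

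First I would apply Proposition \ref{dec_n_conn} to decompose $\mathcal D = \dr_\nabla + \widehat{D}$, where $\nabla\colon\Gamma(A)\times\Gamma(\E)\to\Gamma(\E)$ is a linear $A$-connection preserving the grading and $D\in \Omega(A,\underline{\End}(\E))_1$. Using \eqref{d_nabla_K_end} applied to each graded piece of $\underline{\End}(\E)$, together with the definition \eqref{def_D_End} of $\mathcal D_{\End}$ and the fact that $[\widehat{D},\widehat{K}] = \widehat{[D,K]}$, I would verify the splitting
\[
\mathcal D_{\End}(K) \;=\; \dr_{\nabla^{\End}}K \;+\; [D,K]
\]
for every homogeneous $K\in \Omega(A,\underline{\End}(\E))_\bullet$.

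Applying $\widehat{\gtr}$ to both sides, the commutator term drops out at once by \eqref{trace_commutator_0_eq}, so it remains to show
\[
\widehat{\gtr}\bigl(\dr_{\nabla^{\End}}K\bigr) \;=\; \dr_A\bigl(\widehat{\gtr}(K)\bigr).
\]
Because $\nabla$ preserves the grading, $\nabla^{\End}$ preserves the $\underline{\End}(\E)$-grading, so $\dr_{\nabla^{\End}}$ maps $\Omega^\bullet(A,\underline{\End}(\E)_k)$ to $\Omega^{\bullet+1}(A,\underline{\End}(\E)_k)$. Both sides vanish unless the $\underline{\End}(\E)_0$-component of $K$ is nontrivial, so I would reduce to $K\in \Omega^\bullet(A,\End(E_i))$ for some $i$. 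On that piece, $\widehat{\gtr} = (-1)^i\widehat{\tr}$ where $\widehat{\tr}$ is formed from the ordinary trace on $\End(E_i)$, and the connection $\nabla^{\End}$ restricts to the endomorphism connection induced by $\nabla\an{E_i}$. The desired identity therefore follows from \eqref{tr_dA_formula} applied on each $\End(E_i)$, multiplied by the sign $(-1)^i$ which can be pulled through $\dr_A$.

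The only subtle point in this plan is keeping track of signs when splitting $\mathcal D_{\End}$ and when verifying that $\dr_{\nabla^{\End}}$ genuinely preserves the internal $\underline{\End}(\E)$-grading in spite of the Koszul signs built into the commutator. I expect this to be the main (and in fact only) real obstacle; once the decomposition $\mathcal D_{\End} = \dr_{\nabla^{\End}} + [D,\cdot]$ is verified cleanly, everything else is an immediate invocation of \eqref{tr_dA_formula} and \eqref{trace_commutator_0_eq}.
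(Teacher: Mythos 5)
Your proposal is correct and follows essentially the same route as the paper: decompose $\mathcal D=\dr_\nabla+\widehat{D}$ via Proposition \ref{dec_n_conn}, verify $\mathcal D_{\End}K=\dr_{\nabla^{\End}}K+[D,K]$, kill the commutator with \eqref{trace_commutator_0_eq}, and reduce to the compatibility of $\widehat{\gtr}$ with $\dr_{\nabla^{\End}}$. Your final step (restricting to each $\End(E_i)$ and invoking \eqref{tr_dA_formula} with the constant sign $(-1)^i$) is just a componentwise repackaging of the paper's direct appeal to $\dr_{\nabla^{\Hom}}\gtr=0$ from the proof of Lemma \ref{lemma_easy_trace_flat}.
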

  
     \begin{proof}
      Write the connection up to homotopy $\mathcal D$ as in Proposition \ref{dec_n_conn} as
      \[ \mathcal D=\dr_\nabla+\widehat{D} \] with
      $\nabla\colon\Gamma(A)\times\Gamma(\E)\to\Gamma(\E)$ a linear
      connection that preserves the grading, and
      $D\in\Omega(A,\underline{\End}(\E))_1$.  Then for
      $K\in\Omega(A,\underline{\End}(\E))_\bullet$:
      \begin{equation*}
        \begin{split}
          \widehat{\mathcal D_{\End}K}
          &=\dr_\nabla\circ \widehat{K}+\widehat{D}\circ\widehat{K}-(-1)^{|K|}\widehat{K}\circ\dr_\nabla-(-1)^{|K|}\widehat{K}\circ\widehat{D}=\widehat{\dr_{\nabla^{\End}}K}+\widehat{[D,K]}.
        \end{split}
      \end{equation*}
      This yields $\mathcal D_{\End}K=\dr_{\nabla^{\End}}K+[D,K]$ and so by \eqref{trace_commutator_0_eq}
      \begin{equation}\label{gtr_after_D}
        \widehat{\gtr}\left(\mathcal D_{\End}K\right)=\widehat{\gtr}\left(\dr_{\nabla^{\End}}K\right).
      \end{equation}
      The connection $\mathcal \dr_{\nabla^{\End}}$ and the flat
      connection
      $\dr_A\colon \Omega^\bullet(A)\to\Omega^{\bullet+1}(A)$ yield as
      before the connection
      $\dr_{\nabla^{\Hom}}\colon
      \Omega(A,\underline{\Hom}(\underline{\End},\R))_\bullet\to
      \Omega(A,\underline{\Hom}(\underline{\End},\R))_{\bullet+1}$.
      Equation \eqref{gtr_after_D} and the proof of Lemma
      \ref{lemma_easy_trace_flat} now give
      \[\dr_A\circ\widehat{\gtr}-\widehat{\gtr}\circ\mathcal D_{\End}
        =\dr_A\circ\widehat{\gtr}-\widehat{\gtr}\circ\dr_{\nabla^{\End}}=\widehat{\dr_{\nabla^{\Hom}}\gtr}=0.\qedhere
      \]
    \end{proof}


\subsection{Curvature of a connection up to homotopy, and Pontryagin characters}
Now if $\mathcal D$ is a connection up to homotopy of $A$ on
$\E$, then \eqref{leibniz_n_conn} implies immediately
\[\mathcal D^2(\omega\wedge \eta)=\omega\wedge \mathcal D^2\eta=(-1)^{2|\omega|}\omega\wedge \mathcal D^2\eta
  \]
  for $\omega\in\Omega^\bullet(A)$ and
  $\eta\in\Omega(A,\E)_{\bullet}$. That is,
  $\mathcal D^{2}$ is (graded) $\Omega^\bullet(A)$-linear and there is
  a unique $R\in\Omega(A,\underline{\End}(\E))_2$ with
  $\mathcal D^2=\widehat{R_{\mathcal D}}$.
    The form $R_{\mathcal D}\in\Omega(A,\underline{\End}(\E))_2$ is the
  \textbf{curvature form of $\mathcal D$}.

    Of course, an $n$-connection is an $n$-representation if and only
    if its curvature form vanishes.  As before, define
    $R_{\mathcal
      D}^i\in\Omega(A,\underline{\End}(\E))_{2i}$ by
  \[ \widehat{R_{\mathcal D}^i}=\widehat{R_{\mathcal D}}^i=\mathcal D^{2i}
  \]
  for $i\geq 1$.
The Bianchi identity
  \begin{equation}\label{bianchi}
    \mathcal D_{\End}R_{\mathcal D}^i=0
  \end{equation}
then  holds for all $i\geq 1$ since
  \[ \widehat{\mathcal D_{\End}R^i_{\mathcal D}}=[\mathcal D,
    \widehat{R^i_{\mathcal D}}]=[\mathcal D, \mathcal D^{2i}]=\mathcal
    D^{2i+1}-(-1)^{2i}\mathcal D^{2i+1}=0.
  \]
  As a consequence, the curvature form $R_{\mathcal D}$ satisfies
    \[\dr_A(\widehat{\gtr}(R^{i}_{\mathcal D}))\overset{\eqref{trace_da}}{=}\widehat{\gtr}(\mathcal D_{\End}(R^{i}_{\mathcal D}))\overset{\eqref{bianchi}}{=}0
    \]
    for all $i\geq 1$.


   \begin{example}\label{usual_grd_curvature}
     In the situation of Example \ref{usual_connections}, it is easy
     to see that for each $i\geq 1$
     \[R_{\mathcal D}^i=\sum_{z\in \Z}R_{\nabla^z}^i
       \,\,\in\bigoplus_{z\in \Z}\Omega^{2i}(A,\End(E_z))\subseteq
       \Omega(A,\underline{\End}(\E))_{2i}.\] In this case, all the
     results follow easily from the considerations in
     \S\ref{classical_pont}, and
     \[\widehat{\gtr}(R^{i}_{\mathcal D})=\sum_{z\in \Z}(-1)^z\widehat{\tr}(R_{\nabla^z}^i)
     \]
     which is obviously a $\dr_A$-closed element of $\Omega^{2i}(A)$ by \eqref{closed_easy}.
     This is already observed in \cite{Quillen85} in the context of superconnections.
     \end{example}

    \bigskip

    Now one can construct as before the Pontryagin algebras defined by
    the powers of the curvature form.
    \begin{proposition}\label{inv_conn}
      Choose a graded vector bundle $\E$ of finite rank
      over a smooth manifold $M$, and a Lie algebroid $A$ over $M$.
      Then the cohomology classes
    \[\left[\widehat{\gtr}(R^{i}_{\mathcal D})\right]\in H^{2i}(A)
    \]
    do not depend on the choice of the connection up to homotopy
    $\mathcal D$ on $\E$.
  \end{proposition}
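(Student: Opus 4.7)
The plan is to adapt the classical Chern--Weil transgression argument to this graded setting, relying on the three identities already established in the paper: the graded cyclicity \eqref{trace_commutator_0_eq} of $\widehat{\gtr}$, its intertwining property \eqref{trace_da} with the induced connection $\mathcal{D}_{\End}$, and the Bianchi identity \eqref{bianchi}. Given two connections up to homotopy $\mathcal{D}_0, \mathcal{D}_1$ of $A$ on $\underline{E}$, I would first observe, using the Leibniz rule \eqref{leibniz_n_conn}, that $\mathcal{D}_1 - \mathcal{D}_0$ is graded $\Omega^\bullet(A)$-linear of total degree $1$, hence equal to $\widehat{\alpha}$ for a unique $\alpha \in \Omega(A, \underline{\End}(\underline{E}))_1$. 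I would then interpolate via $\mathcal{D}_t := \mathcal{D}_0 + t\widehat{\alpha}$ for $t \in [0,1]$, each $\mathcal{D}_t$ being again a connection up to homotopy with a well-defined curvature $R_{\mathcal{D}_t}$.

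The main computation is to differentiate in $t$. Since $\mathcal{D}_t^2 = \widehat{R_{\mathcal{D}_t}}$ and $|\alpha|=1$, the identity \eqref{def_D_End} will give $\frac{d}{dt} R_{\mathcal{D}_t} = (\mathcal{D}_t)_{\End}\alpha$. Applying $\widehat{\gtr}$ to the derivative of $R^i_{\mathcal{D}_t}$, the graded cyclicity \eqref{trace_commutator_0_eq} should collapse the $i$ summands coming from the product rule into a single term $i \cdot \widehat{\gtr}\bigl((\mathcal{D}_t)_{\End}\alpha \cdot R^{i-1}_{\mathcal{D}_t}\bigr)$, since every $R^j_{\mathcal{D}_t}$ has even total degree $2j$ and produces only trivial signs when slid across. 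A graded Leibniz calculation together with Bianchi \eqref{bianchi} then rewrites this as $\widehat{\gtr}\bigl((\mathcal{D}_t)_{\End}(\alpha \cdot R^{i-1}_{\mathcal{D}_t})\bigr)$, and \eqref{trace_da} identifies it with $\dr_A \widehat{\gtr}\bigl(\alpha \cdot R^{i-1}_{\mathcal{D}_t}\bigr)$. Integrating from $0$ to $1$ and pulling $\dr_A$ outside the integral then yields
\[
\widehat{\gtr}(R^i_{\mathcal{D}_1}) - \widehat{\gtr}(R^i_{\mathcal{D}_0}) = \dr_A\!\left(i\int_0^1 \widehat{\gtr}\bigl(\alpha \cdot R^{i-1}_{\mathcal{D}_t}\bigr)\,dt\right),
\]
so the cohomology classes in $H^{2i}(A)$ coincide.

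The main obstacle I anticipate is purely one of bookkeeping: confirming that $(\mathcal{D}_t)_{\End}$ acts as a graded derivation with respect to composition in $\Omega(A, \underline{\End}(\underline{E}))_\bullet$ --- which follows directly from its definition \eqref{def_D_End} as a graded commutator with $\mathcal{D}_t$ --- and verifying that the parity of $2j$ makes every sign trivial when applying the cyclicity of $\widehat{\gtr}$. No new conceptual input is required; the argument parallels Quillen's proof for superconnections in \cite{Quillen85}, which is exactly the model indicated in the paper for the deferred appendix proof.
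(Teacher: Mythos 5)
Your proposal is correct and follows essentially the same route as the paper's proof in Appendix \ref{proof_inv_conn}: the linear interpolation $\mathcal D_t=\mathcal D_0+t\widehat{\alpha}$, the computation $\frac{d}{dt}R_{\mathcal D_t}=(\mathcal D_t)_{\End}\alpha$, and the chain of identities \eqref{trace_commutator_0_eq}, \eqref{bianchi}, \eqref{trace_da} producing an exact transgression term. The only differences are notational (the paper writes $D$ for your $\alpha$ and orders the wedge factors as $R_{\mathcal D_t}^{i-1}\wedge D$), which the graded cyclicity of $\widehat{\gtr}$ renders immaterial.
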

  As observed in \cite{Mehta14}, the proof of Proposition
  \ref{inv_conn} follows the standard techniques, eaxctly as done in
  \cite{Quillen85} in the situation of superconnections. For
  the convenience of the reader, it is carried out in detail in Appendix
  \ref{proof_inv_conn}.  
  \begin{definition}
    Choose a graded vector bundle $\E$ of finite rank over
    a smooth manifold $M$, and a Lie algebroid $A$ over $M$.  Then the
    \textbf{$A$-Pontryagin algebra of the graded vector bundle $\E$} 
    \[
      \Pont^\bullet_A(\E)\subseteq H^\bullet(A)
    \]
    is the subalgebra generated by the \textbf{$A$-Pontryagin characters of $\E$}
    \[\sigma_A^i(\E):=\left[\widehat{\gtr}(R^{i}_{\mathcal D})\right]\in H^{2i}(A), \quad i\geq 1,
    \]
    defined by any choice of connection up to homotopy $\mathcal D $ of $A$ on $\E$.


    \end{definition}

    Here also, it is easy to show using Example
    \ref{usual_connections} and Proposition \ref{inv_conn} that
    \[\Pont^\bullet_A(\E)=\rho^\star\Pont^\bullet(\E).
    \]
    As usual, $\Pont^\bullet(\E)$ denotes the $TM$-Pontryagin algebra
    of $\E$.

    \begin{remark}
      This paper does not define \emph{Pontryagin classes} of a graded
      vector bundle as images of special invariant polynomials under a
      suitable Chern-Weil homomorphism -- this is not needed for the
      obstruction theorems below. However, consider a graded vector
      bundle $\E=\bigoplus_{z\in\Z}E_z[z]$ and set
      $\underline{V}:=\bigoplus_{z\in\Z}\R^{\operatorname{rank}E_z}[z]$,
      a (finite dimensional) graded $\R$-vector space. Set
      $\mathcal A(\underline{V})\subseteq \mathcal
      P(\underline{\lie{gl}}(\underline{V}))$ to be the subalgebra of
      polynomials that is generated by the polynomials
      \[ \phi\mapsto \gtr(\phi^l),
      \]
      for $l\geq 1$.  Then there is an obvious Chern-Weil homomorphism
      $\mathcal A(\underline{V})\to \Pont^\bullet_A(\E)$ of $\R$-algebras, but
      $\mathcal A(\underline{V})$ cannot be understood as a subalgebra
      of the $\underline{\operatorname{Gl}}(\underline{V})$-invariant
      polynomials on $\underline{\lie{gl}}(\underline{V})$ since for
      $\phi\in \underline{\lie{gl}}(\underline{V})$,
      $A\in \underline{\operatorname{Gl}}(\underline{V})$ and $l\geq 1$:
      \[ \gtr((A\phi A\inv)^l)=(-1)^{|A|+l|\phi|\cdot|A|}\gtr(\phi^l)
      \]
      by \eqref{gtr_sign_comm}.
      \end{remark}

    \begin{example}
      In the situation of Example \ref{2_conn_explicit},
      \[\mathcal D\colon \Omega(A,E_0[0]\oplus E_1[1])_\bullet\to \Omega(A,E_0[0]\oplus E_1[1])_{\bullet+1}\]
      equals
      \[\mathcal D=\dr_\nabla+\widehat{\partial}+\widehat{\omega}
      \]
      with $\nabla\colon \Gamma(A)\times\Gamma(\E)\to\Gamma(\E)$ a
      linear connection that preserves the degree,
      $\partial\in\Gamma(\Hom(E_0,E_1))=\Omega^0(A,\End(\E)_1)$ and
      $\omega\in\Omega^2(A,\Hom(E_1,E_0))=\Omega^2(A,\End(\E)_{-1})$.

      Then
      \begin{equation}
        \begin{split}
          \mathcal D^2&=\dr_\nabla^2+\dr_\nabla\circ\widehat{\partial}+\dr_\nabla\circ\widehat{\omega}+\widehat{\partial}\circ\dr_\nabla+\widehat{\partial}\circ\widehat{\omega}+\widehat{\omega}\circ\dr_\nabla+\widehat{\omega}\circ\widehat{\partial}\\
          &=
          \dr_\nabla^2+\left[\dr_\nabla,\widehat{\partial}\right]+\left[\dr_\nabla,\widehat{\omega}\right]+\left[\widehat{\partial},\widehat{\omega}\right]
          =
          \widehat{R_\nabla}+\widehat{\dr_{\nabla^{\End}}\partial}+\widehat{\dr_{\nabla^{\End}}\omega}+\widehat{\left[\partial,\omega\right]}.
      \end{split}
    \end{equation}
    In this equation,
    $R_\nabla+\left[\partial,\omega\right]\in\Omega^2(A,\End(\E)_0)$,
    $\dr_{\nabla^{\End}}\partial\in\Omega^1(A,\End(\E)_1)$ and
    $\dr_{\nabla^{\End}}\omega\in\Omega^3(A,\End(\E)_{-1})$. This
    shows that the $2$-connection is a $2$-representation if and only
    if \cite{ArCr12,GrMe10a}
    \[ R_{\nabla^0}+\omega\circ\partial=0, \quad R_{\nabla^1}+\partial\circ\omega=0, \quad \nabla^1\circ \partial=\partial\circ\nabla^0\, \text{ and }\, \dr_{\nabla^{\End}}\omega=0.
    \]
    The form $\widehat{\gtr}(\mathcal D^2)$ is 
    \[ \widehat{\gtr}(R_\nabla+\left[\partial,\omega\right])=\widehat{\tr}(R_{\nabla^0}+\omega\circ\partial)-\widehat{\tr}(R_{\nabla^1}+\partial\circ\omega).
    \]
    The form $\widehat{\gtr}(\mathcal D^4)$ is the graded trace of 
    \[R_\nabla^2+R_\nabla\wedge[\partial,\omega]+[\partial,\omega]\wedge R_\nabla+[\partial,\omega]^2+\left(\dr_{\nabla^{\End}}\partial\right)\wedge\left(\dr_{\nabla^{\End}}\omega\right)
        +\left(\dr_{\nabla^{\End}}\omega\right)\wedge\left(\dr_{\nabla^{\End}}\partial\right),
      \]
      etc.
      \end{example}

\subsection{Application: Obstructions to the existence of an $n$-representation}
Example \ref{usual_connections} shows that a degree-preserving linear
$A$-connection on $\E$ is an example of an $A$-connection up to
homotopy on $\E$. Choose a graded vector bundle $\E$ of finite rank $k$ over a smooth
      manifold $M$, and a Lie algebroid $A$ over $M$, and set $E:=\oplus_{z\in\Z}E_z$.
      If $\E$ is concentrated in even degrees, then by Proposition \ref{inv_conn} and Example
\ref{usual_grd_curvature},
      the Pontryagin characters satisfy
    \[
      \sigma^i_A(\E)=\sigma^i(E)
    \]
    for all $i\geq 1$.
 If $\E$ has grading
    in odd degrees only,
    \[ \sigma_A^i(\E)=-\sigma^i_A(E)\in H^\bullet(A),
    \]
    for all $i\geq 1$.
    That is, the Pontryagin algebra of the graded vector bundle
    $\E$ is then just the Pontryagin algebra of the
    vector bundle $E$ obtained by forgetting the grading on
    $\E$.
    
    This shows that Pontryagin algebras of graded vector bundles only
    lead to new information if the grading is on mixed odd and even
    degrees. In general, Proposition \ref{inv_conn}, Example
    \ref{usual_connections} and Example \ref{usual_grd_curvature} lead
    to the following formula.

  \begin{corollary}\label{old_alt_sign_is_new}
    Let $\E=\bigoplus_{z\in\Z}E_z$ be a graded vector bundle of finite
    rank over a smooth manifold $M$, and let $A\to M$ be a Lie
    algebroid.
Then for  $l\geq 1$,
the $A$-Pontryagin character $\sigma_A^l(\E)$ of $\E$
equals
\begin{equation}\label{alt_form_pont}
\sigma_A^l(\E)=\sum_{z\in
    \Z}(-1)^z\sigma^l_A(E_z) \quad \in H^{2l}(A).
\end{equation}
\end{corollary}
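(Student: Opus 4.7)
The plan is to invoke Proposition~\ref{inv_conn} to compute the Pontryagin character with a particularly convenient choice of connection up to homotopy, namely the one coming from Example~\ref{usual_connections}. Since $\sigma_A^l(\E)$ is independent of the chosen connection up to homotopy, any specific computation suffices.

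First I would pick, for each $z \in \Z$ with $E_z \neq 0$, a linear $A$-connection $\nabla^z \colon \Gamma(A) \times \Gamma(E_z) \to \Gamma(E_z)$. By Example~\ref{usual_connections} these assemble into a degree-preserving connection up to homotopy $\mathcal D$ on $\E$ given on $\Omega^\bullet(A, E_z)$ by $\dr_{\nabla^z}$. Example~\ref{usual_grd_curvature} then immediately gives the decomposition
\[
R_{\mathcal D}^l = \sum_{z \in \Z} R_{\nabla^z}^l \; \in \; \bigoplus_{z \in \Z} \Omega^{2l}(A, \End(E_z)) \subseteq \Omega(A, \underline{\End}(\E))_{2l}.
\]

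Next I would apply the graded trace. By definition of $\widehat{\gtr}$, which acts on $\Omega^\bullet(A, \End(E_z))$ as $(-1)^z \widehat{\tr}$, we obtain
\[
\widehat{\gtr}(R_{\mathcal D}^l) = \sum_{z \in \Z} (-1)^z \, \widehat{\tr}(R_{\nabla^z}^l).
\]
Passing to cohomology classes in $H^{2l}(A)$, the left-hand side represents $\sigma_A^l(\E)$ by Proposition~\ref{inv_conn}, while each summand $[\widehat{\tr}(R_{\nabla^z}^l)]$ on the right-hand side is by definition $\sigma_A^l(E_z)$ (see Section~\ref{classical_pont}). This yields the claimed identity \eqref{alt_form_pont}.

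There is no real obstacle here: the corollary is a direct bookkeeping consequence of (i) independence of the choice of connection up to homotopy, (ii) the fact that degree-preserving connections are themselves connections up to homotopy, and (iii) the sign convention built into the graded trace. The only point worth verifying carefully is that the sum in \eqref{alt_form_pont} is finite, which is guaranteed by the assumption that $\E$ has finite rank, so that only finitely many $E_z$ are nonzero.
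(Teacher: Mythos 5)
Your proposal is correct and follows exactly the paper's own argument: choose a degree-preserving connection as in Example \ref{usual_connections}, decompose the curvature via Example \ref{usual_grd_curvature}, apply the sign convention of the graded trace, and invoke Proposition \ref{inv_conn} for independence of the choice. No differences worth noting.
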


\begin{proof}
Choose linear connections
$\nabla\colon \Gamma(A)\times\Gamma(E_z)\to\Gamma(E_z)$ for each
$z\in\Z$ and let
$\mathcal D$ be the induced connection up to homotopy of $A$ on $\E$
as in Example \ref{usual_connections}.
Then by Proposition \ref{inv_conn} and Example \ref{usual_grd_curvature}:
\[\sigma_A^l(\E)=\left[\widehat{\gtr}(R_{\mathcal D}^l)\right]=\sum_{z\in
   \Z}(-1)^z\left[\widehat{\tr}(R_{\nabla^z}^l)\right]=\sum_{z\in
    \Z}(-1)^z\sigma^l_A(E_z).\qedhere\]
  \end{proof}
\begin{remark}
  Using the formula in the last corollary, it is again easy to show
  that $\Pont_A^l(\E)\neq 0$ implies $l=4z$ for some $z\in \N$.
\end{remark}
  \medskip

   Corollary \ref{old_alt_sign_is_new} gives a
  necessary condition for the existence of an $n$-representation on a
  given graded vector bundle
  $\E=E_0[0]\oplus\ldots\oplus E_ {n-1}[n-1]$.
      \begin{theorem}
        Let $\E=E_0[0]\oplus\ldots\oplus E_{n-1}[n-1]$ be a graded
        vector bundle over a smooth manifold $M$, and let $A\to M$ be
        a Lie algebroid.  If there exists an $n$-representation
        $\mathcal D$ of $A$ on $\E$, then the $A$-Pontryagin
        characters $\sigma_A^l(E_i)$, $l>1$, of the vector bundles
        $E_i$, $i=0,\ldots,n-1$, satisfy the equations
        \begin{equation}\label{gen_n_rep_case}
          \sum_{i=0}^{n-1}(-1)^i\sigma^l_A(E_i)=0\in H^{2l}(A)
        \end{equation}
        for all $l>1$.
      \end{theorem}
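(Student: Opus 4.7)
The plan is to combine the independence of the Pontryagin characters from the choice of connection up to homotopy (Proposition \ref{inv_conn}) with the explicit formula for the characters obtained from an ordinary degree-preserving connection (Corollary \ref{old_alt_sign_is_new}). The proof is essentially a one-line observation given the machinery already set up.

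First I would note that if $\mathcal D$ is an $n$-representation, then by definition $\mathcal D^2 = 0$, so the curvature form $R_{\mathcal D}\in\Omega(A,\underline{\End}(\E))_2$ defined by $\widehat{R_{\mathcal D}} = \mathcal D^2$ vanishes identically. Consequently $R_{\mathcal D}^l = 0$ for every $l\geq 1$, and therefore
\[
\sigma_A^l(\E) = \left[\widehat{\gtr}(R_{\mathcal D}^l)\right] = 0 \in H^{2l}(A)
\]
for all $l\geq 1$. In particular, the $A$-Pontryagin algebra $\Pont_A^\bullet(\E)$ is trivial whenever $\E$ admits an $n$-representation.

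Next I would invoke Proposition \ref{inv_conn}: the class $\sigma_A^l(\E)$ does not depend on which connection up to homotopy is used to compute it. Therefore one may equally compute it using the degree-preserving connection on $\E$ assembled from any choice of linear $A$-connections $\nabla^i$ on the vector bundles $E_i$ (Example \ref{usual_connections}), which is also a connection up to homotopy. With this choice, Corollary \ref{old_alt_sign_is_new} (or equivalently Example \ref{usual_grd_curvature} together with $\gtr = \sum_i (-1)^i \tr$ on the block pieces) gives
\[
\sigma_A^l(\E) = \sum_{i=0}^{n-1} (-1)^i \sigma_A^l(E_i).
\]
Combining this with the vanishing from the previous step yields exactly \eqref{gen_n_rep_case}.

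There is no genuine obstacle: all the work has been done in proving that the Pontryagin characters are well defined (Proposition \ref{inv_conn}) and in identifying the character of a graded bundle computed from a degree-preserving connection with the alternating sum of the ordinary Pontryagin characters (Corollary \ref{old_alt_sign_is_new}). The only minor point to double-check is that the statement is claimed for $l>1$ whereas the argument gives the identity for all $l\geq 1$; this is harmless since the proposition is in fact stronger than stated (the $l=1$ case of the alternating sum is also zero, but is already implied by the standard vanishing of odd Pontryagin characters, so the restriction $l>1$ in the statement is simply a matter of emphasis on the nontrivial content).
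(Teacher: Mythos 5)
Your proof is correct and follows exactly the paper's own argument: the paper likewise observes that $\mathcal D^2=0$ forces $R_{\mathcal D}=0$, hence $\sigma_A^l(\E)=0$, and then identifies $\sigma_A^l(\E)$ with the alternating sum $\sum_i(-1)^i\sigma_A^l(E_i)$ via Proposition \ref{inv_conn} and Corollary \ref{old_alt_sign_is_new}. Your remark that the identity in fact holds for all $l\geq 1$ is also accurate.
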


      \begin{proof}
Since there is an $n$-connection $\mathcal D$ with $\mathcal D^2=0$, the left-hand side of \eqref{alt_form_pont} vanishes.
        \end{proof}

      \begin{theorem}\label{main_app}
        Let $E$ and $F$ be smooth vector bundles over $M$, and let
        $A\to M$ be a Lie algebroid. If there is a $2$-representation
        of $A$ on $E[0]\oplus F[1]$, then
        \[ \Pont^\bullet_A(E)=\Pont_A^\bullet(F) \subseteq H^\bullet(A).
        \]
        More precisely, the $A$-Pontryagin classes of $E$ equals the $A$-Pontryagin classes of $F$.
      \end{theorem}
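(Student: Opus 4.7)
The plan is to deduce this theorem as the $n=2$ case of the previous theorem, and then to promote the equality of Pontryagin \emph{characters} to an equality of Pontryagin \emph{classes} via the standard algebraic relations between power sums and elementary symmetric polynomials.

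First I would apply the previous theorem to the graded vector bundle $\E = E[0]\oplus F[1]$ with $n=2$. Equation \eqref{gen_n_rep_case} then reads
\[
\sigma^l_A(E) - \sigma^l_A(F) = 0 \in H^{2l}(A)
\]
for all $l \geq 1$, so $\sigma^l_A(E) = \sigma^l_A(F)$ for every $l \geq 1$. (Strictly speaking the previous theorem is stated for $l > 1$, but the $l=1$ case follows by the same argument: an $n$-representation satisfies $\mathcal D^2 = 0$, so $\widehat{\gtr}(R_{\mathcal D}^l) = 0$ and Corollary \ref{old_alt_sign_is_new} gives the same conclusion for all $l \geq 1$.) By the definition of the $A$-Pontryagin algebra as the $\R$-subalgebra of $H^\bullet(A)$ generated by the Pontryagin characters, this already yields $\Pont^\bullet_A(E) = \Pont^\bullet_A(F)$.

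Second, to upgrade this to equality of the individual Pontryagin \emph{classes} $p^i_A(E) = p^i_A(F)$, I would invoke the purely algebraic fact that on $\lie{gl}(k,\R)$ the power sums $\Sigma_i(X) = \tr(X^i)$ and the coefficients $f_i$ of the characteristic polynomial defined in \eqref{def_ps} generate the same subalgebra of $\operatorname{Gl}(k,\R)$-invariant polynomials. Concretely, Newton's identities express each $f_i$ as a universal polynomial in $\Sigma_1, \ldots, \Sigma_i$ and vice versa. Applying such a universal polynomial identity to the curvature of a chosen connection, one obtains a relation in $\Omega^\bullet(A)$, and passing to cohomology expresses $p^i_A(E)$ as a universal polynomial (independent of $E$) in the characters $\sigma^1_A(E), \ldots, \sigma^{2i}_A(E)$, and similarly for $F$. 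Since the characters agree term by term, so do the classes.

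There is no genuine obstacle here: the only subtlety is the usual warning that the relation between $\{\Sigma_i\}$ and $\{f_i\}$ is a formal identity of $\operatorname{Gl}(k,\R)$-invariant polynomials, so the ranks of $E$ and $F$ need not match for the \emph{classes} to be comparable as elements of $H^\bullet(A)$, although in practice $2$-representations of $A$ on $E[0]\oplus F[1]$ can exist with $\rank E \neq \rank F$. One should therefore note that the equality $p^i_A(E) = p^i_A(F)$ is automatic for $i$ larger than $\lfloor \min(\rank E, \rank F)/2\rfloor$ only up to reinterpreting the missing terms as zero, which is coherent with the total Pontryagin class definition at the end of Section \ref{classical_pont}.
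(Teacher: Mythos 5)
Your proposal is correct and follows essentially the same route as the paper: the paper likewise deduces $\sigma^l_A(E)=\sigma^l_A(F)$ for all $l\geq 1$ from \eqref{gen_n_rep_case} and then concludes equality of the Pontryagin algebras and classes because the characters generate them. You merely make explicit two points the paper leaves implicit --- that the $l=1$ case also follows since $\mathcal D^2=0$ kills $\widehat{\gtr}(R_{\mathcal D}^l)$ for every $l\geq 1$, and that Newton's identities express each $f_{2i}$ universally in the $\Sigma_j$ --- both of which are accurate.
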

      \begin{proof}
        In this case, \eqref{gen_n_rep_case} yields immediately
        \[\sigma^l_A(E)=\sigma^l_A(F)\]
        for all $l\geq 1$. Therefore, since the generators of the
        Pontryagin algebras are equal, the Pontryagin algebras and
        the Pontryagin classes of $E$ and $F$ must be equal.
      \end{proof}

      \bigskip
      
        The reader acquainted with the equivalence of decomposed
        VB-algebroids with $2$-representations \cite{GrMe10a}, and of
        decomposed double Lie algebroids with matched pairs of
        $2$-representations \cite{GrJoMaMe18} might find interesting
        the two following corollaries of Theorem \ref{main_app}.
        \begin{corollary}\label{main_2rep}
          Let $B$ and $C$ be smooth vector bundles over $M$, and let
          $(A\to M, \rho, [\cdot\,,\cdot])$ be a Lie algebroid. If
          there is a VB-algebroid $(D\to B, A\to M)$ with core $C$,
          then the total Pontryagin classes coincide:
          \[p_A(B)=p_A(C) \in H^\bullet(A).
          \]
          That is,
          $  \rho^\star p^l(B)=\rho^\star p^l(C)$ for all $l\geq 1$.
        \end{corollary}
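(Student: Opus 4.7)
The plan is to reduce the corollary directly to Theorem \ref{main_app} via the Gracia-Saz--Mehta correspondence between decomposed VB-algebroids and $2$-representations.

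First, I would choose a horizontal lift (a decomposition) of the VB-algebroid $(D\to B, A\to M)$. By the main theorem of \cite{GrMe10a}, such a decomposition is equivalent to the data of a $2$-representation of $A$ on the graded vector bundle $B[0]\oplus C[1]$ (the exact assignment of degrees to $B$ and $C$ depends on the convention, but what matters for the argument is only that the two terms of the $2$-term graded vector bundle are $B$ and $C$, and that the existence of the VB-algebroid structure on $D$ is what produces the $2$-representation, regardless of the chosen decomposition).

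Next, I would invoke Theorem \ref{main_app}: applied to this $2$-representation of $A$ on $B[0]\oplus C[1]$ (or $C[0]\oplus B[1]$), it immediately yields
\[
p^l_A(B)=p^l_A(C)\in H^{4l}(A)
\]
for all $l\geq 1$. Summing over $l$ gives the equality of total $A$-Pontryagin classes $p_A(B)=p_A(C)\in H^\bullet(A)$, which is the first claim. The second equivalent statement $\rho^\star p^l(B)=\rho^\star p^l(C)$ then follows from Remark \ref{imp_rem}, which identifies $p^l_A(V)=\rho^\star p^l(V)$ for any vector bundle $V\to M$ of finite rank.

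There is no real obstacle: the statement is a direct corollary once one has the two inputs (the Gracia-Saz--Mehta equivalence and Theorem \ref{main_app}). The only minor point to keep in mind is that a priori the $2$-representation on $B[0]\oplus C[1]$ depends on the choice of decomposition of $D$, but the conclusion of Theorem \ref{main_app} depends only on the existence of some $2$-representation on this graded vector bundle; alternatively, the independence of $\sigma_A^l(B[0]\oplus C[1])$ from the choice of connection up to homotopy (Proposition \ref{inv_conn}) ensures that the resulting Pontryagin identity is intrinsic to the VB-algebroid.
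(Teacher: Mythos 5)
Your proposal is correct and follows exactly the route the paper intends: the paper states Corollary \ref{main_2rep} as an immediate consequence of Theorem \ref{main_app} combined with the Gracia-Saz--Mehta equivalence between decomposed VB-algebroids and $2$-representations, which is precisely your argument. The only cosmetic remark is that the standard convention places the core in degree $0$, i.e.\ the $2$-representation lives on $C[0]\oplus B[1]$, but as you correctly note the conclusion of Theorem \ref{main_app} is symmetric in the two terms, so this does not affect anything.
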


        \begin{corollary}
          Let $C$ be a  smooth vector bundle over $M$, and let
          $A\to M$ and $B\to M$ be two Lie algebroids. If there is a double Lie algebroid
          $(D, A, B, M)$ with core $C$, then
          \[ p_A(C)=p_A(B) \in H^\bullet(A),
        \quad \text{ and } \quad  p_B(C)=p_B(A) \in H^\bullet(B).
          \]
        \end{corollary}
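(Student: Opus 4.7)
The plan is to apply Corollary \ref{main_2rep} twice, exploiting the two VB-algebroid structures that together comprise the given double Lie algebroid $(D, A, B, M)$.

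First I would recall the definition: a double Lie algebroid $(D, A, B, M)$ consists of a double vector bundle with side bundles $A \to M$ and $B \to M$ and core $C \to M$, together with two Lie algebroid structures on $D$ -- one with base $A$ and one with base $B$ -- such that the pair $(D \to A, D \to B)$ forms a VB-algebroid in each direction. In particular, $(D \to B, A \to M)$ is a VB-algebroid with core $C$, and $(D \to A, B \to M)$ is a VB-algebroid with the same core $C$. The identification of the cores of these two VB-algebroid structures is part of the compatibility built into the definition of a double Lie algebroid.

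Next I would apply Corollary \ref{main_2rep} to each VB-algebroid structure separately. Applied to $(D \to B, A \to M)$ with core $C$, the corollary yields the equality of total $A$-Pontryagin classes
\[ p_A(B) = p_A(C) \in H^\bullet(A). \]
Applied to $(D \to A, B \to M)$ with core $C$ (that is, with the roles of the side bundles interchanged), the corollary yields
\[ p_B(A) = p_B(C) \in H^\bullet(B). \]
Combining these two equalities gives exactly the statement of the corollary.

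There is essentially no obstacle here beyond unpacking the definition: the substantive work has already been done in Theorem \ref{main_app} and its VB-algebroid consequence Corollary \ref{main_2rep}. The only point to check carefully is that the core of the two VB-algebroid structures on $D$ is indeed the same bundle $C$, which is a direct consequence of the double vector bundle structure underlying $(D, A, B, M)$.
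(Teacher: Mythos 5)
Your proposal is correct and matches the paper's intended argument: the paper presents this as an immediate consequence of Theorem \ref{main_app} via the two VB-algebroid structures $(D\to B, A\to M)$ and $(D\to A, B\to M)$ underlying the double Lie algebroid, each with core $C$, which is exactly what you do by applying Corollary \ref{main_2rep} twice.
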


      \subsubsection{Example: the double $2$-representation defined by a connection}\label{sec_double_rep}
      Let $A\to M$ be a Lie algebroid and $E$ a vector bundle over
      $M$. Then any linear $A$-connection
      $\nabla\colon\Gamma(A)\times\Gamma(E)\to\Gamma(E)$ defines as
      follows a representation up to homotopy of $A$ on
      $E[0]\oplus E[1]$, see \cite{ArCr12,GrMe10a}. The operator
      \[
        \mathcal D\colon \Omega(A,E[0]\oplus E[1])_\bullet\to \Omega(A,E[0]\oplus E[1])_{\bullet+1}
      \]
      is defined by
      \[ \mathcal D(e_0)=\dr_\nabla(e_0)+e_0\in\Omega^1(A,E[0])\oplus \Omega^0(A,E[1])
      \]
      for $e_0\in\Omega^0(A,E[0])=\Gamma(E[0])$, and
      \[ \mathcal D(e_1)=\dr_\nabla(e_1)-\widehat{R_\nabla}(e_1)\in\Omega^1(A,E[1])\oplus \Omega^2(A,E[0])
      \]
      for $e_1\in\Omega^0(A,E[1])=\Gamma(E[1])$. Here, $R_\nabla$ is
      seen as an element of\linebreak
      $\Omega^2(A,\Hom(E[1],E[0]))\subseteq
      \Omega(A,\underline{\End}(\E))_1$.  It is easy to
      check that $\mathcal D^2=0$.
      This representation up to homotopy is called the \emph{double
        representation up to homotopy} of $A$ on $E$ \cite{ArCr12,GrMe10a}.


          Let $K\subseteq E$ be a vector subbundle. Take an
          $A$-connection $\nabla^K$ on $K$ and an $A$-connection
          $\bar\nabla$ on $E/K$. Then $K\oplus E/K\simeq E$ and the
          sum $\nabla^K+\bar\nabla$ defines an $A$-connection on
          $E$. The $A$-Pontryagin characters of $E$, $K$ and $E/K$ satisfy
        \[ \sigma_A^i(E)=\sigma_A^i(K)+\sigma_A^i(E/K) \in H^{2i}(A)
        \]
        for $i\geq 0$. This is usually formulated as
        $p_A(E)=p_A(K)\wedge p_A(E/K)$ (see e.g.~\cite{MiSt74,Tu17}).
        In other words the generators of $\Pont^\bullet_A(E/K)$ are
        given by
        \begin{equation}\label{gen_class_EK}
          \sigma_A^i(E/K)=\sigma_A^i(E)-\sigma_A^i(K) \in H^{2i}(A)
        \end{equation}
        for $i\geq 0$.  Likewise, the linear $A$-connections on $K$
        and on $E$ define together a $2$-connection $\mathcal D$ of
        $A$ on $K[0]\oplus E[1]$. Hence, the $A$-Pontryagin characters
        of $K[0]\oplus E[1]$ are
        \begin{equation}\label{gen_class_EK}
         \widehat{\gtr}(R^i_{\mathcal D})=\sigma_A^i(K)-\sigma_A^i(E) \in H^{2i}(A)
        \end{equation}
        for $i\geq 0$. Up to a sign, they equal the generators of $\Pont^\bullet_A(E/K)$. This yields the following proposition.
        \begin{proposition}\label{prop_eq_of_ponts}
          Let $E\to M$ be a smooth vector bundle, and let $A$ be a Lie algebroid over $M$. Let $K\subseteq E$ be a vector subbundle of $E$.
          Then
        \begin{equation}\label{eq_of_ponts}
          \Pont^\bullet_A(K[0]\oplus E[1])=\Pont^\bullet_A(E/K).
          \end{equation}
     \end{proposition}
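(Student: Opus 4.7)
The plan is to show that the two Pontryagin subalgebras of $H^\bullet(A)$ have the same (up to sign) generators, after which equality as $\R$-subalgebras follows immediately.

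First, I would apply Corollary \ref{old_alt_sign_is_new} to the graded vector bundle $K[0]\oplus E[1]$ to obtain
\[
\sigma_A^i(K[0]\oplus E[1])=\sigma_A^i(K)-\sigma_A^i(E)\in H^{2i}(A)
\]
for every $i\geq 1$. This is exactly the observation made just before the statement, using the $2$-connection $\mathcal D=\dr_{\nabla^K}+\dr_{\nabla^E}$ induced by linear $A$-connections $\nabla^K$ on $K$ and $\nabla^E$ on $E$ (which is a genuine $2$-connection in the sense of Example \ref{usual_connections}, not a $2$-representation unless $\nabla^K$ and $\nabla^E$ are both flat).

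Next, I would invoke the Whitney sum formula for $A$-Pontryagin characters applied to the (non-canonical, smoothly chosen) splitting $E\simeq K\oplus E/K$: equipping $K$ with $\nabla^K$ and $E/K$ with some $A$-connection $\bar\nabla$, the block-diagonal connection on $E$ has curvature $R_{\nabla^K}\oplus R_{\bar\nabla}$, whose trace powers split additively. This yields
\[
\sigma_A^i(E)=\sigma_A^i(K)+\sigma_A^i(E/K)\in H^{2i}(A),
\]
and by independence of the choice of connection (Proposition \ref{inv_conn} or its classical analogue) this holds regardless of the splitting. Combining the two displays gives
\[
\sigma_A^i(K[0]\oplus E[1])=-\sigma_A^i(E/K)
\]
for every $i\geq 1$.

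Since $\Pont_A^\bullet(K[0]\oplus E[1])$ is the $\R$-subalgebra of $H^\bullet(A)$ generated by the $\sigma_A^i(K[0]\oplus E[1])$ and $\Pont_A^\bullet(E/K)$ is generated by the $\sigma_A^i(E/K)$, and the two generating families coincide up to a global sign, the two subalgebras of $H^\bullet(A)$ coincide, proving \eqref{eq_of_ponts}. No step is genuinely hard; the only subtle point is making clear that the Whitney sum identity for $A$-Pontryagin characters is available, but this follows from a block-diagonal curvature calculation identical to the classical one after pulling back via $\rho^\star$ as in Remark \ref{imp_rem}.
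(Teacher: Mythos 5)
Your proof is correct and follows essentially the same route as the paper: both arguments combine the alternating-sign formula of Corollary \ref{old_alt_sign_is_new} for the $2$-connection on $K[0]\oplus E[1]$ with the Whitney sum identity $\sigma_A^i(E)=\sigma_A^i(K)+\sigma_A^i(E/K)$ to conclude that the generators of the two subalgebras agree up to sign. The only cosmetic difference is that you justify the Whitney sum formula by the block-diagonal curvature computation, whereas the paper simply cites it.
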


\subsubsection{Example: the adjoint $2$-representation of a Lie algebroid}\label{sec_adjoint}
Let $A\to M$ be a Lie algebroid with anchor $\rho$ and Lie bracket
$[\cdot\,,\cdot]$. Then any choice of linear connection
$\nabla\colon\mx(M)\times\Gamma(A)\to\Gamma(A)$ defines as follows a
representation up to homotopy of $A$ on $A[0]\oplus TM[1]$, see
\cite{ArCr12,GrMe10a}. The operator
\[
        \mathcal D_{\ad}\colon \Omega(A,A[0]\oplus TM[1])_\bullet\to \Omega(A,A[0]\oplus TM[1])_{\bullet+1}
      \]
      is defined by
      \[ \mathcal D_{\ad}(a)=\dr_{\nabla^{\rm bas}}(a)+\rho(a)\in\Omega^1(A,A[0])\oplus \Omega^0(A,TM[1])
      \]
      for $a\in\Omega^0(A,A[0])=\Gamma(A)$, and
      \[ \mathcal D_{\ad}(X)=\dr_{\nabla^{\rm bas}}(X)-\widehat{R^{\rm  bas}_\nabla}(X)\in\Omega^1(A,TM[1])\oplus \Omega^2(A,A[0])
      \]
      for $X\in\Omega^0(A,TM[1])=\mx(M)$. Here, $R^{\rm bas}_\nabla\in\Omega^2(A,\Hom(X,A))$ is defined by
      \[R^{\rm bas}_\nabla(a,b)X=-\nabla_X[a,b]+[\nabla_Xa,b]+[a,\nabla_Xb]+\nabla_{\nabla_b^{\rm bas}X}a-\nabla_{\nabla_a^{\rm bas}X}b        \]
        for $a,b\in\Gamma(A)$ and $X\in\mx(M)$, and the two basic
        connections
      \[\nabla^{\rm bas}\colon\Gamma(A)\times\mx(M)\to\mx(M)  \quad\text{ and } \quad \nabla^{\rm bas}\colon\Gamma(A)\times\Gamma(A)\to\Gamma(A)
      \]
      are defined by
      \[ \nabla^{\rm bas}_aX=[\rho(a),X]+\rho(\nabla_Xa), \qquad
        \nabla^{\rm bas}_ab=[a,b]+\nabla_{\rho(b)}a
      \]
      for $a,b\in\Gamma(A)$ and $X\in\mx(M)$.  A computation shows
      $R_{\mathcal D_{\ad}}=\mathcal D_{\ad}^2=0$.

      This representation up to homotopy is called the
      \emph{adjoint representation up to homotopy} of $A$ on $E$
      \cite{ArCr12,GrMe10a}.  The following result follows from Theorem \ref{main_app}
     \begin{theorem}
  Let $A$ be a vector bundle over a smooth manifold $M$, and let
  $\rho\colon A\to TM$ be a vector bundle morphism over the identity.
  If $A\to M$ carries a Lie algebroid structure with anchor $\rho$, then
        \[ \rho^\star \left(p^l(A)\right)=\rho^\star \left(p^l(TM) \right)\in H^{4l}(A)
        \]
        for all $l\geq 1$.
      \end{theorem}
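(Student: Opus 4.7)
The plan is to invoke the adjoint representation up to homotopy as the bridge between Theorem \ref{main_app} and the classical Pontryagin classes of $A$ and $TM$, then pull back along $\rho$ via Remark \ref{imp_rem}.

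First, I would choose any ordinary linear connection $\nabla\colon\mx(M)\times\Gamma(A)\to\Gamma(A)$. As recalled in \S\ref{sec_adjoint}, this datum, together with the anchor $\rho$ and the Lie bracket on $\Gamma(A)$, produces the adjoint representation up to homotopy $\mathcal D_{\ad}$ of $A$ on the graded vector bundle $A[0]\oplus TM[1]$. This is an honest $2$-representation: the fact that $\mathcal D_{\ad}^{\,2}=0$ is precisely the statement that the basic curvature $R^{\rm bas}_\nabla$ together with the basic connections assemble into a flat superconnection, and it is exactly here that the Lie algebroid axioms (Jacobi identity and Leibniz rule for the anchor) are used.

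Next, Theorem \ref{main_app} applied to $E=A$ and $F=TM$ yields immediately
\[
p_A^l(A)=p_A^l(TM)\in H^{4l}(A)
\]
for every $l\geq 1$, with no further computation needed.

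Finally, I would translate this equality of $A$-Pontryagin classes into the desired equality of pullbacks of classical Pontryagin classes. By Remark \ref{imp_rem}, for any vector bundle $E\to M$ and any $\operatorname{Gl}(k,\R)$-invariant polynomial $p$ one has $\mathrm{cw}_A(E)=\rho^\star\circ\mathrm{cw}(E)$; applying this to the invariant polynomials $f_{2l}$ that define the Pontryagin classes gives $p_A^l(E)=\rho^\star p^l(E)$ both for $E=A$ and for $E=TM$. Combining the two identities yields
\[
\rho^\star p^l(A)=p_A^l(A)=p_A^l(TM)=\rho^\star p^l(TM)\in H^{4l}(A),
\]
which is the claim. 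No step is really an obstacle here since all the heavy lifting has already been packaged into Theorem \ref{main_app} and Remark \ref{imp_rem}; the only thing to be careful about is to note that the theorem is a direct corollary of these two results via the adjoint $2$-representation, and that the independence of the choice of $\nabla$ is built into Proposition \ref{inv_conn}.
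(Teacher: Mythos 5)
Your proposal is correct and follows exactly the paper's own route: the adjoint $2$-representation of $A$ on $A[0]\oplus TM[1]$ (Section on the adjoint representation) feeds into Theorem \ref{main_app} to give $p_A^l(A)=p_A^l(TM)$, and Remark \ref{imp_rem} converts this to the stated equality of pullbacks $\rho^\star p^l(A)=\rho^\star p^l(TM)$. The paper leaves these steps implicit ("follows from Theorem \ref{main_app}"), so your write-up is simply a more explicit version of the intended argument.
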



      \subsubsection{Example: the  $2$-representations defined by a morphism of Lie algebroids}\label{sec_adjoint}
      More generally, let $A\to M$ and $B\to M$ be two Lie algebroids,
      with a Lie algebroid morphism $\partial\colon B\to A$ over the
      identity on $M$.  Then any choice of linear connection
      $\nabla\colon\Gamma(A)\times\Gamma(B)\to\Gamma(B)$ defines as
      follows a representation up to homotopy of $B$ on
      $B[0]\oplus A[1]$ -- this was found in the work in preparation
      \cite{JoMa14}.

      The operator
\[\mathcal D\colon \Omega(B,B[0]\oplus A[1])_\bullet\to \Omega(B,B[0]\oplus A[1])_{\bullet+1}
      \]
      is defined by
      \[ \mathcal D(b)=\dr_{\nabla^{\partial}}(b)+\partial(b)\in\Omega^1(B,B[0])\oplus \Omega^0(B,A[1])
      \]
      for $b\in\Omega^0(B,B[0])=\Gamma(B)$, and
      \[ \mathcal D(X)=\dr_{\nabla^{\partial}}(a)-\widehat{R^{\partial}_\nabla}(a)\in\Omega^1(B,A[1])\oplus \Omega^2(B,B[0])
      \]
      for $a\in\Omega^0(B,A[1])=\Gamma(A)$. Here, $R^{\partial}_\nabla\in\Omega^2(A,\Hom(X,A))$ is defined by
      \[R^{\partial}_\nabla(b_1,b_2)a=-\nabla_a[b_1,b_2]+[\nabla_ab_1,b_2]+[b_1,\nabla_ab_2]+\nabla_{\nabla_{b_2}^{\partial}a}b_1-\nabla_{\nabla_{b_1}^{\partial}a}b_2        \]
        for $b_1,b_2\in\Gamma(B)$ and $a\in\Gamma(A)$, and the two
        connections
      \[\nabla^{\partial}\colon\Gamma(B)\times\Gamma(B)\to\Gamma(B)  \quad\text{ and } \quad \nabla^{\partial}\colon\Gamma(B)\times\Gamma(A)\to\Gamma(A)
      \]
      are defined by
      \[ \nabla^{\partial}_{b_1}b_2=[b_1,b_2]+\nabla_{\partial b_2}b_1, \qquad
        \nabla^{\partial}_ba=[\partial(b),a]+\partial(\nabla_{a}b)
      \]
      for $b_1,b_2\in\Gamma(B)$ and $a\in\Gamma(A)$.  A computation
      shows $\mathcal D^2=0$ and so $R_{\mathcal D}=0$.  The following
      result follows then from Theorem \ref{main_app}.
      \begin{theorem}
        Let $A$ and $B$ be Lie algebroids over $M$. If there is a Lie
        algebroid morphism $\partial\colon B\to A$ over the identity
        on $M$, then
        \[p_B^l(A)=p_B^l(B)
        \]
        for all $l\geq 1$.
      \end{theorem}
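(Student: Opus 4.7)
The plan is to apply Theorem \ref{main_app} directly to the $2$-representation constructed immediately before the statement. Having chosen any linear connection $\nabla\colon \Gamma(A)\times\Gamma(B)\to\Gamma(B)$, the paragraph above defines an operator $\mathcal D$ of total degree $1$ on $\Omega(B, B[0]\oplus A[1])_\bullet$ via the basic connections $\nabla^\partial$ and the basic curvature $R^\partial_\nabla$, and asserts $\mathcal D^2=0$. Once this is granted, $\mathcal D$ is a $2$-representation of the Lie algebroid $B\to M$ on the graded vector bundle $B[0]\oplus A[1]$ (i.e.\ with $E=B$ and $F=A$), and Theorem \ref{main_app} immediately yields $p^l_B(B)=p^l_B(A)$ in $H^{\bullet}(B)$ for all $l\geq 1$.

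Thus the entire content of the theorem is contained in the flatness of $\mathcal D$, which is the one step not spelled out in the excerpt. In the notation of Example \ref{2_conn_explicit}, one has $\nabla^0=\nabla^\partial$ on $B$, $\nabla^1=\nabla^\partial$ on $A$, the map $E_0\to E_1$ is $\partial\colon B\to A$, and $\omega=-R^\partial_\nabla\in\Omega^2(B,\Hom(A,B))$. The four identities that the example tells us are equivalent to $\mathcal D^2=0$, namely
\[
R_{\nabla^\partial}^B+\omega\circ\partial=0,\quad R_{\nabla^\partial}^A+\partial\circ\omega=0,\quad \nabla^\partial\circ\partial=\partial\circ\nabla^\partial,\quad \dr_{\nabla^{\End}}\omega=0,
\]
must then be verified by plugging in the defining formulas and using that $\partial\colon B\to A$ is a Lie algebroid morphism, i.e.\ $\rho_A\circ\partial=\rho_B$ and $\partial[b_1,b_2]_B=[\partial b_1,\partial b_2]_A$. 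The compatibility $\nabla^\partial\circ\partial=\partial\circ\nabla^\partial$ follows by a direct expansion using these two identities; the two curvature identities follow by rewriting the basic curvatures in terms of $R^\partial_\nabla$ with the help of the morphism property; and the last identity is the Bianchi-type statement whose proof parallels the corresponding computation for the adjoint representation in Section \ref{sec_adjoint}.

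The only real obstacle, therefore, is this bookkeeping verification that $\mathcal D^2=0$; the characteristic class content is then an immediate consequence of Theorem \ref{main_app}, with no new Chern--Weil work required. In fact the same strategy subsumes the earlier theorem about $\rho\colon A\to TM$, which is simply the special case $B=A$, $A=TM$, $\partial=\rho$, and that agreement is a useful consistency check on the construction.
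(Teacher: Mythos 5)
Your proposal is correct and follows exactly the paper's own route: the paper likewise constructs the $2$-representation of $B$ on $B[0]\oplus A[1]$ from the connections $\nabla^\partial$ and the curvature $R^\partial_\nabla$, asserts $\mathcal D^2=0$ by "a computation", and then invokes Theorem \ref{main_app}. Your additional sketch of how the flatness verification reduces to the morphism identities $\rho_A\circ\partial=\rho_B$ and $\partial[b_1,b_2]_B=[\partial b_1,\partial b_2]_A$ is consistent with (and slightly more explicit than) what the paper records.
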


      Vaisman defines characteristic classes of morphisms of Lie
      algebroids in \cite{Vaisman10}; by considering the graphs of
      these morphisms. The result above does not consider these
      classes; but it would be interesting to compare the two
      approaches.
      
        \subsection{Bott's vanishing theorem for  graded vector bundles}
        This section proves a more general formulation of Bott's
        vanishing theorem \cite{Bott72} and of Theorem \ref{main}, on
        Lie subalgebroids with $n$-representations. 

        For $B\subseteq A$ a subalgebroid, the space
        $\Omega(B,\E)_\bullet$ can be (non-canonically) embedded as
        follows as a $C^\infty(M)$-submodule of
        $\Omega(A,\E)_\bullet$. Fix $C\subseteq A$ a subbundle such
        that $A=B\oplus C$.  Then the $C^\infty(M)$-linear map
        $i_C\colon\Omega(B,\E)_\bullet\to\Omega(A,\E)_\bullet$ is
        defined by
        \[i_C(\omega)(a_1,\ldots,a_s)=\omega(b_1,\ldots,b_s)
          \]
          for $\omega\in \Omega^s(B,E_i)$ and
          $a_j=b_j+c_j\in A=B\oplus C$ for $j=1,\ldots,s$.  In the
          same manner, 
          $i_C\colon\Omega(B,\underline{\End}(\E))_\bullet\to\Omega(A,\underline{\End}(\E))_\bullet$ is defined.

          In addition, the inclusion $\iota\colon B\to A$ induces the $C^\infty(M)$-linear restriction map
          \[ \iota^\star\colon \Omega(A,\E)_\bullet\to \Omega(B,\E)_\bullet,
          \]
          defined by
          $(\iota^\star\omega)(b_1,\ldots,b_s)=\omega(b_1,\ldots,b_s)$
          for $\omega\in\Omega^s(A,E_l)$. By construction,
          $\iota^\star\circ
          i_C=\Id_{\Omega(B,\underline{\End}(\E))_\bullet}$.

          \bigskip

        Let now $A\to M$ be a Lie algebroid and
        $\E=E_0[0]\oplus\ldots\oplus E_{n-1}[n-1]$ be a graded vector
        bundle over $M$. Let $k$ be the rank of $\E$.  Assume that
        there is a Lie subalgebroid $B\subseteq A$ of codimension $q$,
        with an $n$-representation
        \[ \mathcal D\colon \Omega(B,\E)_\bullet\to\Omega(B,\E)_{\bullet+1}.
        \]
        Then, as in Proposition \ref{dec_n_conn}, the
        $n$-representation $\mathcal D$ equals
        $\mathcal D=\dr_\nabla+\widehat{D}$, with a $B$-connection on
        $\E$ preserving the grading, and a form
        $D\in\Omega(B,\underline{\End}(\E))_1$. Using the second part
        of Proposition \ref{dec_n_conn}, without loss of generality
        $D$ has no component in
        $\Omega^1(B,\underline{\End}(\E)_0)$. Extend the
        $B$-connection $\nabla$ on $\E$ to an $A$-connection
        $\tilde \nabla$ on $\E$ that preserves the grading, and extend
        the form $D$ to the form
        $i_C(D)\in\Omega(A,\underline{\End}(\E))_1$, after the choice
        of a smooth complement $C$ of $B$ in $A$.

        Then
        \[ \widetilde{\mathcal D}=\dr_{\tilde\nabla}+\widehat{i_C(D)}\colon
          \Omega(A,\E)_\bullet\to\Omega(A,\E)_{\bullet+1}
        \]
        is an $n$-connection of $A$ on $\E$.
        Take $\omega\in \Omega^s(A,E_l)$. Then
        \[\widetilde{\mathcal D}(\omega)=\sum_{i=0}^{\rank
            A}(\widetilde{\mathcal D}\omega)_i\in
          \bigoplus_{i=0}^{\rank A}\Omega^i(A,E_{s+l+1-i})
        \]
        and easy computations yield the following identities:
        \begin{itemize}
        \item For $i=s+1$: \begin{equation*}
            \begin{split}
              (\widetilde{\mathcal D}\omega)_i(b_1,\ldots,b_i)&=(\dr_{\tilde\nabla}\omega)(b_1,\ldots,b_i)=\dr_\nabla(\iota^\star\omega)(b_1,\ldots,b_i)\\
              &=(\mathcal D(\iota^\star\omega))_i(b_1,\ldots,b_i)
            \end{split}
            \end{equation*}
         for $b_1,\ldots,b_i\in\Gamma(B)$, and 
        \item For $i\neq s+1$:
          \begin{equation*}
            \begin{split}
              (\widetilde{\mathcal D}\omega)_i(b_1,\ldots,b_i)&=(\widehat{i_C(D)}\omega)_i(b_1,\ldots,b_i)=(\widehat{D}(\iota^{\star}\omega))_i(b_1,\ldots,b_i)\\
              &=(\mathcal D(\iota^\star\omega))_i(b_1,\ldots,b_i)
              \end{split}
            \end{equation*}
            for $b_1,\ldots,b_i\in\Gamma(B)$.
          \end{itemize}
        
          This proves
          $\iota^\star\circ \widetilde{\mathcal D}=\mathcal D\circ
          \iota^\star$ and as a consequence $\iota^\star\circ \widetilde{\mathcal D}^2=\mathcal D^2\circ
          \iota^\star$. Therefore, the equality $\mathcal D^2=0$ yields
          $\iota^\star(R_{\tilde{\mathcal D}})=\iota^\star(\widetilde{\mathcal D}^2)=0$.
          That is, \[R_{\tilde{\mathcal D}}\in
          \left(I^\bullet(B)\otimes_{C^\infty(M)}\Gamma(\End(\E))\right)_2=\bigoplus_{j\geq 1}  I^j(B)\otimes_{C^\infty(M)}\Gamma(\End(\E)_{2-j}),\]
        and so
        \[ R_{\tilde{\mathcal D}}^l\in (I^\bullet(B))^l\otimes_{C^\infty(M)}\Gamma(\End(\E))
        \] for all $l\geq 1$.  This yields
        $R_{\tilde{\mathcal D}}^l=0$ for $l>q$, and, as in the
        classical case, the following theorem.

        \begin{theorem}\label{Bott_gen_graded}
           Let $A\to M$ be a Lie algebroid and let
        $\E=E_0[0]\oplus\ldots\oplus E_{n-1}[n-1]$ be a
        graded vector bundle over $M$. Assume that there is a Lie
        subalgebroid $B\subseteq A$ of codimension $q$, with an $n$-representation
        \[ \mathcal D\colon \Omega(B,\E)_\bullet\to\Omega(B,\E)_{\bullet+1}.
        \]
        Then the $A$-Pontryagin spaces of the graded vector bundle $\E$
        \[ \Pont_A^l(\E)\subseteq H^l(A)
        \]
        all vanish for $l>2q$.
      \end{theorem}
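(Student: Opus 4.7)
The plan is to follow the strategy already anticipated in the discussion preceding the statement: reduce to computing $A$-Pontryagin characters from an extension of $\mathcal D$ to an $A$-connection on $\E$, and then exploit the fact that such an extension has curvature lying inside the ideal $I^\bullet(B)\subseteq\Omega^\bullet(A)$ generated by $\Gamma(B^\circ)$, exactly as in the proof of Theorem \ref{main}.

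First, I would apply Proposition \ref{dec_n_conn} to write $\mathcal D=\dr_\nabla+\widehat D$ with $\nabla$ a degree-preserving linear $B$-connection on $\E$ and $D\in\bigoplus_{s\neq 1}\Omega^s(B,\underline{\End}(\E)_{1-s})$. Fix a complement $C$ so that $A=B\oplus C$, extend $\nabla$ to a degree-preserving $A$-connection $\tilde\nabla$ on $\E$, and lift $D$ to $i_C(D)\in\Omega(A,\underline{\End}(\E))_1$. Set $\widetilde{\mathcal D}:=\dr_{\tilde\nabla}+\widehat{i_C(D)}$, an $n$-connection of $A$ on $\E$ which, by Proposition \ref{inv_conn}, may be used to represent $\sigma_A^l(\E)$.

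The key technical step is the intertwining identity $\iota^\star\circ\widetilde{\mathcal D}=\mathcal D\circ\iota^\star$. I would verify it by decomposing $\widetilde{\mathcal D}\omega$, for $\omega\in\Omega^s(A,E_l)$, into its form-degree components $(\widetilde{\mathcal D}\omega)_i$: in form-degree $s+1$, only $\dr_{\tilde\nabla}$ contributes and restricts to $\dr_\nabla$; in every other form-degree, only $\widehat{i_C(D)}$ contributes and restricts to $\widehat D$. This is precisely where the second clause of Proposition \ref{dec_n_conn} -- that $D$ has no component in $\Omega^1(B,\underline{\End}(\E)_0)$ -- is essential, since otherwise $\dr_{\tilde\nabla}$ and $\widehat{i_C(D)}$ would collide in form-degree $s+1$. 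Squaring yields $\iota^\star R_{\widetilde{\mathcal D}}=\iota^\star\widetilde{\mathcal D}^2=\mathcal D^2\circ\iota^\star=0$, and since $\iota^\star$ acts as the identity on $0$-forms, this forces
\[
R_{\widetilde{\mathcal D}}\in\bigoplus_{j\geq 1}I^j(B)\otimes_{C^\infty(M)}\Gamma(\underline{\End}(\E)_{2-j}).
\]

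From here the argument is formally identical to the one for Theorem \ref{main}: $R_{\widetilde{\mathcal D}}^l$ lies in $(I^\bullet(B))^l\otimes_{C^\infty(M)}\Gamma(\underline{\End}(\E))$, and since each factor in an $l$-fold product from $I^\bullet(B)$ contributes at least one section of $B^\circ$, such a product lies in $\Gamma(\wedge^{\geq l}B^\circ)\wedge\Omega^\bullet(A)\otimes_{C^\infty(M)}\Gamma(\underline{\End}(\E))$, which vanishes once $l>q=\rank B^\circ$. Applying $\widehat{\gtr}$ gives $\sigma_A^l(\E)=0$ for $l>q$; more generally, any element of $\Pont_A^d(\E)$ with $d>2q$ is a linear combination of products $\widehat{\gtr}(R_{\widetilde{\mathcal D}}^{i_1})\wedge\cdots\wedge\widehat{\gtr}(R_{\widetilde{\mathcal D}}^{i_r})$ with $2(i_1+\cdots+i_r)=d$, hence lies in $(I^\bullet(B))^{i_1+\cdots+i_r}$ with $i_1+\cdots+i_r>q$, and therefore vanishes. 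The main obstacle is the intertwining identity $\iota^\star\circ\widetilde{\mathcal D}=\mathcal D\circ\iota^\star$, which depends delicately on the form-degree bookkeeping afforded by Proposition \ref{dec_n_conn}; everything that follows is then a transport of the classical Bott argument through the graded curvature and the graded trace.
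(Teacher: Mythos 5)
Your proposal is correct and follows essentially the same route as the paper: the decomposition $\mathcal D=\dr_\nabla+\widehat D$ with $D$ free of $\Omega^1(B,\underline{\End}(\E)_0)$-components, the extension $\widetilde{\mathcal D}=\dr_{\tilde\nabla}+\widehat{i_C(D)}$, the degree-by-degree verification of $\iota^\star\circ\widetilde{\mathcal D}=\mathcal D\circ\iota^\star$, and the conclusion $R_{\widetilde{\mathcal D}}\in\bigoplus_{j\geq 1}I^j(B)\otimes_{C^\infty(M)}\Gamma(\underline{\End}(\E)_{2-j})$ forcing $R_{\widetilde{\mathcal D}}^l=0$ for $l>q$. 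Your closing remark spelling out why arbitrary products of Pontryagin characters of total degree $>2q$ vanish is a welcome elaboration of the paper's ``as in the classical case''.
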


      \begin{example}
        Let $E\to M$ be a smooth vector bundle, and let $A$ be a Lie
        algebroid over $M$. Let $K\subseteq E$ be a vector subbundle
        of $E$ and let $B\subseteq A$ be a subalgebroid.  Consider a
        linear $A$-connection $\nabla$ on $E$, that preserves $K$.
        Define the linear $B$-connection
        $\bar\nabla\colon\Gamma(B)\times\Gamma(E/K)\to\Gamma(E/K)$ by
        $\bar\nabla_b\overline{e}=\overline{\nabla_be}$
        for all $b\in\Gamma(B)$ and $e\in\Gamma(E)$, where
        $\overline{e}\in\Gamma(E/K)$ is the class of the section $e$.

        The connection $\bar\nabla$ is flat if and only if the
        $2$-representation of $A$ on $E[0]\oplus E[1]$ defined by
        $\nabla$ as in \S\ref{sec_double_rep} restricts to a
        $2$-representation of $B$ on $K[0]\oplus E[1]$; see
        \cite{DrJoOr15}. Then, by Theorem \ref{Bott_gen_graded},
        \[ \Pont_A^l(K[0]\oplus E[1])\subseteq H^l(A)
        \]
        all vanish for $l>2q$. By \eqref{eq_of_ponts}, this
        is  a reformulation of Theorem \ref{main} in the graded setting.
     \end{example}

\section{Infinitesimal ideal systems and Pontryagin classes}\label{iis_sec}
The main motivation for the results above was the search for
obstructions to the existence of infinitesimal ideal systems in a
given Lie algebroid, in terms of the $A$ and $TM$-Pontryagin classes
of $A$ and $TM$. This section first recalls some of the main examples
of infinitesimal ideal systems. Then the first and second subsections
present the obtained obstructions.  \medskip

Recall that infinitesimal ideal systems are defined as in the  Definition on Page \pageref{iis_def}.
The three main classes of examples of infinitesimal ideal systems are
the following. 
\begin{example}[The usual notion of ideals in Lie algebroids]\label{naive_ideal}
  An ideal $I$ in a Lie algebroid $A\to M$ is a subbundle over $M$
  such that $[a,i]\in\Gamma(I)$ for all $i\in\Gamma(I)$ and all
  $a\in\Gamma(A)$.  The inclusion $I\subseteq\ker(\rho)$ follows
  immediately and shows that this definition of an ideal is very
  restrictive. These ideals, called here \emph{naive ideals},
  correspond obviously to the ideal systems $(F_M=0,J=I,\nabla=0)$ in
  $A$.  In particular, an ideal in a Lie algebra is an infinitesimal
  ideal system.
\end{example}

\begin{example}[The Bott connection]\label{foliation}
  Consider an involutive subbundle $F_M\subseteq TM$ and the Bott
  connection
\[\nabla^{F_M}\colon\Gamma(F_M)\times\Gamma(TM/F_M)\to\Gamma(TM/F_M)\]
associated to it. Then it is straightforward to check that the triple
$( F_M, F_M, \nabla^{F_M})$ is an infinitesimal ideal system in the
Lie algebroid $TM$.
\end{example}

\begin{example}[The ideal system associated to a fibration of Lie algebroids]\label{fibration}
Let \begin{equation*}
\begin{xy}
\xymatrix{
A\ar[r]^{\varphi}\ar[d]_{q_A}& A'\ar[d]^{q_{A'}}\\
M\ar[r]_f&M'
}
\end{xy}
\end{equation*}
be a fibration of Lie algebroids, i.e.~the map
$\varphi_0$ is a surjective submersion and $\varphi^!\colon A\to \varphi_0^!A'$ 
is a surjective vector bundle morphism over the identity on $A$.

Then $J:=\ker(\varphi)\subseteq A$ is a subalgebroid of $A$ and
$F_M=T^{\varphi_0}M\subseteq TM$ is an involutive subbundle.  The equality
$T\varphi_0\circ\rho=\rho'\circ\varphi$ yields immediately $\rho(J)\subseteq
F_M$.

Define a connection $\nabla^\varphi\colon \Gamma(F_M)\times
\Gamma(A/J)\to\Gamma(A/J)$
by setting $\nabla^\varphi_X\bar a=0$ for 
all sections $a\in \Gamma(A)$ that are $\varphi$-related to some
section $a'\in\Gamma(A')$, i.e.~such that $\varphi\circ a=a'\circ \varphi_0$.
Then the properties of the Lie algebroid morphism $(\varphi, \varphi_0)$ imply 
that $(F_M, J, \nabla^\varphi)$ is an infinitesimal ideal system in $A$.

Conversely, the following theorem shows that, up to topological
obstructions, a Lie algebroid can be ``quotiented out'' by an
infinitesimal ideal system \cite{JoOr14}, just as a Lie algebra modulo
an ideal gives a new Lie algebra.  More precisely let $(F_M,J,\nabla)$
be an infinitesimal ideal system in a Lie algebroid $A$. Assume that
$\bar M=M/F_M$ is a smooth manifold and that $\nabla$ has trivial
holonomy.  Then the quotient defined by parallel transport along the
leaves of $F_M$, $(A/J)/\nabla$, inherits a Lie algebroid structure
over $F/F_M$ such that the canonical projections
$\pi\colon A\to (A/J)/\nabla$ and $\pi_M\colon M\to M/F_M$ define a
fibration of Lie algebroids \cite{JoOr14}.

\end{example}

\subsection{Pontryagin classes associated to an infinitesimal ideal
  system}

First of all, since an infinitesimal ideal system consists among other
ingredients of an involutive subbundle $F_M\subseteq TM$ and a flat
$F_M$-connection on $A/J$,  the following proposition is immediate.
\begin{proposition}\label{prop_easy_ob1}
  Let $(F_M,J,\nabla)$ be an infinitesimal ideal system in a Lie
  algebroid $A\to M$. Let $q$ be the codimension of $F_M$ in $TM$.
  Then the Pontryagin algebras $\Pont^r(A/J)$ and $\Pont^r(TM/F_M)$
  are all trivial for $r>2q$.
\end{proposition}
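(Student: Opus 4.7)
The plan is to apply Theorem \ref{main} twice, taking the ambient Lie algebroid to be $TM$ and the subalgebroid to be $F_M$. The definition of an infinitesimal ideal system provides precisely the two flat representations that make this immediate.

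First I would unpack the relevant data: since $F_M\subseteq TM$ is involutive, it is a Lie subalgebroid of $TM$ of codimension $q$. Two representations of $F_M$ are now in play. On $A/J$, the datum $\nabla$ of the infinitesimal ideal system is by assumption a \emph{flat} $F_M$-connection. On $TM/F_M$, the Bott connection $\nabla^{F_M}\colon\Gamma(F_M)\times\Gamma(TM/F_M)\to\Gamma(TM/F_M)$ is flat by the standard argument used in Example \ref{foliation}. So $F_M$ acts as a subalgebroid of $TM$ of codimension $q$ on each of the two vector bundles $A/J$ and $TM/F_M$.

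Now I would apply Theorem \ref{main} with the ambient Lie algebroid $A$ in that statement replaced by $TM$, the subalgebroid $B$ replaced by $F_M$, and the vector bundle $E$ taken to be $A/J$ (respectively $TM/F_M$). The conclusion of Theorem \ref{main} in each case is that the $TM$-Pontryagin spaces $\Pont^r_{TM}(A/J)\subseteq H^r(M)$ and $\Pont^r_{TM}(TM/F_M)\subseteq H^r(M)$ vanish for $r>2q$. But the classical Pontryagin algebra of a vector bundle over $M$ is by convention the $TM$-Pontryagin algebra (and this coincides with $\rho^\star\Pont^\bullet$ under the anchor $\rho=\id_{TM}$, as in Remark \ref{imp_rem}), so $\Pont^r(A/J)=\Pont^r_{TM}(A/J)$ and $\Pont^r(TM/F_M)=\Pont^r_{TM}(TM/F_M)$. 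This delivers the desired vanishing.

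There is essentially no obstacle here; the only thing to check is that the definition of an infinitesimal ideal system really does yield a flat $F_M$-connection on $A/J$ (which is part of the definition) and that $F_M\subseteq TM$ has codimension $q$ in the sense of Theorem \ref{main} (which is just rank counting). The proposition is therefore a direct corollary of Theorem \ref{main}, and this is exactly the sense in which, as noted right after its statement, axioms (1)--(3) of an infinitesimal ideal system play no role in the argument; only the existence of the involutive $F_M$ and the flat connection $\nabla$ is used.
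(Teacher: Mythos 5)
Your proof is correct and follows exactly the route the paper intends: the proposition is stated there as an immediate consequence of Theorem \ref{main} applied with ambient algebroid $TM$ and subalgebroid $F_M$ of codimension $q$, acting on $A/J$ via the flat connection $\nabla$ from the infinitesimal ideal system and on $TM/F_M$ via the Bott connection. Your closing observation that only the involutivity of $F_M$ and the flatness of $\nabla$ are used matches the paper's own remark that axioms (1)--(3) play no role here.
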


Next, it is easy to see that $J$ is  a subalgebroid of $A$.
The Bott connection associated to $J\subseteq A$ is the flat
$J$-connection $\nabla^J$ on $A$ defined by
\[ \nabla^J\colon\Gamma(J)\times\Gamma(A/J)\to\Gamma(A/J),\quad \nabla^J_j\overline{a}=\overline{[j,a]}
\]
for $j\in\Gamma(J)$ and $a\in\Gamma(A)$. In addition, there is a flat
$J$-connection on $TM/F_M$, defined by
\[\nabla\colon\Gamma(J)\times\Gamma(TM/F_M)\to\Gamma(TM/F_M),\quad \nabla_j\overline{X}=\overline{[\rho(j),X]}
\]
for $j\in\Gamma(J)$ and $X\in\mx(M)$.
This, Proposition \ref{prop_easy_ob1} and Remark \ref{imp_rem} yield the following result.
\begin{proposition}\label{prop_easy_ob2}
  Let $(F_M,J,\nabla)$ be an infinitesimal ideal system in a Lie
  algebroid $A\to M$. Let $s$ be the codimension of $J$ in $A$.  Then
  the Pontryagin algebras $\Pont^r_A(A/J)$ and $\Pont^r_A(TM/F_M)$ are all trivial
  for $r>2\min\{s,q\}$.
\end{proposition}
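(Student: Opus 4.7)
The plan is to obtain the bound $r>2\min\{s,q\}$ by combining two independent vanishing arguments, one yielding $r>2s$ and the other $r>2q$. The $r>2q$ half is essentially immediate: Proposition \ref{prop_easy_ob1} already gives $\Pont^r(A/J)=0$ and $\Pont^r(TM/F_M)=0$ in $H^\bullet(M)$ for $r>2q$, and Remark \ref{imp_rem} tells us that $\Pont^\bullet_A(E)=\rho^\star\Pont^\bullet(E)$ for every vector bundle $E$ over $M$, so applying $\rho^\star$ transports these vanishings into $H^\bullet(A)$ without changing the degree bound.

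For the $r>2s$ half, I would invoke Theorem \ref{main} with the subalgebroid $J\subseteq A$, which has codimension $s$ by definition. To apply it I need to exhibit flat $J$-representations on both $A/J$ and $TM/F_M$; these are precisely the Bott-type connections
\[ \nabla^J_j\bar a=\overline{[j,a]}, \qquad \nabla_j\overline{X}=\overline{[\rho(j),X]}, \]
recalled just before the statement of the proposition. The first is well-defined because the first axiom of an infinitesimal ideal system forces $J$ to be a subalgebroid of $A$, and flatness is an immediate consequence of the Jacobi identity in $A$. For the second, well-definedness modulo $\Gamma(F_M)$ follows from $\rho(J)\subseteq F_M$ combined with the involutivity of $F_M$, while flatness reduces to the Jacobi identity in $\mx(M)$ once one uses the Lie algebroid morphism identity $\rho[j_1,j_2]=[\rho(j_1),\rho(j_2)]$. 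Theorem \ref{main} then delivers $\Pont^r_A(A/J)=0=\Pont^r_A(TM/F_M)$ for all $r>2s$.

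Combining the two halves yields the claimed vanishing in every degree $r>2\min\{s,q\}$. The only non-trivial step is the short bookkeeping verification that the two Bott $J$-connections above are well-defined and flat; everything else is a direct citation of previously established results. It is worth noting that axioms (2) and (3) of the infinitesimal ideal system definition play no role in the proof, which is consistent with the author's remark immediately afterwards that this proposition is rather unsatisfactory and motivates the refined obstruction stated in Theorem \ref{adv_ob_1}.
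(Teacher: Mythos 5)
Your proof is correct and follows essentially the same route as the paper: the $r>2q$ part via Proposition \ref{prop_easy_ob1}, Remark \ref{imp_rem} and $\rho^\star$, and the $r>2s$ part by applying Theorem \ref{main} to the two flat Bott-type $J$-connections on $A/J$ and $TM/F_M$, whose well-definedness uses exactly the facts you cite ($J$ a subalgebroid, $\rho(J)\subseteq F_M$, involutivity of $F_M$).
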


Of course, Propositions \ref{prop_easy_ob1} and \ref{prop_easy_ob2}
can be refined using Theorem \ref{with_atiyah} and the Atiyah classes
defined by extensions of the four flat connections.

\subsection{Finer obstructions}

The obstructions found above are too ``rough'' for being really
meaningful -- the proofs use very little of the structure of
infinitesimal ideal systems.  This section uses the Pontryagin
algebras of graded vector bundles in order to find further (finer!)
obstructions to the existence of infinitesimal ideal systems in a
given Lie algebroid.

In order to do this, let us recall some results found in
\cite{DrJoOr15}.  Let $A\to M$ be a Lie algebroid. Let
$F_M\subseteq TM$ be an involutive subbundle and let $J\subseteq A$ be
a smooth subbundle. Let
$\nabla\colon \Gamma(F_M)\times\Gamma(A/J)\to\Gamma(A/J)$ be a flat
connection, and let
$\tilde\nabla\colon\mx(M)\times\Gamma(A)\to\Gamma(A)$ be an extension
of $\nabla$. That is, $\tilde\nabla_Xj\in\Gamma(J)$ for all
$X\in\Gamma(F_M)$ and $j\in\Gamma(J)$ and the induced quotient
connection equals $\nabla$.  Recall from \S\ref{sec_adjoint} that
$\tilde\nabla$ defines the two basic connections
  \[ \tilde\nabla^{\rm bas}\colon\Gamma(A)\times\Gamma(A)\to
    \Gamma(A), \qquad \tilde\nabla^{\rm
      bas}\colon\Gamma(A)\times\mx(M)\to \mx(M)
  \]
  and the basic curvature
  $R_{\tilde\nabla}^{\rm bas}\in\Omega^2(A,\Hom(TM,A))$ -- that is,
  $\tilde\nabla$ defines the adjoint representation
  $\ad_{\tilde\nabla}$ as in \S\ref{sec_adjoint}.
  
  Then $(F_M,J,\nabla)$ is an infinitesimal ideal system in $A$
  if and only if \cite{DrJoOr15}:
  \begin{enumerate}
    \item $\rho(J)\subseteq F_M$;
  \item The basic connection 
    $\tilde\nabla^{\rm bas}\colon
    \Gamma(A)\times\Gamma(A)\to\Gamma(A)$ preserves $J$;
  \item The basic connection
    $\tilde\nabla^{\rm bas}\colon \Gamma(A)\times\mx(M)\to\mx(M)$
    preserves $F_M$;
  \item The basic curvature
    $R_{\tilde\nabla}^{\rm bas}\in\Omega^2(A,\operatorname{Hom}(TM,A))$ restricts to
    an element of \linebreak$\Omega^2(A,\operatorname{Hom}(F_M,J))$.
  \end{enumerate}
  That is, the adjoint $2$-representation
  $\ad_{\tilde\nabla}$ of $A$ on $A[0]\oplus TM[1]$ defined by the
  anchor and the basic connections and curvature restricts to a
  $2$-representation of $A$ on $J[0]\oplus F_M[1]$. Theorem
  \ref{main_2rep} yields immediately the following result.
\begin{theorem}\label{adv_ob_1}
  Let $(A\to M, \rho, [\cdot\,,\cdot])$ be a Lie algebroid.  Let
  $J\subseteq A$ and $F_M\subseteq TM$ be vector subbundles. If $F_M$
  is involutive and there is a flat $F_M$-connection on $A/J$ such
  that $(F_M,J,\nabla)$ is an infinitesimal ideal system, then
  \[ p^l_A(J)=p^l_A(F_M)\in H^{4l}(A)
  \]
  for all $l\geq 1$.
  \end{theorem}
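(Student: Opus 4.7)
The plan is to combine the characterization of infinitesimal ideal systems in terms of restrictions of adjoint $2$-representations (recalled from \cite{DrJoOr15} immediately above the theorem) with the obstruction provided by Theorem \ref{main_app} on $2$-representations of a Lie algebroid on a $2$-term graded vector bundle. All the substantive work has been done in these two prior results; the proof itself is purely a matter of assembling them.

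First I would fix any linear connection $\tilde\nabla\colon\mx(M)\times\Gamma(A)\to\Gamma(A)$ extending the flat $F_M$-connection $\nabla$ on $A/J$. Such an extension always exists: choose a complement of $F_M$ in $TM$ and a complement of $J$ in $A$, use $\nabla$ to define the $F_M$--$A/J$ component, and prescribe $\tilde\nabla$ arbitrarily on the remaining components. From $\tilde\nabla$ I would form the adjoint $2$-representation $\ad_{\tilde\nabla}$ of $A$ on $A[0]\oplus TM[1]$ exactly as recalled in \S\ref{sec_adjoint}, via the anchor $\rho$, the two basic connections $\tilde\nabla^{\rm bas}$, and the basic curvature $R^{\rm bas}_{\tilde\nabla}$.

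The key step is to invoke the characterization of infinitesimal ideal systems recalled just above the theorem: since $(F_M,J,\nabla)$ is an infinitesimal ideal system, the four conditions listed there hold for $\tilde\nabla$. Explicitly, $\rho(J)\subseteq F_M$, the two basic connections $\tilde\nabla^{\rm bas}$ preserve $J$ and $F_M$ respectively, and the basic curvature $R^{\rm bas}_{\tilde\nabla}$ takes values in $\Hom(F_M,J)$. Unpacking the explicit formulas for $\mathcal D_{\ad}(a)$ and $\mathcal D_{\ad}(X)$ from \S\ref{sec_adjoint}, this is exactly the statement that $\mathcal D_{\ad}$ maps $\Omega(A,J[0]\oplus F_M[1])_\bullet$ into itself. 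Therefore $\ad_{\tilde\nabla}$ restricts to an operator $\mathcal D$ on $\Omega(A,J[0]\oplus F_M[1])_\bullet$ which inherits from $\ad_{\tilde\nabla}$ the defining Leibniz rule and the identity $\mathcal D^2=0$, and hence is a genuine $2$-representation of $A$ on $J[0]\oplus F_M[1]$.

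Finally, applying Theorem \ref{main_app} to this restricted $2$-representation yields the desired equality $p^l_A(J)=p^l_A(F_M)$ in $H^{4l}(A)$ for all $l\geq 1$. There is no real obstacle to the argument: the hard analytical content is encoded in the characterization of \cite{DrJoOr15} on the one hand and in Theorem \ref{main_app} on the other. The only technical verification worth recording is that the restricted operator genuinely preserves the graded subbundle $J[0]\oplus F_M[1]$ inside $A[0]\oplus TM[1]$, but this is precisely what conditions (1)--(4) above guarantee.
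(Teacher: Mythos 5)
Your proposal is correct and follows exactly the route the paper takes: it invokes the characterization from \cite{DrJoOr15} of infinitesimal ideal systems as those triples for which the adjoint $2$-representation $\ad_{\tilde\nabla}$ restricts to a $2$-representation of $A$ on $J[0]\oplus F_M[1]$, and then applies Theorem \ref{main_app} to that restricted $2$-representation. The only difference is that you spell out the existence of the extension $\tilde\nabla$ and the invariance of $\Omega(A,J[0]\oplus F_M[1])_\bullet$ in slightly more detail than the paper, which simply states these as immediate.
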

  \begin{example}\label{3_ex_1}
    Example \ref{naive_ideal} and the last proposition show that if $I\subseteq A$ is an ideal, then
    $\Pont^\bullet_A(I)=\{0\}$.
    This is easy to see directly since $A$ is represented on $I$ by the Lie bracket.
    \medskip

    In the situation of Example \ref{foliation}, the statement of the
    last proposition is trivial since $J=F_M$.
However, Example \ref{fibration} and the last proposition show
    that if $\varphi\colon A\to A'$ 
is a fibration of Lie algebroids over a smooth submersion $f\colon M\to M'$, then
    \[ p^l_A(T^fM)=p^l_A(\ker\varphi)\in H^{4l}(A)
    \]
    for all $l\geq 1$.
  \end{example}

\appendix
\section{Proof of Proposition \ref{inv_conn}}\label{proof_inv_conn}

Let $\mathcal D$ and $\mathcal D'$ be two connections up to homotopy
of the Lie algebroid $A$ on a graded vector bundle
$\E=\oplus_{k\in\Z}E_z[z]$ of finite rank.
The difference $\mathcal D'-\mathcal D$ is
graded-$\Omega^\bullet(A)$-linear and there exists an element
$D\in\Omega(A,\underline{\End}(\E))_1$ such that
$ \mathcal D'-\mathcal D=\widehat{D}$.
For each $t\in[0,1]$ set $\mathcal D_t=\mathcal
D+t\widehat{D}$. Then $\mathcal D_t$ is a connection up to homotopy of
$A$ on $\E$ for all $t\in[0,1]$, with
$\mathcal D_0=\mathcal D$ and $\mathcal D_1= \mathcal D'$.
Its curvature at time $t$ reads
$ \widehat{R_{\mathcal D_t}}= \mathcal D_t^2=(\mathcal D+t\widehat{D})^2=\mathcal D^2+t\left[\mathcal D,\widehat{D}\right]+\frac{1}{2}t^2\left[\widehat D, \widehat D\right]$,
which leads to 
\begin{equation*}
  \frac{d}{dt}\widehat{R_{\mathcal D_t}}= \left[\mathcal D,\widehat{D}\right]+t\left[\widehat D, \widehat D\right]=\left[\mathcal D_t,\widehat{D}\right]
  \overset{\eqref{def_D_End}}{=}\widehat{\mathcal D_{t,\End}D},
\end{equation*}
and so to $\frac{d}{dt} R_{\mathcal D_t}=\mathcal D_{t,\End}D$.
Next, this implies
\begin{equation*}
  \frac{d}{dt} R_{\mathcal D_t}^i=\sum_{s=1}^iR_{\mathcal D_t}^{(s-1)}\wedge \mathcal D_{t,\End}D\wedge R_{\mathcal D_t}^{i-s}
\end{equation*}
and so
\begin{equation*}
  \begin{split}
    \frac{d}{dt}\widehat{\gtr}\left(R_{\mathcal
        D_t}^i\right)
    &\,\,=i\cdot \widehat{\gtr}\left(R_{\mathcal D_t}^{i-1}\wedge \mathcal D_{t,\End}D\right)
    \overset{\eqref{bianchi}}{=}i\cdot \widehat{\gtr}\left(\mathcal D_{t,\End}\left(R_{\mathcal D_t}^{i-1}\wedge D\right)\right)\\
    &\overset{\eqref{trace_da}}{=}i\cdot\dr_A\left(\widehat{\gtr}\left(R_{\mathcal
          D_t}^{i-1}\wedge D\right)\right).
  \end{split}
\end{equation*}
Using this, conclude that
\[ \widehat{\gtr}\left(R_{\mathcal
        D}^i\right)- \widehat{\gtr}\left(R_{\mathcal
        D'}^i\right)=\dr_A\int_0^1i\cdot\left(\widehat{\gtr}\left(R_{\mathcal
          D_t}^{i-1}\wedge D\right)\right)dt,
  \]
 and so $\widehat{\gtr}\left(R_{\mathcal
        D}^i\right)$ and $\widehat{\gtr}\left(R_{\mathcal
        D'}^i\right)$ define the same cohomology class in $H^{2i}(A)$.

\def\cprime{$'$} \def\polhk#1{\setbox0=\hbox{#1}{\ooalign{\hidewidth
  \lower1.5ex\hbox{`}\hidewidth\crcr\unhbox0}}} \def\cprime{$'$}
  \def\cprime{$'$}

\end{document}